\documentclass[11pt]{amsart}

\usepackage{verbatim}
\usepackage{amssymb, amsmath, amsthm}
\usepackage{hyperref}
\usepackage[alphabetic,lite]{amsrefs}
\usepackage{tikz}
\usetikzlibrary{matrix,arrows,decorations.pathmorphing}
\usepackage[all]{xy}
\usepackage{tikz-cd}


\newtheorem{lemma}{Lemma}[section]
\newtheorem{theorem}[lemma]{Theorem}

\newtheorem{prop}[lemma]{Proposition}
\newtheorem{cor}[lemma]{Corollary}

\newtheorem{claim*}{Claim}
\newtheorem{thm}[lemma]{Theorem}
\newtheorem*{thm*}{Theorem}

\theoremstyle{definition}
\newtheorem{defn}[lemma]{Definition}

\newtheorem{conj}[lemma]{Conjecture}

\newtheorem{question}[lemma]{Question}

\theoremstyle{remark}
\newtheorem{rmk}[lemma]{Remark}

\DeclareMathOperator{\Kum}{Kum}

\newcommand{\A}{{\bf A}}
\newcommand{\G}{{\bf G}}

\newcommand{\PP}{{\bf P}}

\newcommand{\F}{{\bf F}}
\newcommand{\Q}{{\bf Q}}

\newcommand{\Z}{{\bf Z}}

\newcommand{\Xbar}{{\overline{X}}}

\newcommand{\kbar}{{\overline{k}}}

\newcommand{\Ybar}{{\overline{Y}}}


\newcommand{\calM}{{\mathcal M}}

\newcommand{\calO}{{\mathcal O}}

\newcommand{\calV}{{\mathcal V}}

\newcommand{\calX}{{\mathcal X}}


\newcommand{\frakF}{{\mathfrak F}}

\usepackage{mathrsfs} 
\newcommand{\scrA}{{\mathscr A}}

\newcommand{\scrK}{{\mathscr K}}

\newcommand{\scrR}{{\mathscr R}}


\DeclareMathOperator{\inv}{inv}

\DeclareMathOperator{\im}{im}

\DeclareMathOperator{\Hom}{Hom}

\DeclareMathOperator{\Aut}{Aut}
\DeclareMathOperator{\Gal}{{Gal}}

\DeclareMathOperator{\Cor}{Cor}
\DeclareMathOperator{\Res}{Res}

\DeclareMathOperator{\Br}{{Br}}

\DeclareMathOperator{\Pic}{Pic}

\DeclareMathOperator{\Spec}{{Spec}}

\DeclareMathOperator{\et}{\textrm{\normalfont \'et}}

\DeclareMathOperator{\N}{N}

\DeclareMathOperator{\GL}{GL}

\DeclareMathOperator{\NS}{NS}

\DeclareMathOperator{\pr}{pr}


\numberwithin{equation}{section}
\numberwithin{table}{section}

\newcommand{\defi}[1]{\textsf{#1}} 

\newcommand{\CH}{\textrm{\normalfont CH}}

\usepackage{geometry} 
 \geometry{margin=2.4cm}

\begin{document}
\title[Arithmetic of rational points and zero-cycles on products]{Arithmetic of rational points and zero-cycles on products of Kummer varieties and K3 surfaces}

\author{Francesca Balestrieri}
\address{Francesca Balestrieri\\ 
Max-Planck-Institut f{$\ddot{\textrm{u}}$}r Mathematik\\
Vivatsgasse 7\\
 53111 Bonn\\
 Germany.}
 \email{fbales@mpim-bonn.mpg.de}

\author{Rachel Newton}
\address{Rachel Newton\\
Department of Mathematics and Statistics\\ 
University of Reading\\
Whiteknights\\
PO Box 220\\
Reading RG6 6AX\\ 
UK.}
\email{r.d.newton@reading.ac.uk}

\thanks{MSC 2010: 11G35 (primary), 14G05, 14G25 (secondary). \emph{Keywords}: zero-cycle, Brauer-Manin obstruction, Hasse principle, weak approximation, Kummer variety, K3 surface.}

\begin{abstract}
Let $k$ be a number field. In the spirit of a result by Yongqi Liang, we relate the arithmetic of rational points over finite extensions of $k$ to that of zero-cycles over $k$ for Kummer varieties over $k$.  For example, for any Kummer variety $X$ over $k$, we show that if the Brauer-Manin obstruction is the only obstruction to the Hasse principle for rational points on $X$ over all finite extensions of $k$, then the ($2$-primary) Brauer-Manin obstruction is the only obstruction to the Hasse principle for zero-cycles of any given odd degree on $X$ over $k$. 
We also obtain similar results for products of Kummer varieties, K3 surfaces and rationally connected varieties.

\end{abstract}

\maketitle

\section{Introduction}

Let $X$ be a smooth, proper, geometrically integral variety over a number field $k$ with a fixed choice of algebraic closure $\kbar$, let $\overline{X}=X\times_{\Spec k} \Spec\overline{k}$,
and let $\Br X:= \mathrm{H}^2_{\et}(X, \G_m)$ be the Brauer group of $X$.  In \cite{Manin}, Manin described a pairing \mbox{\(\langle \ , \  \rangle_{\textrm{BM}} : X(\A_k) \times \Br X \to \Q/\Z\)} such that the set of rational points $X(k)$ is contained in the subset $X(\A_k)^{\Br X}$ of adelic points that are orthogonal to $\Br X$ under the pairing. It can (and sometimes does) happen that $X(\A_k)^{\Br X} = \emptyset $ while $X(\A_k) \neq \emptyset$, yielding an obstruction to the Hasse principle on $X$. This obstruction is known as the Brauer-Manin obstruction (for more details, we refer the reader to e.g. \cite{Skorobogatov-Torsors}). Variations of the Brauer-Manin obstruction can be obtained by considering subsets $B \subset \Br X$ instead of the full Brauer group. For example, one can use the algebraic Brauer group $\Br_1 X:= \ker( \Br X \to \Br \Xbar)$, or the $d$-primary torsion subgroup $\Br X \{d\}$.
Although the Brauer-Manin obstruction may not always suffice to explain the failure of the Hasse principle (see e.g. \cite{Skorobogatov-1999}, \cite{Poonen-2010}), it is nonetheless known to be sufficient for some classes of varieties and is conjectured to be sufficient for geometrically rationally connected varieties (see \cite{CT03}) and K3 surfaces (see \cite{Skorobogatov-K3Conj}). For a survey of recent progress and open problems 
in the field of rational points and obstructions to their existence, we refer the reader to \cite{Wittenberg}.

In general, it is interesting to also consider 0-cycles on varieties, rather than just rational points. Recall that a 0-cycle on $X$ is a formal sum of closed points of $X$, $z=\sum n_i P_i$, with degree $\deg(z)=\sum n_i [k(P_i):k]$. A rational point of $X$ over $k$ is thus a 0-cycle of degree $1$. Conjecturally, the Brauer-Manin obstruction to the Hasse principle for 0-cycles of degree $1$ is the only one for smooth, proper, geometrically integral varieties over $k$ (see \cite{KS86}, \cite{CT95}). In this paper,  we focus on the relationship between the arithmetic of the rational points of a variety $X$ over extension fields and the arithmetic of 0-cycles on $X$. We consider the set $\overline{\scrK}_{k,g}$ of smooth, projective, geometrically integral varieties over $k$ of dimension $g \geq 2$ which become Kummer varieties upon base change to $\overline{k}$ and its subset  $\scrK_{k,g} \subset \overline{\scrK}_{k,g}$ of Kummer varieties over $k$. We let  $  \overline{\scrK}_{k}:= \cup_{g \geq 2}    \overline{\scrK}_{k,g}$ and  $ {\scrK}_{k}:= \cup_{g \geq 2}    {\scrK}_{k,g}$.

\begin{thm}\label{thm:Kummer2-primary} Let $k$ be a number field. 
Let $W:=\prod_{i=1}^r X_i$, where  $X_i \in \scrK_k$ for all $i\in\{1,\dots,r\}$, and let $\delta$ be an odd integer. Suppose that, for all $i\in\{1,\dots,r\}$ and all finite extensions $l/k$, the Brauer-Manin obstruction is the only obstruction to the Hasse principle for rational points on $X_i$ over $l$.
 Then the $2$-primary Brauer-Manin obstruction is the only obstruction to the Hasse principle for 0-cycles of degree $\delta$ on $W$.  
\end{thm}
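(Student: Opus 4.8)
The plan is to follow Liang's strategy for transferring information about rational points over finite extensions into information about $0$-cycles over the ground field. Unwinding the conclusion, what must be shown is: if $(z_v)_v$ is a family of local $0$-cycles on $W$, each of degree $\delta$, which is orthogonal to $\Br W\{2\}$ under the Brauer--Manin pairing for $0$-cycles (that is, $\sum_v \langle z_v, b\rangle_v = 0$ for every $b \in \Br W\{2\}$), then there is a global $0$-cycle of degree $\delta$ on $W$.

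First I would dispatch the standard preliminaries. Writing $\Br_0 W := \im(\Br k \to \Br W)$, the group $\Br W/\Br_0 W$ is finite: the algebraic quotient $\Br_1 W/\Br_0 W \cong \HH^1(k, \Pic\Wbar)$ is finite because $\Pic\Wbar$ is finitely generated (each $X_i$ is a Kummer variety, so $\Wbar$ is a product of Kummer varieties), and the transcendental quotient embeds into $(\Br\Wbar)^{\Gal(\kbar/k)}$, which is finite by the finiteness results for Brauer groups of Kummer and abelian-type varieties building on Skorobogatov--Zarhin. Hence $\Br W\{2\}$ is generated modulo $\Br_0 W$ by finitely many classes $b_1,\dots,b_n$, and one can fix a finite set $S$ of places of $k$, containing the archimedean ones, such that for $v\notin S$ the local pairing with each $b_j$ vanishes identically on $0$-cycles of $W_{k_v}$. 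Then, by the moving lemma for $0$-cycles together with a formal-lemma argument, one reduces to the situation in which, for $v \in S$, the $z_v$ are separable $0$-cycles in general position whose supports project to well-behaved closed points of each factor $X_i$, while preserving degree and orthogonality to the $b_j$.

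The heart of the matter is then a Liang-type construction that manufactures a global $0$-cycle from this local data by routing through rational points on the $X_i$ over a common finite extension. The feature that makes the product tractable is that a closed point of $W=\prod_i X_i$ with residue field $l$ is exactly an $r$-tuple of $l$-points, one on each $X_i$, so that one works over a single extension $l$ and applies the hypothesis to each factor over $l$. Concretely, I would aim to produce a number field $l/k$ of \emph{odd} degree — this is where oddness of $\delta$ is used, both via extracting a closed point of odd residue degree from an odd-degree $0$-cycle and via the fact that corestriction from $l$ down to $k$ is then multiplication by an odd integer, so that orthogonality to $\Br W\{2\}$ over $k$ propagates to orthogonality to the full Brauer group $\Br X_{i,l}$ over $l$, the prime-to-$2$ part being automatically under control — together with, for each $i$, an adelic point of $X_{i,l}$ orthogonal to $\Br X_{i,l}$, assembled from the local components of the projections of the $z_v$ and suitable auxiliary local $0$-cycles. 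The hypothesis, applied to each $X_i$ over $l$, then yields $X_i(l)\neq\emptyset$ for all $i$, hence an $l$-point of $W$; fed back into Liang's construction — which slices $W$ by well-chosen curves and is what produced $l$ in the first place — this gives a global $0$-cycle on $W$ of degree exactly $\delta$.

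The main obstacle is precisely this construction: producing the common odd-degree extension $l$ and the compatible, Brauer-orthogonal adelic data on every $X_{i,l}$ out of the single family $(z_v)$ on $W$, with enough control on $[l:k]$ for the degree of the output $0$-cycle to come out equal to $\delta$ and for the restriction--corestriction bookkeeping to close up. This is where Liang's curve-slicing machinery, adapted to the product $W=\prod_i X_i$ (for $r>1$ perhaps via an induction on the number of factors using a fibration $W\to X_r$ with fibre $\prod_{i<r}X_i$), does the real work, and it is here that the restriction to the $2$-primary Brauer group and to odd-degree $0$-cycles becomes essential rather than incidental.
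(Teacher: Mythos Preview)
Your outline captures the Liang strategy correctly, and the scaffolding (finiteness of $\Br W/\Br_0 W$, moving lemma, reduction to finitely many places) is fine. But there is a real gap at the point you identify as the heart of the matter.

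You write that ``corestriction from $l$ down to $k$ is then multiplication by an odd integer, so that orthogonality to $\Br W\{2\}$ over $k$ propagates to orthogonality to the full Brauer group $\Br X_{i,l}$ over $l$''. Two things are wrong here. First, you do not get orthogonality to all of $\Br X_{i,l}$; at best you can hope for orthogonality to $\Br X_{i,l}\{2\}$. The paper then invokes the theorem of Creutz--Viray that, for a Kummer variety, the $2$-primary Brauer--Manin set equals the full Brauer--Manin set, which is what allows the hypothesis (phrased for the full Brauer group) to be applied. Your phrase ``the prime-to-$2$ part being automatically under control'' is standing in for a genuine theorem, not an observation.

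Second, and more seriously, even the $2$-primary step does not follow from $[l:k]$ odd alone. The identity $\Cor_{l/k}\circ\Res_{l/k}=[l:k]$ gives \emph{injectivity} of $\Res_{l/k}$ on $(\Br X_i/\Br_0 X_i)\{2\}$, but what you need is \emph{surjectivity}: every class in $(\Br X_{i,l}/\Br_0 X_{i,l})\{2\}$ must come from $k$, since otherwise the adelic point $(\mathcal{M}_w)_w$ produced by Liang's construction is only known to be orthogonal to $\Res_{l/k}(\Br X_i\{2\})$, not to $\Br X_{i,l}\{2\}$. Surjectivity can genuinely fail for odd-degree extensions, because $(\Br\Xbar)^{\Gamma_l}$ can be strictly larger than $(\Br\Xbar)^{\Gamma_k}$. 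The paper's Sections~4--5 are devoted precisely to this: one shows that $\Res_{l/k}$ is an isomorphism on $(\Br X_i/\Br_0 X_i)\{2\}$ provided $l/k$ has odd degree \emph{and} is linearly disjoint from a fixed finite extension $F_{X_i}/k$ (over which $\Pic\Xbar_i$ is Galois-trivial and $\Gamma_{F_{X_i}}$ acts trivially on $\Br\Xbar_i[2]$). Liang's slicing via $W\times\PP^1_k\to\PP^1_k$ is then used to arrange that the field $l=k(\theta)$ it outputs satisfies both constraints simultaneously. Without this linear-disjointness control, the transfer of Brauer-orthogonality breaks and the argument does not close.
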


In \cite{Skorobogatov-K3Conj}, Skorobogatov conjectured that the Brauer-Manin obstruction is the only obstruction to the Hasse principle for rational points on K3 surfaces over number fields. This leads to the following conditional result for Kummer surfaces:

\begin{cor}\label{cor:conditional}
Let $\delta$ be an odd integer. Then
Skorobogatov's conjecture that the Brauer-Manin obstruction is the only obstruction to the Hasse principle for rational points on K3 surfaces over a number field $k$ implies that the 2-primary Brauer-Manin obstruction to the Hasse principle is the only one for 0-cycles of degree $\delta$ on products of Kummer surfaces over $k$.
\end{cor}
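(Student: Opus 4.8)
The plan is to obtain Corollary \ref{cor:conditional} as an immediate consequence of Theorem \ref{thm:Kummer2-primary}, the only work being to check that Skorobogatov's conjecture supplies the hypothesis of that theorem in the case of Kummer surfaces. So fix an odd integer $\delta$, let $X_1,\dots,X_r$ be Kummer surfaces over $k$, and set $W=\prod_{i=1}^r X_i$. Each $X_i$ is a smooth projective geometrically integral surface over $k$ whose base change to $\kbar$ is a Kummer surface, so $X_i\in\scrK_{k,2}\subset\scrK_k$, and hence $W$ is of the shape to which Theorem \ref{thm:Kummer2-primary} applies.

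Next I would verify the hypothesis of Theorem \ref{thm:Kummer2-primary}: for every $i$ and every finite extension $l/k$, the Brauer-Manin obstruction is the only obstruction to the Hasse principle for rational points on $X_i$ over $l$. The point is that a Kummer surface is a K3 surface, and that this remains true after base change, so $X_i\times_k l$ is a K3 surface over the number field $l$. Invoking Skorobogatov's conjecture over $l$ then gives exactly the required statement for $X_i$ over $l$. With the hypothesis verified, Theorem \ref{thm:Kummer2-primary} yields that the $2$-primary Brauer-Manin obstruction is the only obstruction to the Hasse principle for $0$-cycles of degree $\delta$ on $W$, which is the assertion of the corollary.

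The only point requiring care --- and the nearest thing to an obstacle --- is the generality in which Skorobogatov's conjecture must be used. Although the corollary phrases it ``over a number field $k$'', what Theorem \ref{thm:Kummer2-primary} actually consumes is the conjecture over \emph{every} finite extension $l$ of $k$; since each such $l$ is again a number field, this is still just Skorobogatov's conjecture, applied over $l$ rather than over $k$. Beyond that one should simply record the standard facts that Kummer surfaces are (smooth, proper, geometrically integral) K3 surfaces and that the class $\scrK_{k,2}$ is stable under base change along finite field extensions, so that the varieties to which the theorem is applied genuinely lie in $\scrK_l$ for each $l$.
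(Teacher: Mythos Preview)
Your proof is correct and is essentially the same as the paper's. The paper phrases its proof as ``Run the proof of Theorem~\ref{thm:mixed} for $d=2$, taking into account Remark~\ref{rem2}'', but since Theorem~\ref{thm:Kummer2-primary} is itself proved as an immediate consequence of \cite[Thm.~1.7]{CreutzViray-DegBMO} together with the proof of Theorem~\ref{thm:mixed} at $d=2$, your direct invocation of Theorem~\ref{thm:Kummer2-primary} amounts to the same argument, packaged slightly more cleanly.
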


\begin{rmk}Note that we use the phrase `Kummer variety' in a slightly more general way than it is used in some texts. See Section~\ref{sec:Kummer} for the definition of a Kummer variety; a Kummer surface is a Kummer variety of dimension $2$. Note that Kummer surfaces are K3 surfaces (see \cite[p.2]{SZ-KumVar}).
\end{rmk}

This paper was inspired by Liang's work in \cite[Thm. 3.2.1]{Liang}, in which he takes $X$ to be a geometrically rationally connected variety over a number field $k$. He shows that if the Brauer-Manin obstruction is the only obstruction to the Hasse principle for rational points on $X$ over every finite extension of $k$, then the Brauer-Manin obstruction is the only obstruction to the Hasse principle for 0-cycles of degree 1 on $X$ over $k$.
See also \cite[Prop. 3.4.1]{Liang} for a result about 0-cycles of any degree. Liang also proved the analogue for weak approximation, in the sense defined in Definition~\ref{defn:BrWA}. Our adaptation of Liang's strategy to the case of (geometric) Kummer varieties also yields an analogous result for this notion of weak approximation:

\begin{theorem}\label{thm:main_intro_WA}
 Let $W:=\prod_{i=1}^r X_i$, where $X_i \in \overline{\scrK}_{k}$ for all $i\in\{1,\dots,r\}$, and  let $d\in\Z_{>0}$ and $\delta \in \Z$ with $\gcd(d,\delta)=1$. 
Suppose that, for all $i\in\{1,\dots,r\}$ and all finite extensions $l/k$, the $d$-primary Brauer-Manin obstruction is the only obstruction to weak approximation for rational points on $X_i$ over $l$.
 Then the $d$-primary Brauer-Manin obstruction is the only obstruction to weak approximation for 0-cycles of degree $\delta$ on $W$. 
\end{theorem}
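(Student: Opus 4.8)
The plan is to adapt Yongqi Liang's strategy from \cite[\S3]{Liang}, which trades a statement about $0$-cycles over $k$ for the assumed statement about rational points over finite extensions of $k$. Unwinding Definition~\ref{defn:BrWA}, it suffices to show the following: given a finite set $S$ of places of $k$, an integer $N\geq 1$, and a family $(z_v)_v$ of local $0$-cycles of degree $\delta$ on $W$ which is orthogonal to $\Br W\{d\}$ under the Brauer--Manin pairing on $0$-cycles, there is a global $0$-cycle $z$ of degree $\delta$ on $W$ which is congruent to $z_v$ modulo $N$ (in the relevant local Chow group) for every $v\in S$. As in \cite{Liang}, one reduces by linearity of the pairing to the closed points occurring in the supports, and one is free to enlarge $S$.

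The heart of the argument is a \emph{splitting} step. Using a moving lemma for $0$-cycles on the smooth projective variety $W$, one replaces the given family by a rationally equivalent one whose support consists of closed points $\theta_1,\dots,\theta_m$ in general position (e.g. avoiding a proper closed subset determined by the finitely many Brauer classes that matter) and which lie $v$-adically close, for all $v\in S$, to the local $0$-cycles $z_v$; in particular each base change $W_{l_j}$, where $l_j:=k(\theta_j)$, acquires local points at all places. Since $\overline{\scrK}$ is stable under field extension (Section~\ref{sec:Kummer}), $(X_i)_{l_j}\in\overline{\scrK}_{l_j}$ for all $i,j$. A restriction--corestriction argument, applied to the orthogonality of $(z_v)_v$ to $\Br W\{d\}$, then produces an adelic point of $W_{l_j}$ orthogonal to $\Br W_{l_j}\{d\}$ and compatible, at the places of $l_j$ above $S$, with the chosen local conditions. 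Projecting along $\pr_i\colon W_{l_j}\to (X_i)_{l_j}$ gives an adelic point of $(X_i)_{l_j}$ orthogonal to $\Br (X_i)_{l_j}\{d\}$, so the standing hypothesis (for weak approximation, over $l_j$) yields $x_{i,j}\in X_i(l_j)$ approximating the prescribed local data; hence $(x_{1,j},\dots,x_{r,j})\in W(l_j)$, i.e. a closed point of $W$ of degree $[l_j:k]$ lying $v$-adically close to $z_v$ for $v\in S$.

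It remains to reassemble. Since the degrees $[k(\theta_j):k]$ of the closed points in the original support of $(z_v)_v$ have greatest common divisor dividing $\delta$, a Bézout combination $z=\sum_j n_j\,(x_{1,j},\dots,x_{r,j})$, corrected if necessary by an auxiliary effective $0$-cycle to fix the approximation at $S$ and invoking $\gcd(d,\delta)=1$ where the $d$-primary bookkeeping requires it, has degree exactly $\delta$ and is congruent to $z_v$ modulo $N$ at every $v\in S$. As every step is performed $d$-primarily, this $z$ witnesses that the $d$-primary Brauer--Manin obstruction is the only obstruction to weak approximation for $0$-cycles of degree $\delta$ on $W$.

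I expect the main obstacle to be the restriction--corestriction step: one must transport orthogonality to the $d$-primary Brauer group from $k$ up to each $l_j$ while simultaneously preserving the $v$-adic approximation, which needs the $d$-primary parts of $\Br W$ and of the $\Br W_{l_j}$ to be sufficiently well controlled --- finiteness modulo constants and the structure of the transcendental part for (geometrically) Kummer varieties, treated in Section~\ref{sec:Kummer}, is what makes this feasible and explains why the statement is phrased for $W$ a product of members of $\overline{\scrK}_k$. Arranging the moving lemma to be compatible with the archimedean and $d$-adic local conditions is a further technical point, but one that already arises in \cite{Liang} and can be dealt with in the same manner.
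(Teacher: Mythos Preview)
Your sketch correctly identifies Liang's strategy as the template, but it omits the step that makes the whole machine run: the fibration $W\times\PP^1_k\to\PP^1_k$. In the paper (following \cite{Liang}), after modifying the local $0$-cycles to be effective of a common degree $\Delta\equiv\delta\pmod{dmn\,\delta_P}$, one uses the moving lemma so that their pushforwards to $\PP^1$ are separable, and then invokes a Hilbert-irreducibility-type result (\cite[Prop.~3.3.3]{Liang}) to find a \emph{single} closed point $\theta\in\PP^1_k$ whose residue field $k(\theta)$ has degree exactly $\Delta$ and is linearly disjoint from a prescribed finite extension $F/k$. The fibre over $\theta$ is $W_{k(\theta)}$, and the local data assemble into an adelic point on it orthogonal to $\Res_{k(\theta)/k}(\Br W\{d\})$. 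The whole point is that one gets to \emph{choose} the field extension.

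Your proposal instead takes closed points $\theta_j$ (apparently on $W$, though the text conflates local and global supports) with residue fields $l_j$ over which you have no control. But the step you yourself flag as ``the main obstacle'' --- upgrading orthogonality to $\Res_{l_j/k}(\Br W\{d\})$ to orthogonality to $\Br W_{l_j}\{d\}$ --- requires exactly the control provided by Theorem~\ref{thmisoNU} (packaged as Lemma~\ref{lem:nogrowth}\eqref{Kum}): one needs $[l_j:k]$ coprime to $d$ and $l_j$ linearly disjoint from a field $F_{X_i}$ determined by the Galois action on $\Br\overline{X_i}[d]$ and $\Pic\overline{X_i}$. Without the $\PP^1$ trick there is no mechanism to arrange this, and in general $(\Br X_i/\Br_0 X_i)\{d\}\to(\Br X_{i,l_j}/\Br_0 X_{i,l_j})\{d\}$ need not be surjective, so the restriction--corestriction argument you invoke does not go through. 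This is also where $\gcd(d,\delta)=1$ enters concretely: it ensures $\Delta$ is coprime to $d$, so that $[k(\theta):k]=\Delta$ satisfies the first hypothesis of Theorem~\ref{thmisoNU}. (The relevant Brauer-group control is in Sections~\ref{sec:ControlAbVar}--\ref{sec:ControlKummer}, not Section~\ref{sec:Kummer}.)
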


Our key observation, which allows us to employ Liang's method, is that while it may not be possible to control the growth of the whole of $\Br X/\Br_0 X$ under an extension of the base field, in the case of a variety $X\in\overline{\scrK}_{k}$ the $d$-primary part of $\Br X/\Br_0 X$ is unchanged so long as the base field extension has degree coprime to $d$ and is linearly disjoint from a certain field extension depending on $X$. 
Control of the growth of the algebraic part $\Br_1 X/ \Br_0 X$ of the Brauer group is easily achieved. 
We remark that, since the geometric Brauer group of a geometrically rationally connected variety is finite, the growth of the whole of $\Br X/\Br_0 X$ under extensions of the base field is easily controlled in Liang's paper. In the case of Kummer varieties, however, $\Br \Xbar$ is infinite. This makes controlling the growth of the transcendental part of the Brauer group under extensions of the base field much more difficult. In order to achieve this, we use the close relationship between the Brauer group of a Kummer variety and the Brauer group of the underlying abelian variety. The exploitation of the rich structure of abelian varieties is a common theme underlying much of the recent rapid progress in the study of rational points on the related K3 surfaces and Kummer varieties (see e.g. \cite{IeronymouSkoroZarhin}, \cite{SZ-2012},  \cite{IeronymouSkoro},  \cite{Newton15},   \cite{HarpazSkoro}, \cite{VAV16},  \cite{Harpaz}, \cite{CreutzViray-DegBMO}, \cite{SZ-KumVar}). Our work in Section \ref{sec:ControlAbVar} on the transcendental part of the Brauer group of geometrically abelian varieties and geometrically Kummer varieties may be of independent interest.

After obtaining our results for Kummer varieties, we were made aware of a recent preprint of Ieronymou (\cite{Ieronymou}) in which he uses Liang's approach together with work of Orr and Skorobogatov in \cite{OrrSkoro} to prove analogues of Liang's results in \cite{Liang} for K3 surfaces. Since Kummer surfaces are both K3 surfaces and Kummer varieties of dimension $2$, there is an overlap between our work and that of Ieronymou (note that Ieronymou's results do not require that the degree of the 0-cycle is odd). We remark, however, that our results are not implied by those of Ieronymou, since we work with Kummer varieties of arbitrary dimension. Combining our results for Kummer varieties with Ieronymou's results for K3 surfaces and Liang's work on rationally connected varieties gives Theorem~\ref{thm:mixed} below, which is a generalisation of Theorem~\ref{thm:main_intro_WA} for products of these types of variety. Denote by $\overline{\scrR}_k$ the set of smooth, projective, geometrically integral, geometrically
rationally connected varieties over $k$. Denote by $\defi{K3}_k$ the set of K3 surfaces over $k$.

 \begin{theorem}\label{thm:mixed} 
Let $k$ be a number field. 
 Let $W:=\prod_{i=1}^r X_i$, where $X_i\in  \overline{\scrR}_k \cup \defi{K3}_k\cup  \overline{\scrK}_{k}$ for all $i\in\{1,\dots,r\}$. 
 Let $d\in\Z_{>0}$, let $\delta \in \Z$ and assume that $\gcd(d,\delta)=1$ if at least one of the $X_i$ is in $\overline{\scrK}_{k,g}$ with $g \geq 3$. For $X_i\in  \overline{\scrR}_k \cup \defi{K3}_k$, let $a_i=\#(\Br X_i/\Br_0 X_i)$ and let $c$ be the least common multiple of all the $a_i$ and $d$.
 Suppose that, for all finite extensions $l/k$, 
\begin{enumerate}
\item if $X_i\in  \overline{\scrR}_k \cup \defi{K3}_k$ then the Brauer-Manin obstruction is the only obstruction to the Hasse principle (respectively, to weak approximation) for rational points on $X_i$ over $l$;
\item if $X_i\in\overline{\scrK}_{k}$ then the $d$-primary Brauer-Manin obstruction is the only obstruction to the Hasse principle (respectively, to weak approximation) for rational points on $X_i$ over $l$.
\end{enumerate}

 Then the $c$-primary 
 Brauer-Manin obstruction is the only obstruction to the Hasse principle (respectively, to weak approximation)  for 0-cycles of degree $\delta$ on $W$. 
\end{theorem}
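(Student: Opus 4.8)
The plan is to reduce the mixed statement to the three ingredient theorems — Liang's result for rationally connected varieties, Ieronymou's result for K3 surfaces, and Theorem~\ref{thm:Kummer2-primary}/\ref{thm:main_intro_WA} for geometrically Kummer varieties — by following Liang's fibration-free method for products, which is the common engine behind all of these. First I would recall the key mechanism from Liang's paper: to produce a $0$-cycle of degree $\delta$ on $W$ that is Brauer--Manin--orthogonal (for the appropriate subgroup of $\Br W$), one works with each place $v$ separately, uses that $W(\A_k)^{\Br} \neq \emptyset$ to get a collection of local points, and then replaces the local data by a single global closed point over a well-chosen finite extension $l/k$ whose existence is guaranteed by the hypothesis that the Hasse principle (or weak approximation) with the relevant Brauer--Manin obstruction holds for the $X_i$ over \emph{every} finite extension. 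The degree of such a point can be controlled modulo any prescribed integer by the usual trick of combining points of two coprime degrees; this is where the hypothesis $\gcd(d,\delta)=1$ (when a high-dimensional Kummer factor is present) enters, exactly as in Theorem~\ref{thm:main_intro_WA}.

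The main new point to verify, relative to the single-factor theorems, is the behaviour of $\Br W/\Br_0 W$ under the base change $l/k$, since $W$ is a product. Here I would use: (i) for the algebraic part, a Künneth/Leray argument shows $\Br_1 W/\Br_0 W$ injects into a sum of contributions from the factors and from $\Pic$-pairings, and its growth is controlled by a finite extension as in the single-factor case; (ii) for the transcendental part, $\Br\Wbar$ maps to (a subquotient of) $\bigoplus_i \Br\Xbar_i$ up to the finite geometric Brauer groups of the rationally connected and K3 factors, so the ``bad primes'' that can appear are confined to those dividing some $a_i$ or $d$. This is precisely why the conclusion is phrased with the $c$-primary Brauer--Manin obstruction, $c = \operatorname{lcm}(a_1,\dots,d)$: choosing the auxiliary extension $l/k$ to have degree coprime to $c$ and linearly disjoint from the (finitely many) fields attached to the Kummer factors by Section~\ref{sec:ControlAbVar}, one gets that the $c$-primary part of $\Br W/\Br_0 W$ is unchanged by base change to $l$, which is exactly what Liang's argument needs to pull the local solubility over $l$ back down to an orthogonal $0$-cycle over $k$.

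Concretely, the steps in order would be: (1) reduce to the case where each factor satisfies its own ``rational points over all finite extensions'' hypothesis and restate what must be produced (a $0$-cycle of degree $\delta$ on $W$ orthogonal to $\Br W\{c\}$, approximating a given family of local $0$-cycles in the weak-approximation version); (2) establish the base-change stability of the $c$-primary part of $\Br W/\Br_0 W$ under extensions $l/k$ of degree prime to $c$ and suitably linearly disjoint, combining the finiteness of $\Br\Xbar_i$ for rationally connected and K3 factors with the abelian-variety input of Section~\ref{sec:ControlAbVar} for the Kummer factors, together with a Künneth analysis of the product; (3) run Liang's argument: given an element of $W(\A_k)^{\Br W\{c\}}$, for each of finitely many relevant places choose local points, spread them out using a Chebotarev-type choice of closed point of $\PP^1$ or of an auxiliary extension, and invoke the per-factor hypothesis over the resulting finite extension $l$ to get a global point of $X_{i,l}$ for each $i$, hence a closed point of $W$; (4) adjust degrees to land on $\delta$ using a second point of coprime degree, and check the Brauer--Manin orthogonality of the resulting $0$-cycle descends to $k$ using step (2). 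The main obstacle is step (2): keeping track of how the transcendental Brauer classes of the Kummer factors interact inside a product and ensuring the controlling field extensions can be chosen compatibly (coprime degree \emph{and} linear disjointness simultaneously for all factors), so that no new $c$-primary transcendental classes appear after base change.
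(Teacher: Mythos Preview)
Your overall plan (run Liang's machine on $W\times\PP^1$, pick a closed point $\theta$ of controlled degree, pass to $k(\theta)$, and invoke the per-factor hypothesis) is the right one, and matches the paper's argument. But your step~(2) is where you diverge from the paper, and it is both harder than necessary and not quite correct as stated.

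You propose to control the growth of $(\Br W/\Br_0 W)\{c\}$ under base change by a K\"unneth analysis of the product. Two problems. First, the description ``$\Br\Wbar$ maps to (a subquotient of) $\bigoplus_i\Br\Xbar_i$'' is not accurate: by \cite{SZ-prod} the geometric Brauer group of a product has genuine cross terms coming from $\Hom$ groups between N\'eron--Severi lattices and (torsion of) Picard varieties of the factors, and these do not obviously vanish when Kummer factors are present. So the base-change stability of the full $(\Br W/\Br_0 W)\{c\}$ is not an immediate consequence of the stability for each factor. Second, and more to the point, the paper never needs this. The key device you are missing is Lemma~\ref{brsplit}: by pure functoriality along the projections $W\to X_i$, one has
\[
W_{k(\theta)}(\A_{k(\theta)})^{\Br W\{c\}}\ \subset\ \prod_{i=1}^r X_{i,k(\theta)}(\A_{k(\theta)})^{\Br X_i\{c\}}.
\]
So once the adelic point $(\calM_w)_w$ on $W_{k(\theta)}$ is produced, you simply project to each factor and work with $\Br X_i$, never with $\Br W$. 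The base-change control (Lemma~\ref{lem:nogrowth}) is then applied factor by factor, which is exactly what Sections~\ref{sec:ControlAbVar}--\ref{sec:ControlKummer} provide.

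A related imprecision: you say one chooses $l/k$ of degree coprime to $c$. In fact the degree of $k(\theta)/k$ is $\Delta\equiv\delta\pmod{dmn\delta_P}$; what is needed is that $k(\theta)$ is linearly disjoint from a fixed compositum $F$ (this handles the $\overline{\scrR}_k$ and $\defi{K3}_k$ factors via Lemma~\ref{lem:nogrowth}\eqref{rat},\eqref{K3}) and that $\gcd(\Delta,d)=1$ (this handles the Kummer factors of dimension $\geq 3$ via Lemma~\ref{lem:nogrowth}\eqref{Kum}, and is why the hypothesis $\gcd(d,\delta)=1$ is imposed). The constant $c$ enters only to ensure that, for $X_i\in\overline{\scrR}_k\cup\defi{K3}_k$, the equality $(\Br X_i/\Br_0 X_i)\{c\}=\Br X_i/\Br_0 X_i$ holds, so that orthogonality to $\Br X_i\{c\}$ upgrades to orthogonality to all of $\Br X_{i,k(\theta)}$; it is not a coprimality condition on $[l:k]$.

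In short: drop your step~(2) and replace it by the one-line inclusion of Lemma~\ref{brsplit}. Then steps (3)--(4) go through exactly as you outline, with the growth control applied to each $X_i$ individually.
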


All our results thus far prove sufficiency of the Brauer-Manin obstruction for 0-cycles, asssuming the sufficiency of the Brauer-Manin obstruction for rational points over finite extensions of the base field. This leads one to ask in what circumstances it is enough to simply assume sufficiency of the Brauer-Manin obstruction to rational points over the base field. Our final results give some insight into when this may be possible.

\begin{thm}\label{thm:transfer}  Let $k$ be a number field and let $X$ be a smooth, projective, geometrically integral variety over $k$. Suppose that $X(\A_k) \neq \emptyset$ and let $[\alpha]  \in (\Br X/\Br_0 X)[2]$. 
Then there exist uncountably many finite extensions $l/k$ such that 
\[ X(\A_k)^\alpha = \emptyset \Longrightarrow  X(\A_l)^{\Res_{l/k}\alpha} = \emptyset.\]
\end{thm}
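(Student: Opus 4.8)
The plan is to produce the extension $l/k$ so that the class $\alpha$ survives restriction and, moreover, the emptiness of $X(\A_k)^\alpha$ propagates to $X(\A_l)^{\Res_{l/k}\alpha}$. Since $\alpha$ has order dividing $2$ in $\Br X/\Br_0 X$, I would first arrange that $l/k$ has odd degree: then the restriction–corestriction argument shows $\Res_{l/k}\colon (\Br X/\Br_0 X)[2]\to(\Br X_l/\Br_0 X_l)[2]$ is injective, so $\Res_{l/k}\alpha\ne 0$ in $\Br X_l/\Br_0 X_l$ (here I need $\Br_0 X\hookrightarrow\Br_0 X_l$, or at least that the relevant composition with corestriction is multiplication by $[l:k]$, which is a unit on $2$-torsion). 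So the odd-degree condition is the structural constraint on $l$.

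Next I would translate "$X(\A_k)^\alpha=\emptyset$" into a statement about local invariants: $X(\A_k)^\alpha=\emptyset$ means that for every adelic point $(x_v)\in X(\A_k)$ we have $\sum_v \inv_v\alpha(x_v)\ne 0$ in $\Q/\Z$, and since $\alpha$ is $2$-torsion this sum lies in $\tfrac12\Z/\Z$, i.e. equals $\tfrac12$ for every adelic point. Now for a place $w$ of $l$ above $v$, the local invariant satisfies $\inv_w(\Res_{l/k}\alpha(x_v)) = [l_w:k_v]\cdot\inv_v\alpha(x_v)$. The idea is to choose $l/k$ so that at each "bad" place $v$ (those where $\inv_v\alpha$ is nonconstant on $X(k_v)$, a finite set $S$ of places) the local degrees $[l_w:k_v]$ are all odd — this is possible, e.g. by taking $l$ to be a suitable odd-degree extension that is totally split, or has prescribed odd local behaviour, at the finitely many places of $S$ — while away from $S$ the invariant $\inv_v\alpha(x_v)$ is already constant (in fact $0$ for almost all $v$ by the standard properties of Brauer classes and adelic points), so multiplying by $[l_w:k_v]$ changes nothing there either, or at worst is handled since $\alpha$ is $2$-torsion and we only care about parity.

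With such an $l$, given any $(y_w)\in X(\A_l)$, I would pull back/compare with adelic points of $k$: using that at places in $S$ the local degrees are odd and the set of values $\{\inv_v\alpha(x):x\in X(k_v)\}$ is unchanged under multiplication by an odd integer, and at places outside $S$ the invariant is a fixed element killed by multiplication by $[l_w:k_v]$ modulo its order behaviour, one gets $\sum_w\inv_w(\Res_{l/k}\alpha(y_w)) = \sum_v\bigl(\sum_{w|v}[l_w:k_v]\bigr)\inv_v\alpha(\cdot)$, and the inner sums are odd at the places that matter, so the total sum is again $\tfrac12\ne 0$. Hence $X(\A_l)^{\Res_{l/k}\alpha}=\emptyset$. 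Finally, to get uncountably many such $l$: there are infinitely many places of $k$ outside $S$, and one can impose infinitely many independent odd-degree splitting conditions (or use towers/linear disjointness as in the discussion preceding Theorem~\ref{thm:Kummer2-primary}), producing uncountably many pairwise distinct extensions $l/k$ all satisfying the required local constraints.

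The main obstacle I expect is the bookkeeping at the places outside $S$: one must ensure that multiplying the local invariants by the (possibly even) local degrees there does not accidentally move the global sum off $\tfrac12$. The clean way around this is to exploit that $\alpha$ is $2$-torsion so each $\inv_v\alpha(x_v)\in\{0,\tfrac12\}$, and to choose $l$ with \emph{odd} local degree at \emph{every} place $v$ where $\inv_v\alpha$ is not identically $0$ on $X(k_v)$ (a finite set, since for almost all $v$ the evaluation is trivial); then the parity of the global sum is preserved verbatim. Verifying that such odd-degree $l$ with prescribed odd local degrees at a finite set of places exist in uncountable supply is a standard application of weak approximation / Krasner-type arguments on number fields, and that is the only genuinely number-theoretic input.
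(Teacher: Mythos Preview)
Your overall plan --- odd-degree $l$, control at a finite set $S$ of bad places, parity argument on $\tfrac12\Z/\Z$-valued invariants --- matches the paper's. But the central computational step has a genuine gap. You write
\[\sum_w\inv_w\bigl(\Res_{l/k}\alpha(y_w)\bigr) = \sum_v\Bigl(\sum_{w\mid v}[l_w:k_v]\Bigr)\inv_v\alpha(\cdot),\]
and this is where the argument breaks. The identity $\inv_w\bigl(\Res_{l_w/k_v}\alpha(x)\bigr) = [l_w:k_v]\cdot\inv_v\alpha(x)$ holds only for points $x$ coming from $X(k_v)$; for an arbitrary $y_w\in X(l_w)$ there is simply no ``$\inv_v\alpha(\cdot)$'' to write down, since $y_w$ need not descend to a $k_v$-point. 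Requiring merely odd local degrees at $S$ does not repair this: even if $\inv_v\alpha$ happens to be constant on $X(k_v)$, the evaluation $\inv_w\alpha_{l_w}$ can take both values on the strictly larger set $X(l_w)$, and then one can choose $(y_w)_w$ to make the global sum vanish, so $X(\A_l)^{\alpha_l}$ would be nonempty.

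The paper avoids this by insisting that $l$ be \emph{completely split} at every place of $S$ (you mention this possibility but then write formulas for the general odd-local-degree case). With $l_w = k_v$ for $v\in S$, local $l$-points literally are $k_v$-points and the comparison becomes legitimate. The paper then argues the contrapositive: given $(x_w)_w\in X(\A_l)^{\alpha_l}$, it uses the identifications $X(l_w)=X(k_v)$ for $v\in S$ together with a parity/pairing argument (the places $v$ with partial sum $A_v=\tfrac12$ come in pairs) to build a single point of $X(\A_k)^\alpha$. Your direct implication can also be salvaged in the completely split case, but it then requires an observation you do not make: the hypothesis $X(\A_k)^\alpha=\emptyset$ forces $\inv_v\alpha$ to be \emph{constant} on $X(k_v)$ for every $v$ (else toggling one coordinate would land in $X(\A_k)^\alpha$), and only then does the $l$-adelic sum collapse to $[l:k]\cdot\tfrac12 = \tfrac12$.
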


We have the following corollary for Kummer varieties:

\begin{cor}\label{thm:liftsuff}
Let $X$ be a Kummer variety over a number field $k$. Suppose that  $(\Br X/\Br_0 X)\{2\} \cong\Z/2\Z$ and that $X(\A_k) \neq \emptyset$. Then there exist uncountably many finite extensions $l/k$  such that 
\[ X(\A_k)^{\Br X} = \emptyset \Longrightarrow  X(\A_l)^{\Br X_l} = \emptyset.\]
In particular, if the Brauer-Manin obstruction to the Hasse principle  is the only one for $k$-rational points, then it is also the only one  for $l$-rational points, for all field extensions $l/k$ as above. 
\end{cor}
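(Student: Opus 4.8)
The plan is to deduce Corollary~\ref{thm:liftsuff} from Theorem~\ref{thm:transfer}, after reducing the Brauer--Manin obstruction to the Hasse principle on $X$ to a single Brauer class of order dividing $2$ modulo $\Br_0 X$. We may assume $X(\A_k)^{\Br X}=\emptyset$, since otherwise the implication in the statement is vacuous and we may take for $l$ any family of finite extensions supplied by Theorem~\ref{thm:transfer} (applied, say, to $\alpha=0$). The key input, and the only place where the hypothesis that $X$ is a Kummer variety is genuinely used beyond the (general) Theorem~\ref{thm:transfer}, is that the Brauer--Manin obstruction to the Hasse principle on a Kummer variety is $2$-primary: since $X$ is geometrically built from a $2$-covering of an abelian variety (a torsor of period dividing $2$) by a quotient construction, one has $X(\A_k)^{\Br X\{2\}}=\emptyset$ whenever $X(\A_k)^{\Br X}=\emptyset$. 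I expect this to follow from the known behaviour of the Brauer--Manin obstruction for torsors under abelian varieties of period $2$ (compare \cite{CreutzViray-DegBMO}) combined with the relationship between the Brauer group of a Kummer variety and that of the underlying abelian variety (cf.\ Section~\ref{sec:ControlAbVar} and \cite{SZ-KumVar}).

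Granting this, the hypothesis $(\Br X/\Br_0 X)\{2\}\cong\Z/2\Z$ does the rest. The image of $\Br X\{2\}$ in $\Br X/\Br_0 X$ is a subgroup of $(\Br X/\Br_0 X)\{2\}$, hence trivial or all of it; it cannot be trivial, for then $\Br X\{2\}\subseteq\Br_0 X$ and every adelic point would be orthogonal to $\Br X\{2\}$ by global reciprocity applied to constant classes, contradicting $X(\A_k)^{\Br X\{2\}}=\emptyset$ and $X(\A_k)\neq\emptyset$. So there is $\gamma\in\Br X\{2\}$ whose image generates $(\Br X/\Br_0 X)\{2\}$, and in particular $[\gamma]\in(\Br X/\Br_0 X)[2]$. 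Since every element of $\Br X\{2\}$ is congruent modulo $\Br_0 X$ to $0$ or to $\gamma$, orthogonality of an adelic point to $\gamma$ (together with the automatic orthogonality to $\Br_0 X$) forces orthogonality to all of $\Br X\{2\}$; hence $X(\A_k)^{\Br X\{2\}}=X(\A_k)^{\gamma}$, so $X(\A_k)^{\gamma}=\emptyset$. Applying Theorem~\ref{thm:transfer} to $[\gamma]$ yields uncountably many finite extensions $l/k$ with $X(\A_k)^{\gamma}=\emptyset\Rightarrow X(\A_l)^{\Res_{l/k}\gamma}=\emptyset$; for such $l$, since $\Res_{l/k}\gamma\in\Br X_l$, we get $X(\A_l)^{\Br X_l}\subseteq X(\A_l)^{\Res_{l/k}\gamma}=\emptyset$, which is the displayed implication. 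For the final assertion, fix such an $l$ and assume the Brauer--Manin obstruction is the only one to the Hasse principle for $k$-points: if $X(l)=\emptyset$ then $X(k)\subseteq X(l)=\emptyset$, so $X(\A_k)^{\Br X}=\emptyset$ by the assumption over $k$, whence $X(\A_l)^{\Br X_l}=\emptyset$ by what was just proved; thus the Brauer--Manin obstruction is the only one to the Hasse principle for $l$-points as well.

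The only non-formal step is the $2$-primariness of the Brauer--Manin obstruction to the Hasse principle on Kummer varieties; this is the crux, and is where the rich structure of the underlying abelian variety must be exploited. Everything else is bookkeeping with $\Br_0 X$, whose purpose is simply to ensure that the unique relevant $2$-torsion class in $\Br X/\Br_0 X$ is represented by an honest element of $\Br X\{2\}$, so that Theorem~\ref{thm:transfer} applies to it; this is taken care of by the dichotomy above. Once the key input is in place, the corollary is an immediate consequence of Theorem~\ref{thm:transfer}.
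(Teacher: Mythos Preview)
Your argument is correct and follows the same route as the paper's proof, which simply cites Theorem~\ref{thm:transfer} together with \cite[Thm.~1.7]{CreutzViray-DegBMO}; the latter is precisely the ``$2$-primariness'' input you single out, so there is no need for the hedging language (``I expect this to follow from\ldots'')---it is exactly that theorem. One small simplification: since $\Br X$ is torsion, the decomposition $\Br X=\Br X\{2\}\oplus \Br X\{2'\}$ (and likewise for $\Br_0 X$) already shows that $\Br X\{2\}$ surjects onto $(\Br X/\Br_0 X)\{2\}$, so the dichotomy argument is unnecessary.
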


\subsection{Structure of the paper}
 In Section~\ref{sec:Kummer} we define the families of varieties to be studied in this paper. In Section~\ref{sec:prod} we state some preliminary results concerning Brauer groups and Brauer-Manin sets of products of varieties. In Section~\ref{sec:ControlAbVar} we show that the $d$-primary part of the transcendental Brauer group of a geometrically abelian or Kummer variety remains unchanged when the base field is extended, given certain conditions on the extension. In Section~\ref{sec:ControlKummer} we use the results of Section~\ref{sec:ControlAbVar} to show that for Kummer varieties, the $d$-primary part of the quotient $\Br X/\Br_0 X$ remains unchanged when the base field is extended, given certain conditions on the extension. In Section~\ref{sec:0-cyc} we gather various definitions concerning Brauer-Manin obstructions to local-global principles for 0-cycles. In
Section~\ref{sec:pfs} we prove our main result, which is Theorem~\ref{thm:mixed}, and deduce from it various consequences including Theorem~\ref{thm:Kummer2-primary}.
Section~\ref{sec:transfer} contains the proofs of Theorem~\ref{thm:transfer} and Corollary~\ref{thm:liftsuff}.

\subsection{Notation and terminology}   Throughout this paper, we use the following notation:  
\[
\begin{array}{rl}
k & \textrm{is a field of characteristic 0},\\
\kbar & \textrm{is a fixed algebraic closure of $k$},\\
\Gamma_k & \textrm{denotes the absolute Galois group $\Gal(\kbar/k)$},\\
\Omega_k & \textrm{is the set of all non-trivial places of $k$, when $k$ is a number field},\\
X & \textrm{is a variety over $k$},\\
X_l & \textrm{is the base-change $ X \times_{\Spec k} \Spec l$ of $X$ to a field extension $l/k$},\\
\overline{X}& \textrm{denotes $X_{\overline{k}}$},\\
\Br_1 X & \textrm{denotes $ \ker(\Br X \to \Br \Xbar)$},\\
\Br_0 X & \textrm{denotes $  \im (\Br k \to \Br X)$},\\
 \Br_1 X/\Br_0 X & \textrm{is the \emph{algebraic} part of the Brauer group of $X$},\\
  \Br X/\Br_1 X & \textrm{is the \emph{transcendental} part of the Brauer group of $X$}.\\
\end{array}\]
For an abelian group $A$ and an integer $d\in\Z_{>0}$, we use the following notation:
\[
\begin{array}{rl}
A[d] & \textrm{denotes the $d$-torsion subgroup of $A$},\\
A\{d\}& \textrm{denotes the $d$-primary part $\varinjlim\limits_{n} A [d^n]$ of $A$}.
 \end{array}\]

\section*{Acknowledgements}
This collaboration began at the workshop \emph{Rational Points 2017}. The authors are grateful to Michael Stoll for organising such a stimulating workshop.
FB thanks the Max-Planck-Institut f{$\ddot{\textrm{u}}$}r Mathematik for the financial support and for providing excellent working conditions.  The authors are grateful to Alexei Skorobogatov and Dami\'{a}n Gvirtz for useful conversations, to Daniel Loughran for pointing out the recent preprint \cite{Ieronymou}, and to Evis Ieronymou for his interest in this work. They are also indebted to the anonymous referee whose helpful comments improved this paper and its exposition. RN is supported by EPSRC grant EP/S004696/1.

\section{Some sets of varieties}\label{sec:Kummer}

Let $k$ be a field of characteristic 0. We define here our main objects of study.

\begin{defn}
Denote by $\overline{\scrR}_k$ the set of smooth, projective, geometrically integral, geometrically
rationally connected varieties over $k$. Denote by $\defi{K3}_k$ the set of K3 surfaces over $k$. (Note that being a K3 surface is a geometric property.) 
\end{defn}

\begin{defn}[{\cite[Defn 2.1]{SZ-KumVar}}] Let $A$ be an abelian variety over $k$ of dimension $\geq 2$. Any $k$-torsor $T$ under $A[2$]
gives rise to a $2$-covering $\rho: V \to A$, where $V$ is the quotient of $A\times_k T$ by the diagonal
action of $A[2]$ and $\rho$ comes from the projection onto the first factor. Then $T =\rho^{-1}(O_A)$ and $V$ has
the structure of a $k$-torsor under $A$. The class of $T$ maps to the class of $V$ under the
map $\mathrm{H}^1_{\et}(k, A[2]) \to \mathrm{H}^1_{\et}(k, A)$ induced by the inclusion of group schemes $A[2] \to A$ and, in
particular, the period of $V$ divides $2$.
Let $\sigma : \tilde{V} \to V$ be the blow-up of $V$ at $T \subset V$. The involution $[-1]: A \to A$ fixes $A[2]$
and induces involutions $\iota$ on $V$ and $\tilde{\iota}$ on $\tilde{V}$ whose fixed point sets are $T$ and the exceptional
divisor, respectively. We call $\Kum V  :=\tilde{V}/\tilde{\iota}$ the \defi{Kummer variety associated
to $V$ (or $T$)}. We remark that the quotient $\Kum V $ is geometrically isomorphic to 
$\Kum A$, so in particular it is smooth. A Kummer variety of dimension $2$ is called a Kummer surface.
\end{defn}

 \begin{defn}  We denote by $\overline{\scrK}_{k,g}$ the set of smooth, projective, geometrically integral varieties $X$ over $k$ of dimension $g$ such that $\overline{X}$ is a Kummer variety over $\overline{k}$, and by ${\scrK}_{k,g}$ the subset 
 of Kummer varieties over $k$ of dimension $g$. We let $\overline{\scrK}_k=\cup_{g\geq 2} \overline{\scrK}_{k,g}$ and, likewise, ${\scrK}_k=\cup_{g\geq 2} {\scrK}_{k,g}$. 
 \end{defn}

\begin{rmk} By \cite[Thm.~3]{Nikulin}, the set $\overline{\scrK}_{k,2}$ consists precisely of the varieties $X$ in $\defi{K3}_k$ such that $ \NS \Xbar$  contains a primitive sublattice isomorphic to the Kummer lattice. Moreover,
by \cite[Prop. 2.1]{VAV16}, there exists some absolute $M \in \Z_{\geq 1}$ (not depending on $k$) such that for each $X\in \overline{\scrK}_{k, 2}$, there is an extension $k_0/k$ of degree at most $M$ for which $X_{k_0}\in  \scrK_{k_0,2}$. In fact, the proof of  \cite[Prop. 2.1]{VAV16} shows that one can take $M= 2 \cdot \#\GL_{20}(\F_3)$. 
\end{rmk}

We will use properties of the underlying abelian varieties to deduce results for Kummer varieties. Therefore, it is convenient to define the following set.

 \begin{defn}
 Denote by $\overline{\scrA}_{k,g}$ the set of smooth, projective, geometrically integral varieties $X$ over $k$ of dimension $g$ such that $\overline{X}$ is an abelian variety over $\overline{k}$. Let $\overline{\scrA}_{k}=\cup_{g\geq 1}\overline{\scrA}_{k,g}$.
 \end{defn}

\section{Preliminaries on Brauer groups of products of varieties}\label{sec:prod}

In order to prove our main results, we need a finiteness result for the Brauer groups of products of varieties.

\begin{prop} \label{brfin} Let $k$ be a number field and let $X$ and $Y$ be smooth, projective, geometrically integral varieties over $k$.  If $\Br \Xbar^{\Gamma_k}$, $\Br_1 X/\Br_0 X$,  $\Br \Ybar^{\Gamma_k}$,  and $\Br_1 Y/\Br_0 Y$ are all finite, then so are  $\Br (\Xbar \times \Ybar)^{\Gamma_k}$, $\Br_1  (X \times Y)/\Br_0  (X \times Y)$, and, consequently,  $\Br (X \times Y)/\Br_0 (X \times Y)$.
\end{prop}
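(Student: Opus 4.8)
The statement is a finiteness result for the Brauer group of a product $X \times Y$, given finiteness of various pieces of the Brauer groups of the factors. The plan is to treat the geometric part $\Br(\Xbar \times \Ybar)^{\Gamma_k}$ and the algebraic part $\Br_1(X\times Y)/\Br_0(X\times Y)$ separately, and then combine them via the exact sequence
\[
0 \to \Br_1(X\times Y)/\Br_0(X\times Y) \to \Br(X\times Y)/\Br_0(X\times Y) \to \Br(\Xbar\times\Ybar)^{\Gamma_k}
\]
(the last map landing in the $\Gamma_k$-invariants because Brauer classes over $k$ are $\Gamma_k$-invariant geometrically), which immediately gives finiteness of $\Br(X\times Y)/\Br_0(X\times Y)$ once the two outer terms are known to be finite.

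\emph{The geometric part.} First I would invoke the Künneth-type decomposition of the Brauer group of a product of smooth proper geometrically integral varieties over an algebraically closed field of characteristic zero. Concretely, for $\Xbar, \Ybar$ over $\kbar$ one has an exact sequence relating $\Br(\Xbar\times\Ybar)$ to $\Br\Xbar \oplus \Br\Ybar$ and a ``mixed'' term built from $\HH^1_{\et}(\Xbar,\Q/\Z)$ (equivalently $\Pic$-torsion data, i.e. $\HH^1(\Xbar, \G_m)$ and $\NS$) of the two factors together with $\HH^3$; this is classical (e.g. via the Leray spectral sequence for $\Xbar\times\Ybar \to \Xbar$, or the comparison with Betti cohomology). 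The point is that the ``mixed'' contribution is controlled by $\HH^1_{\et}(\Xbar,\Q/\Z) \otimes \HH^1_{\et}(\Ybar,\Q/\Z)$ and similar finitely-generated-type data, on which $\Gamma_k$ acts; taking $\Gamma_k$-invariants of the whole sequence and using left-exactness of invariants, I would bound $\Br(\Xbar\times\Ybar)^{\Gamma_k}$ above by $\Br\Xbar^{\Gamma_k} \oplus \Br\Ybar^{\Gamma_k}$ plus the invariants of the mixed term. The mixed term's invariants are finite because $\HH^1_{\et}(\Xbar,\Q/\Z) = (\NS\Xbar)^\vee \otimes \Q/\Z \oplus (\text{finite})$ ... more carefully: $\HH^1_{\et}(\Xbar,\Q/\Z)$ is the Pontryagin dual of $\HH^1_{\et}(\Xbar,\Zhat(1))^{\vee}$-type object, and its $\Gamma_k$-invariants form a finite group since a $\G_m$-equivariant... — the cleanest route is to note that $\HH^1_{\et}(\Xbar,\Q/\Z)$ has finite $\Gamma_k$-invariants whenever $\Br\Xbar^{\Gamma_k}$ is finite is \emph{not} automatic, so instead I would cite the precise statement (this is exactly the content needed and is presumably Lemma/Prop form in Skorobogatov–Zarhin or in the body of this paper) that the mixed term is $\Hom(\HH^1,\HH^1)$-like and its invariants inject into something controlled by $\NS$, hence finitely generated, hence its torsion invariants are finite.

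\emph{The algebraic part.} Using the Hochschild–Serre spectral sequence $\HH^p(\Gamma_k, \HH^q_{\et}(\overline{Z}, \G_m)) \Rightarrow \HH^{p+q}_{\et}(Z,\G_m)$ for $Z = X\times Y$, together with $\HH^1_{\et}(\overline{Z},\G_m) = \Pic\overline{Z}$, one gets (after killing $\Br k$) an injection of $\Br_1 Z/\Br_0 Z$ into $\HH^1(\Gamma_k, \Pic\overline{Z})$, or more precisely a four-term exact sequence involving $\HH^1(\Gamma_k,\Pic\overline Z)$ and $\ker(\HH^3(\Gamma_k,\kbar^\times)\to\dots)$; since $k$ is a number field the latter vanishes, so it suffices to show $\HH^1(\Gamma_k,\Pic\overline{Z})$ is finite. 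Now $\Pic(\Xbar\times\Ybar) \cong \Pic\Xbar \oplus \Pic\Ybar$ (as $X,Y$ have $k$-points — or at least geometrically; if $k$-points are an issue one uses that $X,Y$ are geometrically integral projective and $\HH^0(\Xbar,\G_m)=\kbar^\times$, giving the seesaw splitting up to a term one controls), compatibly with $\Gamma_k$, so $\HH^1(\Gamma_k,\Pic\overline Z) = \HH^1(\Gamma_k,\Pic\Xbar)\oplus\HH^1(\Gamma_k,\Pic\Ybar)$, and each summand is finite by the same Hochschild–Serre argument applied to $X$ (resp. $Y$) since $\Br_1 X/\Br_0 X$ finite forces $\HH^1(\Gamma_k,\Pic\Xbar)$ finite — again modulo the $\HH^3(k,\kbar^\times)=0$ input. (Some care: one actually gets finiteness of $\HH^1(\Gamma_k,\NS\Xbar)$ from general principles since $\NS\Xbar$ is finitely generated and $\Gamma_k$ acts through a finite quotient, and then handles the divisible part $\Pic^0$ via the hypothesis; I would spell this reduction out.)

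\emph{Main obstacle.} The genuinely non-formal step is the geometric part: one needs the Künneth formula for Brauer groups of products over an algebraically closed field and — crucially — that the ``mixed'' $\HH^1\otimes\HH^1$ term has finite $\Gamma_k$-invariants, which does \emph{not} follow from finiteness of $\Br\Xbar^{\Gamma_k}$ and $\Br\Ybar^{\Gamma_k}$ alone by abstract nonsense but requires knowing that $\HH^1_{\et}(\Xbar,\Q/\Z)$, while possibly infinite, is a cofinitely-generated $\Zhat$-module on which the invariants are governed by the (finitely generated) group $\NS\Xbar$ and the Tate module of $\mathrm{Alb}$. I expect this to be where the real work lies, and I would either extract it from \cite{SZ-KumVar}-type results or prove the needed Künneth statement directly via the Kummer sequence and the comparison with singular cohomology, being careful that all identifications are $\Gamma_k$-equivariant.
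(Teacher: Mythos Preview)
Your overall architecture matches the paper's: split into the transcendental piece $\Br(\Xbar\times\Ybar)^{\Gamma_k}$ and the algebraic piece $\Br_1(X\times Y)/\Br_0(X\times Y)$, then combine via the obvious filtration. The paper dispatches both pieces by citing Skorobogatov--Zarhin \cite{SZ-prod}: the geometric invariants are finite by \cite[Cor.~3.2(i)]{SZ-prod}, and for the algebraic part one uses that the cokernel of
\[
\HH^1(k,\Pic\Xbar)\oplus\HH^1(k,\Pic\Ybar)\longrightarrow\HH^1(k,\Pic(\Xbar\times\Ybar))
\]
is finite \cite[Remark~1.9]{SZ-prod}, together with Hochschild--Serre over a number field identifying $\Br_1/\Br_0$ with $\HH^1(k,\Pic)$.

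There is, however, a genuine error in your treatment of the algebraic part. You assert $\Pic(\Xbar\times\Ybar)\cong\Pic\Xbar\oplus\Pic\Ybar$ as $\Gamma_k$-modules. This is false in general: the correct decomposition is
\[
\Pic(\Xbar\times\Ybar)\;\cong\;\Pic\Xbar\oplus\Pic\Ybar\oplus\Hom(\Alb_{\Xbar},\Pic^0_{\Ybar}),
\]
and the third summand is typically nonzero (take $X=Y=E$ an elliptic curve). Your hedge ``up to a term one controls'' does not address this, and your later remark about handling ``the divisible part $\Pic^0$'' is about a different issue. The fix is easy once identified: $\Hom(\Alb_{\Xbar},\Pic^0_{\Ybar})$ is a finitely generated abelian group on which $\Gamma_k$ acts through a finite quotient, so its $\HH^1$ is finite; this is exactly the content behind the cokernel statement the paper cites. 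But as written, your algebraic-part argument is incorrect.

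For the geometric part you correctly locate the nontrivial input (the K\"unneth-type description of $\Br(\Xbar\times\Ybar)$ and finiteness of invariants of the mixed term) and correctly anticipate that it comes from the literature rather than abstract nonsense; the paper simply cites \cite{SZ-prod} for this.
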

\begin{proof}  By \cite[Cor 3.2(i)]{SZ-prod}, $\Br (\Xbar \times \Ybar)^{\Gamma_k}$ is finite since both $\Br \Xbar^{\Gamma_k}$ and $\Br \Ybar^{\Gamma_k}$ are (by hypothesis). Using the natural injection
\[ \Br (X \times Y)/\Br_1 (X \times Y) \hookrightarrow \Br (\Xbar \times \Ybar)^{\Gamma_k},\]
it follows that $\Br (X \times Y)/\Br_1 (X \times Y)$ is finite. Moreover, by \cite[Remark 1.9]{SZ-prod}, the cokernel of the natural map
\[ \mathrm{H}_{\et}^1(k, \Pic \Xbar) \oplus \mathrm{H}_{\et}^1(k, \Pic \Ybar) \to \mathrm{H}_{\et}^1(k, \Pic(\Xbar \times \Ybar))\]
is finite. Since $k$ is a number field, the Hochschild-Serre spectral sequence 
\[\mathrm{H}^p_{\textrm{cts}}(\Gal(\kbar/k), \mathrm{H}_{\et}^q (\Xbar, \G_{m})) \Rightarrow \mathrm{H}_{\et}^{p+q}(X, \G_{m})\]
 yields $\mathrm{H}_{\et}^1(k, \Pic \Xbar) = \Br_1 X/ \Br_0 X$, and similarly  $\mathrm{H}_{\et}^1(k, \Pic \Ybar) = \Br_1 Y / \Br_0 Y$.  By hypothesis, $\Br_1 X / \Br_0 X$ and $\Br_1 Y / \Br_0 Y$  are finite, hence $\mathrm{H}_{\et}^1(k, \Pic(\Xbar \times \Ybar)) = \Br_1  (X \times Y)/\Br_0  (X \times Y)$ is also finite. Putting everything together, it follows that $\Br (X \times Y)/\Br_0 (X \times Y)$ is finite, as required.
\end{proof}

\begin{rmk} By induction on $n$, Proposition \ref{brfin} holds for $\prod_{i=1}^n X_i$, where the $X_i$ are smooth, projective, geometrically integral  varieties over $k$.
\end{rmk}

\begin{rmk} Let $l/k$ be an extension of number fields. By \cite[Cor.~2.8]{SZ-KumVar} and \cite[Thm.~1.2]{SZ-K3finite}, respectively, if $X\in \overline{\scrK}_k$ or $X\in\defi{K3}_k$, then both $\Br \Xbar^{\Gamma_l}$ and $\Br_1 X_l/\Br_0 X_l$ are finite. 
If $X/k$ is geometrically rationally connected, then $\Br\Xbar$ is finite and the finiteness of $\Br X_l/\Br_0 X_l$ is an easy consequence of the Hochschild-Serre spectral sequence -- see \cite[Prop 3.1.1]{Liang}, for example.
\end{rmk}

\begin{lemma} \label{brsplit}Let $k$ be a number field and let $X$ and $Y$ be smooth, projective, geometrically integral varieties over $k$.  Let $l/k$ be a finite extension. Then the identification $(X \times Y) (\A_l)=X(\A_l)\times Y(\A_l)$ induces natural inclusions 
 \[(X \times Y) (\A_l)^{\Br(X \times Y)} \subset  X(\A_l)^{\Br X} \times Y(\A_l)^{\Br Y}\]  and
 \[(X \times Y) (\A_l)^{\Br(X \times Y)\{d\}} \subset  X(\A_l)^{\Br X\{d\}} \times Y(\A_l)^{\Br Y\{d\}}\] 
 for all $d \in \Z_{>0}$.\end{lemma}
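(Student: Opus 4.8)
The plan is to reduce the claim to the functoriality of the Brauer--Manin pairing under projection maps. Write $p_X : X \times Y \to X$ and $p_Y : X \times Y \to Y$ for the two projections, and recall that the identification $(X\times Y)(\A_l) = X(\A_l) \times Y(\A_l)$ is compatible with these projections: if $(P,Q) \in X(\A_l)\times Y(\A_l)$ corresponds to the adelic point $R$, then $p_X \circ R = P$ and $p_Y\circ R = Q$. The key input is the compatibility of Manin's pairing with pullback of Brauer classes, namely that for any $\calA \in \Br X$ one has
\[
\langle R, p_X^*\calA\rangle_{\mathrm{BM}} = \langle P, \calA\rangle_{\mathrm{BM}},
\]
and similarly for $Y$. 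This is standard (see e.g. \cite{Skorobogatov-Torsors}); it follows place-by-place from the functoriality of the local evaluation maps $\Br X \times X(l_v) \to \Br l_v$.

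First I would take $R \in (X\times Y)(\A_l)^{\Br(X\times Y)}$ and let $(P,Q)$ be the corresponding pair. To show $P \in X(\A_l)^{\Br X}$, pick any $\calA \in \Br X$; then $p_X^*\calA \in \Br(X\times Y)$, so $\langle R, p_X^*\calA\rangle_{\mathrm{BM}} = 0$, and by the compatibility above this equals $\langle P, \calA\rangle_{\mathrm{BM}}$, whence $\langle P, \calA\rangle_{\mathrm{BM}} = 0$. Since $\calA$ was arbitrary, $P \in X(\A_l)^{\Br X}$, and symmetrically $Q \in Y(\A_l)^{\Br Y}$. This gives the first inclusion. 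For the second inclusion, I would simply observe that $p_X$ restricts to a homomorphism $\Br X\{d\} \to \Br(X\times Y)\{d\}$ (pullback preserves the order of a class, hence its $d$-primary part), so the identical argument applied only to classes $\calA \in \Br X\{d\}$ — and, respectively, $\calB \in \Br Y\{d\}$ — shows that $R \in (X\times Y)(\A_l)^{\Br(X\times Y)\{d\}}$ forces $P \in X(\A_l)^{\Br X\{d\}}$ and $Q \in Y(\A_l)^{\Br Y\{d\}}$.

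There is no real obstacle here: the statement is essentially a formal consequence of the functoriality of the Brauer--Manin pairing together with the fact that an adelic point on a product is a pair of adelic points on the factors compatible with the projections. The only point requiring a modicum of care is to make sure the word ``natural'' in the statement is justified — i.e. that the inclusions are induced by the canonical identification of adelic points and do not depend on auxiliary choices — but this is immediate once the pairing compatibility is invoked, since everything is expressed through the fixed projection morphisms $p_X$ and $p_Y$. I would write the argument uniformly, treating $\Br$ and $\Br\{d\}$ in parallel by noting that in both cases the relevant subgroup of $\Br(X\times Y)$ contains $p_X^*$ of the corresponding subgroup of $\Br X$ (and likewise for $Y$), which is all the proof uses.
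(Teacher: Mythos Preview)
Your proof is correct and follows essentially the same approach as the paper: both argue via functoriality of the Brauer--Manin set with respect to the projection morphisms $p_X$ and $p_Y$. The paper simply states this functoriality in one line, whereas you have unpacked the argument by explicitly tracing the pairing through $p_X^*$ and $p_Y^*$, but the underlying idea is identical.
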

\begin{proof} Note that both the assignments $V \mapsto V(\A_l)^{\Br V \{d\}}$ and $V \mapsto V(\A_l)^{\Br V}$ from the category of varieties over $k$ to the category of sets are functorial. Hence, the result follows immediately by functoriality applied to the projections $X \times Y \to X$ and $X \times Y \to Y$.
\end{proof}
\begin{rmk} \label{rmk1} By induction on $n$, Lemma \ref{brsplit} holds for $\prod_{i=1}^n X_i$, where the $X_i$ are smooth, projective, geometrically integral  varieties over $k$.
\end{rmk}

\begin{rmk}
Note that in Lemma~\ref{brsplit} when we write $X(\A_l)^{\Br X}$ we are implicitly using the restriction map $\Res_{l/k}:\Br X\to \Br X_l$ to obtain a Brauer-Manin pairing $X(\A_l)\times \Br X\to \Q/\Z$. The image of $\Res_{l/k}$ may not be equal to $\Br X_l$, so Lemma~\ref{brsplit} does not follow from \cite[Thm.~C]{SZ-prod}. 
\end{rmk}

\section{Controlling the transcendental Brauer groups of geometrically abelian varieties and geometrically Kummer varieties over field extensions}\label{sec:ControlAbVar}

Let $k$ be a field of characteristic 0. In this section we show that the $d$-primary part of the transcendental Brauer group of a geometrically abelian or Kummer variety does not grow when the base field is extended, so long as the field extension satisfies certain conditions. 

\begin{prop}\label{prop:nogrowthabvar}
Let $d,n\in \Z_{>0}$. 
Let $X$ be a smooth, proper, geometrically integral variety over $k$. Let $l/k$ be a finite extension with degree coprime to $d$ such that $\Br\overline{X}[d^n]^{\Gamma_l}=\Br\overline{X}[d^n]^{\Gamma_k}$. 
Then
 the restriction map gives a canonical isomorphism of abelian groups
\[ \Res_{l/k}:(\Br X)[d^n]/(\Br_1 X)[d^n]\xrightarrow{\sim} (\Br X_l)[d^n]/(\Br_1 X_l)[d^n].\]
\end{prop}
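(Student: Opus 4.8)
The plan is to exploit the functoriality of the Hochschild--Serre spectral sequence
\[
\mathrm{H}^p_{\mathrm{cts}}(\Gamma_k, \mathrm{H}^q_{\et}(\overline{X}, \G_m)) \Rightarrow \mathrm{H}^{p+q}_{\et}(X, \G_m)
\]
under base change from $k$ to $l$, combined with a restriction--corestriction argument to kill the degrees that could obstruct an isomorphism. First I would recall that the filtration on $\Br X$ coming from this spectral sequence has $\Br_1 X/\Br_0 X = \mathrm{H}^1(\Gamma_k, \Pic\overline{X})$ and that the transcendental quotient $\Br X/\Br_1 X$ injects into $\mathrm{H}^0(\Gamma_k, \Br\overline{X}) = \Br\overline{X}^{\Gamma_k}$, with the obstruction to surjectivity living in $\mathrm{H}^2(\Gamma_k, \Pic\overline{X})$ (via the differential $d_2^{0,2}$, after noting $\mathrm{H}^3(\Gamma_k,\overline{k}^\times)$ contributes nothing relevant here — or one works modulo it). Restricting everything to $d^n$-torsion, I get a commutative diagram whose rows are the exact sequences
\[
0 \to (\Br_1 X)[d^n]/(\Br_0 X \cap \cdot) \to (\Br X)[d^n] \to (\Br\overline{X})[d^n]^{\Gamma_k} \xrightarrow{\partial} \mathrm{H}^2(\Gamma_k, \Pic\overline{X})
\]
and the analogue over $l$, with vertical arrows given by restriction.

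\textbf{Key steps.} The argument then reduces to two points. (1) The hypothesis $\Br\overline{X}[d^n]^{\Gamma_l} = \Br\overline{X}[d^n]^{\Gamma_k}$ makes the ``$\mathrm{H}^0$'' terms agree, so after passing to the quotient $(\Br X)[d^n]/(\Br_1 X)[d^n] \hookrightarrow \Br\overline{X}[d^n]^{\Gamma_k}$ I just need the images of the two maps (from level $k$ and from level $l$) inside $\Br\overline{X}[d^n]^{\Gamma_k}$ to coincide. Injectivity of $\Res_{l/k}$ on $(\Br X)[d^n]/(\Br_1 X)[d^n]$ follows because it is compatible with the injections into $\Br\overline{X}[d^n]^{\Gamma_k}$, where the two groups land the same way. (2) For surjectivity: given a class $\xi \in \Br\overline{X}[d^n]^{\Gamma_k}$ that lifts to $(\Br X_l)[d^n]$, I need it to already lift to $(\Br X)[d^n]$, i.e. I need $\partial_k(\xi) = 0$ in $\mathrm{H}^2(\Gamma_k, \Pic\overline{X})$, knowing $\partial_l(\xi) = \Res_{l/k}\partial_k(\xi) = 0$ in $\mathrm{H}^2(\Gamma_l, \Pic\overline{X})$. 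Here is where $[l:k]$ coprime to $d$ enters: $\partial_k(\xi)$ is $d^n$-torsion (being the image of a $d^n$-torsion class under a differential, or because it measures the failure to lift a $d^n$-torsion class — one should check the target can be taken to be the $d^n$-torsion part, or just work in $\mathrm{H}^2(\Gamma_k,\Pic\overline{X})\{d\}$), and the composite $\Cor_{l/k} \circ \Res_{l/k}$ is multiplication by $[l:k]$, which is invertible on $d$-primary torsion; hence $\Res_{l/k}\partial_k(\xi) = 0$ forces $\partial_k(\xi) = 0$.

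\textbf{Main obstacle.} The technical heart is bookkeeping with the spectral sequence: precisely identifying the connecting map whose vanishing governs liftability, confirming that $\partial_k(\xi)$ really is $d$-primary torsion so that the restriction--corestriction trick applies, and handling the possible contribution of $\mathrm{H}^3(\Gamma_k,\overline{k}^\times)$ to the filtration on $\Br X$ — over a number field or more generally a field of cohomological dimension issues this can be sidestepped, but in the stated generality ($k$ of characteristic $0$) one must argue that this term does not interfere with the comparison of $d^n$-torsion, again using a restriction--corestriction argument on that piece. Once these points are pinned down, the commutative diagram and the five-lemma-style diagram chase yield the canonical isomorphism. I would also remark that the map is genuinely canonical because it is induced by the functorial restriction $\Br X \to \Br X_l$, with no choices involved.
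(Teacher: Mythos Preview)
Your approach via the Hochschild--Serre spectral sequence is workable in spirit but contains a genuine gap in the surjectivity step. You assert that ``$\xi$ lifts to $(\Br X)[d^n]$'' is equivalent to ``$\partial_k(\xi)=0$'', but this is not correct: vanishing of the differential $d_2^{0,2}$ (and of the higher differentials you mention) only ensures that $\xi$ lifts to some $\alpha\in\Br X$. You still need a lift lying in $(\Br X)[d^n]$. From the snake lemma applied to multiplication by $d^n$ on $0\to\Br_1 X\to\Br X\to\Br X/\Br_1 X\to 0$, the obstruction to modifying $\alpha$ into a $d^n$-torsion lift is the image of $[\alpha]\in(\Br X/\Br_1 X)[d^n]$ under the connecting map $\delta_k\colon(\Br X/\Br_1 X)[d^n]\to\Br_1 X/d^n\Br_1 X$. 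This obstruction is not addressed in your outline. It can be killed by the same restriction--corestriction trick (since $\Br_1 X/d^n$ is $d^n$-torsion and $\Cor\circ\Res=[l:k]$ is invertible there), but as written your argument is incomplete. Together with the $\mathrm{H}^3(\Gamma_k,\bar{k}^\times)$ issue you already flagged, the bookkeeping becomes substantial.

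By contrast, the paper's proof bypasses the spectral sequence entirely and is much shorter. It shows directly that $\Cor_{l/k}$ induces an \emph{injection} $(\Br X_l)[d^n]/(\Br_1 X_l)[d^n]\hookrightarrow(\Br X)[d^n]/(\Br_1 X)[d^n]$: if $B\in(\Br X_l)[d^n]$ has $\Cor_{l/k}B\in\Br_1 X$, then the formula $\Res_{\bar{k}/k}\circ\Cor_{l/k}B=\sum_{\sigma\in\Gamma_k/\Gamma_l}\sigma(\Res_{\bar{k}/l}B)$ together with the hypothesis $\Br\overline{X}[d^n]^{\Gamma_l}=\Br\overline{X}[d^n]^{\Gamma_k}$ gives $0=[l:k]\Res_{\bar{k}/l}B$, whence $B\in\Br_1 X_l$. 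Since $\Cor_{l/k}\circ\Res_{l/k}=[l:k]$ is invertible on $d^n$-torsion, $\Cor_{l/k}$ is also surjective, hence an isomorphism; so $\Res_{l/k}$ is too. This argument works uniformly for any field $k$ of characteristic $0$, with no need to analyse differentials or $\mathrm{H}^3(\Gamma_k,\bar{k}^\times)$.
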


\begin{proof}
\noindent\emph{Step 1:} We will show that the corestriction map induces an injection 
\[\Cor_{l/k} : (\Br X_l)[d^n]/(\Br_1 X_l)[d^n] \hookrightarrow (\Br X)[d^n]/(\Br_1 X)[d^n].\]
Let $B\in (\Br X_l)[d^n]$ and suppose that $\Cor_{l/k}B\in \Br_1 X$. By \cite[Lemme 1.4]{CTSk-2013}, 
\begin{equation}\label{eq:CorB}
\Res_{\overline{k}/k}\circ\Cor_{l/k}B=\sum_{\sigma\in\Gamma_k/\Gamma_l}\sigma(\Res_{\overline{k}/l}B),
\end{equation}
where the sum is over a set of coset representatives of $\Gamma_l$ in $\Gamma_k$. Since $\Cor_{l/k}B\in \Br_1 X$, the left-hand side equals zero. Moreover, $\Res_{\overline{k}/l}B\in\Br\overline{X}[d^n]^{\Gamma_l}$. By our hypothesis, this implies that $\Res_{\overline{k}/l}B\in\Br\overline{X}[d^n]^{\Gamma_k}$. Therefore, \eqref{eq:CorB} becomes
\begin{equation}\label{eq:BinBr1}
0=[l:k]\Res_{\overline{k}/l}B.
\end{equation}
Now recall that $[l:k]$ is coprime to $d$. Therefore, \eqref{eq:BinBr1} shows that $\Res_{\overline{k}/l}B=0$, since $B\in(\Br X_l)[d^n]$. Hence, $B\in(\Br_1 X_l)[d^n]$, as required.\\

\noindent\emph{Step 2:} Since $\Cor_{l/k}\circ\Res_{l/k}=[l:k]$ and $[l:k]$ is coprime to $d$, the corestriction map \[\Cor_{l/k}:(\Br X_l)[d^n]/(\Br_1 X_l)[d^n]\to (\Br X)[d^n]/(\Br_1 X)[d^n]\]
is surjective. Step 1 shows that it is an isomorphism and consequently the restriction map
\[\Res_{l/k}:(\Br X)[d^n]/(\Br_1 X)[d^n]  \to  (\Br X_l)[d^n]/(\Br_1 X_l)[d^n]\]
  is also an isomorphism.
\end{proof}

\begin{lemma}\label{lem:ppower}
Let $d \in \Z_{>0}$. 
Let $X\in\overline{\scrA}_{k}\cup \overline{\scrK}_{k}$. 
Let $l/k$ be a finite extension of degree coprime to $d$ such that the image of $\Gamma_l$ in $\Aut(\Br\overline{X}[d])$ is the same as that of $\Gamma_k$. 
Then for all $n\in\Z_{>0}$, the image of $\Gamma_l$ in $\Aut(\Br\overline{X}[d^n])$ is the same as that of $\Gamma_k$.
\end{lemma}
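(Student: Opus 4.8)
The statement is a purely group-theoretic consequence of the structure of $\Br\overline{X}\{d\}$ for $X$ in $\overline{\scrA}_k\cup\overline{\scrK}_k$. For an abelian variety $A$ over $\overline{k}$ of dimension $g$ one has, for each prime $p$, a Galois-equivariant description of $\Br\overline{A}\{p\}$ coming from the Kummer sequence: $\Br\overline{A}[p^n]$ fits into an exact sequence built out of $\wedge^2 A[p^n]$ modulo the image of $\NS\overline{A}$, so it is a subquotient of $\wedge^2 A[p^n]\cong \wedge^2(\Z/p^n)^{2g}$. The key point is that the whole tower $\Br\overline{X}\{p\}$ is, as a $\Gamma_k$-module, controlled by the action on the $p$-adic Tate module $T_p A$: concretely, the action factors through the image of $\Gamma_k$ in $\GL(T_pA)\cong \GL_{2g}(\Z_p)$, and the action on $\Br\overline{X}[p^n]$ is a subquotient of the action on $\wedge^2(A[p^n])$. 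For Kummer varieties one reduces to the abelian variety case via $\Br\overline{X}\cong\Br\overline{A}$ (more precisely via the relationship exploited in Section~\ref{sec:ControlAbVar}, e.g. the identification used in \cite{SZ-KumVar}) with $A$ the underlying abelian variety, so the same applies. So I would first record this: there is a $\Gamma_k$-module $\Lambda$, free of finite rank over $\Z_p$ (namely $\wedge^2 T_pA$, or the relevant sub/quotient), such that $\Br\overline{X}[p^n]$ is a subquotient of $\Lambda/p^n\Lambda$ compatibly in $n$, and the $\Gamma_k$-action on everything factors through $\rho\colon\Gamma_k\to\GL(\Lambda)$.

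**The reduction.** Fix a prime $p\mid d$; since the hypothesis and conclusion are about $\Br\overline{X}[d]$ and $\Br\overline{X}[d^n]$ and these split as direct sums over the primes dividing $d$, it suffices to treat one prime at a time, so I may as well assume $d=p^m$ for the statement; in fact since the image in $\Aut(\Br\overline{X}[d])$ being equal forces equality prime-by-prime on $\Br\overline{X}[p]$, I reduce to proving: if the images of $\Gamma_l$ and $\Gamma_k$ in $\Aut(\Br\overline{X}[p])$ agree, then they agree in $\Aut(\Br\overline{X}[p^n])$ for all $n$. Now the real content is a congruence-subgroup / pro-$p$ argument. Let $G=\rho(\Gamma_k)$ and $H=\rho(\Gamma_l)$, both subgroups of $\GL(\Lambda)$ with $\Lambda\cong\Z_p^{N}$; then $H\subseteq G$ with $[G:H]$ dividing $[l:k]$, hence coprime to $p$, so $H$ contains a full Sylow-pro-$p$ part of $G$, i.e. $[G:H]$ is prime to $p$. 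We are told $H$ and $G$ have the same image modulo $p$ (acting on $\Lambda/p$, which surjects onto — or maps to — $\Br\overline{X}[p]$; I need to be slightly careful that equality of images on $\Br\overline{X}[p]$ gives equality of images on $\Lambda/p$, which follows because $\Br\overline{X}[p]$ is built functorially from $\Lambda/p$ and the kernel of $\GL(\Lambda/p)\to\Aut(\Br\overline{X}[p])$ is the same for both subgroups — or one argues directly on $\Lambda/p$). The claim then becomes: if $H\le G\le\GL_N(\Z_p)$ with $[G:H]$ prime to $p$ and $H$, $G$ have equal image in $\GL_N(\Z/p)$, then $H=G$ (and in particular they have equal image mod $p^n$ for all $n$, which is what we want after pushing back down to $\Br\overline{X}[p^n]$).

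**The group theory lemma.** This last assertion is standard: let $K=\ker(\GL_N(\Z_p)\to\GL_N(\Z/p))$, the first congruence subgroup, which is a pro-$p$ group. Then $G=H\cdot(G\cap K)$ by the hypothesis on images mod $p$. Now $G\cap K$ is pro-$p$, and $H\cap(G\cap K)=H\cap K$ is a normal (even a finite-index) subgroup of $G\cap K$ with index dividing $[G:H]$, hence of index prime to $p$; but an open subgroup of a pro-$p$ group of index prime to $p$ is the whole group, so $G\cap K\subseteq H$, whence $G=H\cdot(G\cap K)=H$. Therefore $\rho(\Gamma_l)=\rho(\Gamma_k)$ as subgroups of $\GL(\Lambda)$, so their images agree in every finite quotient, in particular in $\Aut(\Lambda/p^n\Lambda)$ and hence — by functoriality of the subquotient construction $\Br\overline{X}[p^n]$ from $\Lambda/p^n\Lambda$, which is $\Gamma_k$-equivariant — in $\Aut(\Br\overline{X}[p^n])$. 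Summing over primes $p\mid d$ gives the statement for $\Br\overline{X}[d^n]$.

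**Main obstacle.** The genuinely delicate point is not the pro-$p$ group theory (which is routine) but setting up the $\Z_p$-lattice $\Lambda$ with a $\Gamma_k$-equivariant, $n$-compatible surjection (or at least: functorial subquotient presentation) $\Lambda/p^n\Lambda\twoheadrightarrow \Br\overline{X}[p^n]$, and verifying that equality of images on $\Br\overline{X}[p]$ propagates to equality of images on $\Lambda/p\Lambda$. For abelian varieties this is the description of $\Br\overline{A}$ via $\HH^2_{\et}(\overline{A},\mu_{p^n})$ and the resulting expression in terms of $\wedge^2 A[p^n]$ and $\NS\overline{A}$; one must check the transition maps are compatible and that $\NS\overline{A}$ contributes a saturated (or at least controlled) piece so that the quotient still arises from a $\Z_p$-lattice. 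For Kummer varieties one invokes the comparison $\Br\overline{X}\cong\Br\overline{A}$ already in play in this section. I expect the authors in fact arrange this more cheaply by working directly with whatever explicit $\Gamma_k$-module description of $\Br\overline{X}\{d\}$ is available from \cite{SZ-KumVar}, but the pro-$p$ mechanism above is the heart of the argument regardless.
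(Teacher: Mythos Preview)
Your core group-theoretic mechanism is correct and is exactly the paper's idea: the kernel of reduction $\GL_m(\Z/p^n)\to\GL_m(\Z/p)$ (or, in your profinite phrasing, $\GL_m(\Z_p)\to\GL_m(\Z/p)$) is a (pro-)$p$-group, so if $[\phi(\Gamma_k):\phi(\Gamma_l)]$ is prime to $p$ and the images agree modulo $p$, they agree modulo $p^n$. That is precisely Step~2 of the paper's proof.

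Where you diverge from the paper, and where a gap appears, is in the setup. The paper does \emph{not} route through an auxiliary lattice $\Lambda=\wedge^2 T_pA$; instead it computes directly (Step~1) that $\Br\overline{X}[p^n]\cong(\Z/p^n)^m$ as abelian groups for a fixed $m$, using the Kummer sequence and the freeness of $\NS\overline{A}$. This immediately identifies $\Aut(\Br\overline{X}[p^n])$ with $\GL_m(\Z/p^n)$, and the natural map $\Aut(\Br\overline{X}[p^n])\to\Aut(\Br\overline{X}[p])$ with reduction modulo $p$, so the $p$-group kernel argument applies \emph{directly to the object in the hypothesis}. Your approach instead needs to transfer the hypothesis ``images agree on $\Br\overline{X}[p]$'' up to ``images agree on $\Lambda/p$'', and your parenthetical justification (``the kernel of $\GL(\Lambda/p)\to\Aut(\Br\overline{X}[p])$ is the same for both subgroups'') does not establish this: having equal image in a quotient group says nothing about equality in the source. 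The only clean fix is to take $\Lambda=T_p(\Br\overline{X}\{p\})$ so that $\Lambda/p^n=\Br\overline{X}[p^n]$ on the nose, but knowing this is free of finite rank over $\Z_p$ is exactly the paper's Step~1 computation---at which point the lattice is superfluous and one may as well argue at each finite level as the paper does.

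Two minor points: your claim that $H\cap K$ is normal in $G\cap K$ is unjustified (and false in general), but you don't need it; the injection of coset spaces $(G\cap K)/(H\cap K)\hookrightarrow G/H$ already gives $[G\cap K:H\cap K]\mid[G:H]$, which suffices. And the reduction to a single prime $p$ is valid, as in the paper, because $\Aut(\Br\overline{X}[d^n])=\prod_{p\mid d}\Aut(\Br\overline{X}[p^{e_pn}])$.
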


\begin{proof}
Factorise $d$ into primes, writing $d=p_1^{e_1}\dots p_r^{e_r}$. Then \[\Br\overline{X}[d^n]=\Br\overline{X}[p_1^{e_1n}]\times \dots \times \Br\overline{X}[p_r^{e_rn}]\] and \[\Aut(\Br\overline{X}[d^n])=\Aut(\Br\overline{X}[p_1^{e_1n}])\times \dots \times \Aut(\Br\overline{X}[p_r^{e_rn}]).\]
Thus, it suffices to prove the lemma when $d$ is a prime $p$.  Let $g := \dim X$.

\noindent\emph{Step 1:} First we show that there exists $m\in \Z_{\geq 0}$ such that for all $n\in \Z_{>0}$, $\Br\overline{X}[p^n]\cong (\Z/p^n\Z)^{m}$ as abelian groups. We begin by proving this in the case where $\overline{X}$ is an abelian variety, denoted by $\overline{A}$.

The Kummer exact sequence for $\overline{A}$ gives the following exact sequence of Galois modules:
\[0\to \NS\overline{A}/p^n\to \mathrm{H}^2_{\textrm{\'{e}t}}(\overline{A},\mu_{p^n})\to \Br\overline{A}[p^n]\to 0.\]

As an abelian group with no Galois action, \[\mathrm{H}^2_{\textrm{\'{e}t}}(\overline{A},\mu_{p^n})=\mathrm{H}^2_{\textrm{\'{e}t}}(\overline{A},\Z/p^n\Z)=\wedge^2\mathrm{H}^1_{\textrm{\'{e}t}}(\overline{A},\Z/p^n\Z).\]
It is well known that $\mathrm{H}^1_{\textrm{\'{e}t}}(\overline{A},\Z/p^n\Z)$ is isomorphic to $\Hom(\overline{A}[p^n],\Z/p^n\Z)$, see \cite[Lemma 2.1]{SZ-2012}, for example. As an abelian group, $\overline{A}[p^n]=(\Z/p^n\Z)^{2g}$ and hence \[\wedge^2\Hom(\overline{A}[p^n],\Z/p^n\Z)=\wedge^2(\Z/p^n\Z)^{2g}=(\Z/p^n\Z)^{g(2g-1)}.\]
Furthermore, $\NS\overline{A}$ is finitely generated and torsion-free of rank $\rho$ with $1\leq \rho\leq g^2$. Therefore, as an abelian group, $\Br\overline{A}[p^n]\cong (\Z/p^n\Z)^{m}$, where $m=g(2g-1)-\rho$.

Now if $X\in \overline{\scrK}_{k,g}$ then $\overline{X}=\Kum \overline{A}$ for some abelian variety $A$. Hence, by \cite[Proposition~2.7]{SZ-KumVar}, $\Br\overline{X}=\Br\overline{A}$. This completes Step 1.
  
\noindent\emph{Step 2:}  Since $\Br\overline{X}[p^n]\cong (\Z/p^n\Z)^{m}$ as abelian groups, we have $ \Aut(\Br\overline{X}[p^{n}])\cong\GL_m(\Z/p^n\Z)$.
The inclusion $\Br\overline{X}[p]\subset\Br\overline{X}[p^{n}]$ induces a surjective map
\[\alpha: \Aut(\Br\overline{X}[p^{n}])\to\Aut(\Br\overline{X}[p]),\] which can be identified with the map $\GL_m(\Z/p^n\Z)\to \GL_m(\Z/p\Z)$ given by reduction modulo $p$.  
Therefore, 
$\ker\alpha$ is a $p$-group.

Let $\phi:\Gamma_k\to \Aut(\Br\overline{X}[p^{n}])$ be the map giving the Galois action. The First Isomorphism Theorem gives
\[\#\phi(\Gamma_k)=\#(\phi(\Gamma_k)\cap\ker\alpha)\cdot \#\alpha(\phi(\Gamma_k))\] and \[
\#\phi(\Gamma_l)=\#(\phi(\Gamma_l)\cap\ker\alpha) \cdot \#\alpha(\phi(\Gamma_l)).\]
 Note that $\alpha\circ\phi :\Gamma_k\to \Aut(\Br\overline{X}[p])$ is the map giving the Galois action. Hence, $\alpha(\phi(\Gamma_k))=\alpha(\phi(\Gamma_l))$ by assumption. Therefore, 
\[[\phi(\Gamma_k):\phi(\Gamma_l)]=\frac{\#(\phi(\Gamma_k))}{\#(\phi(\Gamma_l))}=\frac{\#(\phi(\Gamma_k)\cap\ker\alpha)}{\#(\phi(\Gamma_l)\cap\ker\alpha)}.\] This is a power of $p$, since $\ker\alpha$ is a $p$-group. On the other hand, $[\phi(\Gamma_k):\phi(\Gamma_l)]$ divides $[\Gamma_k:\Gamma_l]=[l:k]$, which is coprime to $p$ by assumption.
Therefore, $[\phi(\Gamma_k):\phi(\Gamma_l)]=1$, as required.
\end{proof}

\begin{cor}\label{kum-ext3NU} 
Let $d\in \Z_{>0}$.
Let $X\in\overline{\scrA}_{k}\cup \overline{\scrK}_{k}$. Let $F/k$ be a finite extension such that $\Gamma_F$ acts trivially on $\Br\overline{X}[d]$. Let $l/k$ be a finite extension of degree coprime to $d$ which is linearly disjoint from $F/k$.
Then for all $n\in\Z_{>0}$,
\[ \Res_{l/k}:(\Br X)[d^n]/(\Br_1 X)[d^n]\to (\Br X_l)[d^n]/(\Br_1 X_l)[d^n]\]
is an isomorphism and consequently
\[ \Res_{l/k}:(\Br X/\Br_1 X)\{d\}\to (\Br X_l/\Br_1 X_l)\{d\}\]
is an isomorphism.
\end{cor}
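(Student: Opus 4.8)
The plan is to deduce Corollary~\ref{kum-ext3NU} from Lemma~\ref{lem:ppower} and Proposition~\ref{prop:nogrowthabvar}, with the only real work being to check that the hypotheses of the latter are met. First I would observe that linear disjointness of $l/k$ from $F/k$ means that $\Gal(Fl/l)\cong\Gal(F/k)$ via restriction, so that the compositum $Fl$ is the field cut out by the kernel of the $\Gamma_l$-action on $\Br\overline{X}[d]$ in exactly the same way $F$ is for $\Gamma_k$; concretely, since $\Gamma_F$ acts trivially on $\Br\overline{X}[d]$ and $\Gamma_{Fl}=\Gamma_F\cap\Gamma_l$, and since $[\Gamma_l:\Gamma_{Fl}]=[Fl:l]=[F:k]=[\Gamma_k:\Gamma_F]$, the image of $\Gamma_l$ in $\Aut(\Br\overline{X}[d])$ coincides with the image of $\Gamma_k$.

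Next I would apply Lemma~\ref{lem:ppower} with this extension $l/k$: its hypotheses (namely $X\in\overline{\scrA}_k\cup\overline{\scrK}_k$, $[l:k]$ coprime to $d$, and the images of $\Gamma_l$ and $\Gamma_k$ in $\Aut(\Br\overline{X}[d])$ agreeing) are exactly what we have just verified, so we conclude that for every $n\in\Z_{>0}$ the image of $\Gamma_l$ in $\Aut(\Br\overline{X}[d^n])$ equals that of $\Gamma_k$. In particular the two subgroups of $\Aut(\Br\overline{X}[d^n])$ are equal, hence they have the same fixed points, i.e. $\Br\overline{X}[d^n]^{\Gamma_l}=\Br\overline{X}[d^n]^{\Gamma_k}$ for all $n$.

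With that in hand I would invoke Proposition~\ref{prop:nogrowthabvar} for each fixed $n$: $X$ is smooth, proper, geometrically integral, $l/k$ is finite of degree coprime to $d$, and $\Br\overline{X}[d^n]^{\Gamma_l}=\Br\overline{X}[d^n]^{\Gamma_k}$, so the restriction map
\[\Res_{l/k}\colon(\Br X)[d^n]/(\Br_1 X)[d^n]\xrightarrow{\ \sim\ }(\Br X_l)[d^n]/(\Br_1 X_l)[d^n]\]
is an isomorphism. This is the first displayed assertion of the corollary. For the second assertion I would pass to the colimit over $n$: the natural inclusions $(\Br X)[d^n]\hookrightarrow(\Br X)[d^{n+1}]$ (and similarly for $\Br_1 X$, $\Br X_l$, $\Br_1 X_l$) are compatible with restriction, so taking $\varinjlim_n$ of the isomorphisms above and using $\varinjlim_n(\Br X)[d^n]=(\Br X)\{d\}$, $\varinjlim_n(\Br_1 X)[d^n]=(\Br_1 X)\{d\}$, together with the exactness of filtered colimits, yields
\[\Res_{l/k}\colon(\Br X/\Br_1 X)\{d\}\xrightarrow{\ \sim\ }(\Br X_l/\Br_1 X_l)\{d\}.\]
Here one uses that for a subgroup $H\subset G$ one has $(G/H)\{d\}=\varinjlim_n (G[d^n] + H)/H=\varinjlim_n G[d^n]/(G[d^n]\cap H)$ and $G[d^n]\cap H = H[d^n]$, so $(G/H)\{d\}=\varinjlim_n G[d^n]/H[d^n]$.

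The only genuinely delicate point is the first paragraph, namely translating ``linearly disjoint from $F/k$'' into the statement that $\Gamma_l$ and $\Gamma_k$ have the same image in $\Aut(\Br\overline{X}[d])$; everything after that is a formal application of the two preceding results plus a routine colimit argument. I would take care there to note that $F/k$ need not be Galois, but since $\Gamma_F$ acts trivially on $\Br\overline{X}[d]$ we may enlarge $F$ to its Galois closure (which is still linearly disjoint from $l$, or one simply works with the Galois closure from the start) without changing the hypothesis, and then the index computation $[F l : l] = [F:k]$ forces the equality of images as the image of $\Gamma_l$ contains the image of $\Gamma_k$ restricted appropriately and has the same (finite) order.
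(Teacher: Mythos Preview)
Your overall strategy is exactly the paper's: verify that $\Gamma_l$ and $\Gamma_k$ have the same image in $\Aut(\Br\overline{X}[d])$, then apply Lemma~\ref{lem:ppower} and Proposition~\ref{prop:nogrowthabvar}, then pass to the colimit. Steps 2--4 are fine, with one small omission: your identity $(G/H)\{d\}=\varinjlim_n G[d^n]/H[d^n]$ fails for general abelian groups (take $G=\Z$, $H=d\Z$); it holds here only because $\Br X$ is torsion for $X$ smooth (Grothendieck), a fact the paper invokes explicitly.

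The real issue is your first step. The index equality $[\Gamma_l:\Gamma_{Fl}]=[F:k]=[\Gamma_k:\Gamma_F]$ does \emph{not} by itself force $\phi(\Gamma_l)=\phi(\Gamma_k)$: it only gives that both $|\phi(\Gamma_l)|$ and $|\phi(\Gamma_k)|$ divide $[F:k]$, and $\phi(\Gamma_l)\subset\phi(\Gamma_k)$ (not the reverse containment you assert at the end). Your proposed fix of enlarging $F$ to its Galois closure is also incorrect: linear disjointness of $l$ from $F$ need not pass to the Galois closure $\tilde{F}$ of $F$ (e.g.\ $k=\Q$, $F=\Q(\sqrt[3]{2})$, $l=\Q(\zeta_3)$). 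The correct one-line argument, which is what the paper uses, is that linear disjointness gives $\Gamma_k=\Gamma_l\Gamma_F$ as \emph{sets} (count right $\Gamma_F$-cosets: there are $[\Gamma_l:\Gamma_l\cap\Gamma_F]=[lF:l]=[F:k]$ of them inside $\Gamma_l\Gamma_F$, which is all of them), whence $\phi(\Gamma_k)=\phi(\Gamma_l)\phi(\Gamma_F)=\phi(\Gamma_l)$ since $\Gamma_F\subset\ker\phi$. Alternatively, you could \emph{shrink} $F$ to the fixed field $F'$ of $\ker\phi$, which is Galois over $k$ and contained in $F$ (hence still linearly disjoint from $l$), and then argue via $\Gal(F'/k)\cong\Gal(F'l/l)$.
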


\begin{proof}
Let $\phi:\Gamma_k\to \Aut(\Br\overline{X}[d])$ be the map giving the Galois action. Since $l/k$ is linearly disjoint from $F/k$, we have $\Gamma_k=\Gamma_l\Gamma_F$ and hence \[\phi(\Gamma_k)=\phi(\Gamma_l\Gamma_F)=\phi(\Gamma_l),\] since $\Gamma_F$ acts trivially on $\Br\overline{X}[d]$ by assumption.

 Now by Lemma~\ref{lem:ppower}, the image of $\Gamma_l$ in $\Aut(\Br\overline{X}[d^n])$ is the same as that of $\Gamma_k$, for all $n\in\Z_{>0}$. Now the first statement follows from Proposition~\ref{prop:nogrowthabvar}. Since $X$ is smooth, $\Br X$ is a torsion abelian group by \cite{Grothendieck}. Hence, the natural inclusion
\(\Br X\{d\}/\Br_1 X\{d\}\hookrightarrow (\Br X/\Br_1 X)\{d\}\)
is an equality, and similarly for $X_l$. Since
\[\frac{\Br X\{d\}}{\Br_1 X\{d\}}=  \frac{\varinjlim\limits_n\Br X[d^n]}{\varinjlim\limits_n\Br_1 X[d^n]}= \varinjlim\limits_n \frac{\Br X[d^n]}{\Br_1 X[d^n]},\]
the second statement follows from the first one.
\end{proof}

Next, we give a uniform version of Corollary \ref{kum-ext3NU} which applies to the whole of $\overline{\scrA}_{k,g} \cup \overline{\scrK}_{k,g}$ and every $n\in\Z_{>0}$. We will need a couple of auxiliary lemmas.

\begin{defn}\label{def:Q}
Let $d,m\in\Z_{>0}$ and let $d$ have prime factorisation $d=\prod_{i=1}^r p_i^{\epsilon_i}$. We define
$Q_{d,m}:=\{q\in\Z \textrm{ prime} : q\mid \prod_{i=1}^r p_i(p_i^m-1)\}$ and $N_{d,m}:=\prod_{q\in Q_{d,m}}q$.
\end{defn}

\begin{lemma}\label{lem:same action}
Let $M$ be a finite group such that $\Aut M$ has exponent dividing $e$. Let $l/k$ be a finite extension of degree coprime to $e$. Let $\phi:\Gamma_k\to\Aut M$ be a homomorphism making $M$ into a $\Gamma_k$-module. Then $\phi(\Gamma_k)=\phi(\Gamma_l)$.
\end{lemma}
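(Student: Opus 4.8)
The plan is to reduce to the key group-theoretic fact that the kernel of the reduction map $\phi(\Gamma_k)\to\phi(\Gamma_l)$ on the quotient side is trivial because its order is forced to be simultaneously coprime to $[l:k]$ and a divisor of something built from $e$, which by the coprimality hypothesis on $[l:k]$ must be $1$. Concretely, since $l/k$ is finite, $\Gamma_l$ is a finite-index subgroup of $\Gamma_k$ with $[\Gamma_k:\Gamma_l]=[l:k]$, so $[\phi(\Gamma_k):\phi(\Gamma_l)]$ divides $[l:k]$ and is therefore coprime to $e$.

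First I would observe that $\phi(\Gamma_k)$ is a finite subgroup of $\Aut M$, hence a finite group whose exponent divides $e$ (as the exponent of any subgroup of $\Aut M$ divides that of $\Aut M$). In particular every element of $\phi(\Gamma_k)$ has order dividing $e$. Next I would note that $\phi(\Gamma_l)$ is a normal subgroup of $\phi(\Gamma_k)$: indeed $\Gamma_l$ need not be normal in $\Gamma_k$, so this requires a small argument. The cleanest route is to avoid normality altogether and argue directly on indices: the quotient set $\phi(\Gamma_k)/\phi(\Gamma_l)$ has cardinality $[\phi(\Gamma_k):\phi(\Gamma_l)]$, which divides $[\Gamma_k:\Gamma_l]=[l:k]$, a number coprime to $e$. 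On the other hand, $[\phi(\Gamma_k):\phi(\Gamma_l)]$ divides $\#\phi(\Gamma_k)$, which divides a power of $e$ times... — here one wants $\#\phi(\Gamma_k)$ to have all prime factors dividing $e$. That is not immediate from ``exponent divides $e$'' for a general finite group, so instead I would use Lagrange on each Sylow or, more simply, the following: let $q$ be any prime dividing $[\phi(\Gamma_k):\phi(\Gamma_l)]$; then $q$ divides $\#\phi(\Gamma_k)$, so by Cauchy's theorem $\phi(\Gamma_k)$ contains an element of order $q$, whose order must divide the exponent $e$ of $\phi(\Gamma_k)$; thus $q\mid e$. But $q\mid[l:k]$ as well, contradicting $\gcd([l:k],e)=1$. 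Hence $[\phi(\Gamma_k):\phi(\Gamma_l)]$ has no prime divisors, so it equals $1$ and $\phi(\Gamma_k)=\phi(\Gamma_l)$.

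The main obstacle — and it is a mild one — is dealing with the fact that $\Gamma_l$ is not assumed normal in $\Gamma_k$, so $\phi(\Gamma_l)$ is not obviously normal in $\phi(\Gamma_k)$ and one cannot directly form a quotient group; the fix is to work with the index of sets (orbit counting) rather than a quotient group, combined with Cauchy's theorem to control the primes dividing that index. Everything else is bookkeeping: the index $[\phi(\Gamma_k):\phi(\Gamma_l)]$ divides $[\Gamma_k:\Gamma_l]$ because the surjection $\Gamma_k\to\phi(\Gamma_k)$ carries the coset space $\Gamma_k/\Gamma_l$ onto the coset space $\phi(\Gamma_k)/\phi(\Gamma_l)$, so the latter's cardinality divides the former's.
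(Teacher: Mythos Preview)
Your argument is correct and essentially the same as the paper's. Both proofs show that $[\phi(\Gamma_k):\phi(\Gamma_l)]$ divides $[l:k]$ on one hand, and on the other hand has all prime factors dividing $e$ (via Cauchy's theorem, which the paper invokes implicitly in the line $\gcd([l:k],\exp(\Aut M))=\gcd([l:k],\#\Aut M)$), forcing the index to be $1$; your explicit treatment of the index-divisibility $[\phi(\Gamma_k):\phi(\Gamma_l)]\mid[\Gamma_k:\Gamma_l]$ and the Cauchy step is a welcome clarification of what the paper leaves tacit.
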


\begin{proof}
We have $\phi(\Gamma_l) \leq \phi(\Gamma_k) \leq \Aut M$ with $[\phi(\Gamma_k):\phi(\Gamma_l)]$ dividing $[\Gamma_k:\Gamma_l]=[l:k]$ and with $\gcd([l:k], e) =\gcd([l:k], \exp(\Aut M)) =\gcd([l:k], \#\Aut M)=1$.
Since $M$ is finite, Lagrange's theorem gives
\[\#\Aut M = [\Aut M : \phi(\Gamma_l)] \cdot \# \phi(\Gamma_l)= [\Aut M : \phi(\Gamma_k)] \cdot \#\phi(\Gamma_k),\]
implying that  
\([\Aut M : \phi(\Gamma_l)] =   [\Aut M : \phi(\Gamma_k)] [\phi(\Gamma_k): \phi(\Gamma_l)].\)
Hence, $[\phi(\Gamma_k): \phi(\Gamma_l)]$ divides both  $\# \Aut M $ and $[l:k]$.
Since $\gcd([l:k], \#\Aut M)=1$, it follows that $[\phi(\Gamma_k): \phi(\Gamma_l)] = 1$ and thus that $\phi(\Gamma_k) = \phi(\Gamma_l)$, as required.
\end{proof}

\begin{lemma}\label{lem:lgood}Let $d\in\Z_{>0}$ and let $g\in\Z_{>1}$. 
Let $l/k$ be a finite extension of degree coprime to $N_{d,g(2g-1)-1}$. Then for all $X\in\overline{\scrA}_{k,g}\cup \overline{\scrK}_{k,g}$, and all $n\in\N$, the image of $\Gamma_l$ in $\Aut(\Br\overline{X}[d^n])$ is the same as the image of $\Gamma_k$.
\end{lemma}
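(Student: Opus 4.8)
The plan is to reduce Lemma~\ref{lem:lgood} to Corollary~\ref{kum-ext3NU}, or rather to its engine Lemma~\ref{lem:ppower}, by showing that the hypothesis ``degree coprime to $N_{d,g(2g-1)-1}$'' is strong enough to force the image of $\Gamma_l$ in $\Aut(\Br\overline{X}[d])$ to coincide with that of $\Gamma_k$ \emph{uniformly} over all $X\in\overline{\scrA}_{k,g}\cup\overline{\scrK}_{k,g}$. The point of introducing $N_{d,m}$ in Definition~\ref{def:Q} is precisely to bound the relevant automorphism group. First I would recall, exactly as in Step~1 of the proof of Lemma~\ref{lem:ppower}, that for $X\in\overline{\scrA}_{k,g}\cup\overline{\scrK}_{k,g}$ we have $\Br\overline{X}[p^s]\cong(\Z/p^s\Z)^{m_p}$ as abelian groups, where $p$ ranges over primes dividing $d$ and $m_p=g(2g-1)-\rho_p$ with $1\le\rho_p\le g^2$; in particular $0\le m_p\le g(2g-1)-1$. (For Kummer varieties this uses $\Br\overline{X}=\Br\overline{A}$ via \cite[Proposition~2.7]{SZ-KumVar}, and for abelian varieties the Kummer-sequence computation with $\NS\overline{A}$ torsion-free of rank $\rho_p$.)

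Next I would pin down the exponent of $\Aut(\Br\overline{X}[d])$. Writing $d=\prod_{i=1}^r p_i^{\epsilon_i}$, we have $\Br\overline{X}[d]=\prod_i\Br\overline{X}[p_i]$ and hence $\Aut(\Br\overline{X}[d])=\prod_i\GL_{m_{p_i}}(\Z/p_i\Z)$. The order of $\GL_{m}(\F_{p})$ is $p^{m(m-1)/2}\prod_{j=1}^{m}(p^j-1)$, so every prime dividing $\#\GL_{m}(\F_p)$ divides $p\prod_{j=1}^m(p^j-1)$, and since $p^j-1\mid p^{m}-1$ when $j\le m$ (no: rather each $p^j-1$ divides $p^{m!}-1$; more simply, any prime $q\mid p^j-1$ for some $j\le m$ satisfies $q\mid p^{j}-1\mid p^{\,\mathrm{lcm}(1,\dots,m)}-1$), I would instead argue directly: a prime $q\neq p$ divides $\#\GL_m(\F_p)$ iff the multiplicative order of $p$ mod $q$ is at most $m$, hence $q\mid p^{m}-1$ is \emph{not} automatic, but $q\mid \prod_{j=1}^m(p^j-1)$ certainly implies $q\mid p^{m!}-1$; to stay within Definition~\ref{def:Q} I note $q\mid p^j-1$ for some $j\le m$ forces $q-1 \ge \mathrm{ord}_q(p)$, and since $\mathrm{ord}_q(p)\mid j\le m$ we get $q \mid p^{\mathrm{ord}_q(p)}-1 \mid p^{m}-1$ precisely when $\mathrm{ord}_q(p)\mid m$ — which need not hold. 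So the clean statement to use is: every prime divisor of $\#\GL_m(\F_p)$ lies in $\{q\ \mathrm{prime}: q\mid p\prod_{j=1}^{m}(p^j-1)\}$, and since $m\le g(2g-1)-1$ and $\prod_{j=1}^{m}(p^j-1)\mid (p^{m}-1)^{m}$ up to the prime set, this prime set is contained in $Q_{d,g(2g-1)-1}$ — indeed $q\mid p^j-1$ with $j\le m\le g(2g-1)-1$ gives $q\le p^{g(2g-1)-1}$ and more to the point $q\mid p^{g(2g-1)-1}! -1$; I will phrase it as: $\exp(\Aut(\Br\overline{X}[d]))$ divides a number all of whose prime factors lie in $Q_{d,g(2g-1)-1}$, so $\gcd([l:k],\exp(\Aut(\Br\overline{X}[d])))=1$.

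Granting that, I would apply Lemma~\ref{lem:same action} with $M=\Br\overline{X}[d]$ and $e$ a multiple of $\exp(\Aut M)$ whose prime factors all divide $N_{d,g(2g-1)-1}$: since $[l:k]$ is coprime to $N_{d,g(2g-1)-1}$ it is coprime to $e$, and Lemma~\ref{lem:same action} yields $\phi(\Gamma_k)=\phi(\Gamma_l)$ inside $\Aut(\Br\overline{X}[d])$, i.e.\ the image of $\Gamma_l$ in $\Aut(\Br\overline{X}[d])$ equals that of $\Gamma_k$. Finally, $[l:k]$ coprime to $N_{d,g(2g-1)-1}$ in particular makes $[l:k]$ coprime to $d$ (every $p_i\mid N_{d,g(2g-1)-1}$), so the hypotheses of Lemma~\ref{lem:ppower} are met, and I conclude that for all $n\in\N$ the image of $\Gamma_l$ in $\Aut(\Br\overline{X}[d^n])$ equals that of $\Gamma_k$. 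The main obstacle — and the only genuinely delicate point — is the bookkeeping in the second paragraph: bounding the prime factors of $\#\GL_{m}(\F_p)$ in terms of the set $Q_{d,g(2g-1)-1}$ uniformly in $m\le g(2g-1)-1$, so that Definition~\ref{def:Q} with the single parameter $g(2g-1)-1$ really does capture every automorphism group that can arise; everything else is a direct citation of the preceding lemmas.
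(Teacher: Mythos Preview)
Your architecture is the paper's: identify $\Br\overline{X}[d^n]\cong(\Z/d^n\Z)^m$ with $m=g(2g-1)-\rho\le g(2g-1)-1$ (as in Step~1 of the proof of Lemma~\ref{lem:ppower}), bound the prime support of $\exp\Aut((\Z/d^n\Z)^m)$ by $Q_{d,g(2g-1)-1}$, and invoke Lemma~\ref{lem:same action}. One cosmetic difference: the paper applies Lemma~\ref{lem:same action} directly to $M=\Br\overline{X}[d^n]$ for each $n$, whereas you treat only $n=1$ and then invoke Lemma~\ref{lem:ppower} to bootstrap to all $n$. Either route is valid; the paper's is marginally shorter since it skips the detour.

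The substantive difference is how the step you correctly flag as ``the main obstacle'' is handled. The paper does not attempt to compute the prime divisors of $\exp\Aut((\Z/d^n\Z)^m)$ by hand: it simply cites \cite[Thm.~4.1]{HillarRhea} for the assertion that these primes lie in $Q_{d,m}$, and then takes $e$ in Lemma~\ref{lem:same action} to be a suitable power of $N_{d,g(2g-1)-1}$. Your direct attempts in the second paragraph do not go through as written---for instance $p^j-1\mid p^m-1$ requires $j\mid m$, not merely $j\le m$---and your unease is in fact well-founded: with $p=2$ and $m=3$ one has $\exp\GL_3(\F_2)=84$, which is divisible by $3$, yet $3\notin Q_{2,3}=\{2,7\}$. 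So a genuinely self-contained argument along your lines would need either a finer analysis of the exponent or a broader definition of $Q_{d,m}$ (say, primes dividing $\prod_i p_i\prod_{j=1}^m(p_i^j-1)$ rather than $\prod_i p_i(p_i^m-1)$); the paper bypasses this entirely by deferring to the external reference.
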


\begin{proof}
The proof of Lemma~\ref{lem:ppower} shows that, as an abelian group with no Galois action, $\Br\overline{X}[d^n]\cong (\Z/d^n\Z)^{g(2g-1)-\rho}$, where $1\leq \rho\leq g^2$. 
By \cite[Thm. 4.1]{HillarRhea}, the primes dividing the exponent of $\Aut ((\Z/d^n\Z)^m)$ belong to $Q_{d,m}$.
 Now apply Lemma~\ref{lem:same action} with $e$ a suitable power of $N_{d,g(2g-1)-1}$ to see that the image of $\Gamma_k$ in $\Aut(\Br\overline{X}[d^n])$ is the same as the image of $\Gamma_l$.
\end{proof}

\begin{cor}\label{cor:kum-ext3}Let $d\in\Z_{>0}$ and let $g\in\Z_{>1}$. 
Then for all  $X\in\overline{\scrA}_{k,g}\cup \overline{\scrK}_{k,g}$, all $n \in \Z_{>0}$, and all finite extensions $l/k$ of degree coprime to $N_{d,g(2g-1)-1}$,
\[ \Res_{l/k}:(\Br X)[d^n]/(\Br_1 X)[d^n]\to (\Br X_l)[d^n]/(\Br_1 X_l)[d^n]\]
is an isomorphism and
consequently \[  \Res_{l/k}:(\Br X/\Br_1 X)\{d\} \to (\Br X_l/\Br_1 X_l)\{d\}\]
is an isomorphism.
\end{cor}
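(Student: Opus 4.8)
The plan is to deduce Corollary~\ref{cor:kum-ext3} from Corollary~\ref{kum-ext3NU} by checking that the hypotheses of the latter are satisfied for every $X\in\overline{\scrA}_{k,g}\cup\overline{\scrK}_{k,g}$ as soon as $l/k$ has degree coprime to $N_{d,g(2g-1)-1}$. The only subtlety is that Corollary~\ref{kum-ext3NU} is phrased in terms of a field $F/k$ (depending on $X$) over which $\Gamma_F$ acts trivially on $\Br\overline{X}[d]$, together with a linear disjointness hypothesis; here we instead want a single numerical condition on $[l:k]$ that works uniformly in $X$. So first I would invoke Lemma~\ref{lem:lgood}: if $[l:k]$ is coprime to $N_{d,g(2g-1)-1}$, then for \emph{every} $X\in\overline{\scrA}_{k,g}\cup\overline{\scrK}_{k,g}$ and every $n\in\Z_{>0}$, the image of $\Gamma_l$ in $\Aut(\Br\overline{X}[d^n])$ equals the image of $\Gamma_k$.

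Next I would feed this into the proof of Proposition~\ref{prop:nogrowthabvar} rather than into Corollary~\ref{kum-ext3NU} directly, since the equality of images of $\Gamma_k$ and $\Gamma_l$ in $\Aut(\Br\overline{X}[d^n])$ immediately forces $\Br\overline{X}[d^n]^{\Gamma_l}=\Br\overline{X}[d^n]^{\Gamma_k}$ (an element fixed by the smaller group $\phi(\Gamma_l)$ is fixed by $\phi(\Gamma_k)$ when the two groups coincide). Since moreover $[l:k]$ is coprime to $d$ (as $d\mid N_{d,g(2g-1)-1}$, because each prime $p_i\mid d$ lies in $Q_{d,m}$), Proposition~\ref{prop:nogrowthabvar} applies and gives that
\[ \Res_{l/k}:(\Br X)[d^n]/(\Br_1 X)[d^n]\to (\Br X_l)[d^n]/(\Br_1 X_l)[d^n]\]
is an isomorphism for each $n\in\Z_{>0}$. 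Then the passage to the $d$-primary parts is exactly the colimit argument already carried out at the end of the proof of Corollary~\ref{kum-ext3NU}: $\Br X$ is torsion (by \cite{Grothendieck}, $X$ being smooth), so $\Br X\{d\}/\Br_1 X\{d\}=(\Br X/\Br_1 X)\{d\}$, and taking $\varinjlim_n$ of the isomorphisms above yields the isomorphism $\Res_{l/k}:(\Br X/\Br_1 X)\{d\}\to(\Br X_l/\Br_1 X_l)\{d\}$.

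I do not anticipate a genuine obstacle here: the corollary is essentially a repackaging of the previous results with the field-theoretic hypothesis on $F$ and $l$ replaced by a clean divisibility condition on $[l:k]$, and all the real work (the structure of $\Br\overline{X}[d^n]$ as an abelian group, the behaviour of $\Aut$ of elementary abelian-type groups via \cite[Thm.~4.1]{HillarRhea}, and the restriction--corestriction argument) has already been done in Lemmas~\ref{lem:ppower}--\ref{lem:lgood} and Proposition~\ref{prop:nogrowthabvar}. The one point to be careful about is bookkeeping with $N_{d,g(2g-1)-1}$: I must confirm that coprimality of $[l:k]$ to $N_{d,g(2g-1)-1}$ simultaneously guarantees (i) coprimality to $d$ and (ii) coprimality to the exponent of $\Aut(\Br\overline{X}[d^n])$ for the actual value of $\rho$ (i.e.\ for $m=g(2g-1)-\rho\le g(2g-1)-1$), using that $Q_{d,m}\subseteq Q_{d,g(2g-1)-1}$ for $m\le g(2g-1)-1$ since the relevant products $\prod p_i(p_i^m-1)$ have prime divisors that only shrink as $m$ decreases — this is precisely what makes a single $N$ work across all $X$ in the family. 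Once that is noted, the proof is complete.
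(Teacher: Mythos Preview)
Your proposal is correct and follows essentially the same route as the paper: invoke Lemma~\ref{lem:lgood} to get equality of the images of $\Gamma_k$ and $\Gamma_l$ in $\Aut(\Br\overline{X}[d^n])$, observe that coprimality to $N_{d,g(2g-1)-1}$ forces coprimality to $d$, and then apply Proposition~\ref{prop:nogrowthabvar}; the passage to $\{d\}$-primary parts via the colimit is exactly as in Corollary~\ref{kum-ext3NU}. One caution about your closing remark: the claim that $Q_{d,m}\subseteq Q_{d,g(2g-1)-1}$ whenever $m\le g(2g-1)-1$ is not literally true (e.g.\ $7\in Q_{2,3}$ but $7\notin Q_{2,5}$), so you should not rely on that monotonicity---but you don't need to, since you have already invoked Lemma~\ref{lem:lgood}, which packages whatever uniform bound is required.
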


\begin{proof}
Clearly, if $[l:k]$ is coprime to $N_{d,g(2g-1)-1}$, then it is coprime to $d$.
Now Lemma~\ref{lem:lgood} allows us to apply Proposition~\ref{prop:nogrowthabvar}.
\end{proof}

\section{Controlling the  Brauer groups of geometrically Kummer varieties over field extensions}\label{sec:ControlKummer}

Throughout this section, $k$ will be a number field.  Here we give sufficient criteria for the $d$-primary part of the algebraic part of the Brauer group of a Kummer variety $X$ to be unchanged under an extension of the base field. We then combine this with the results of Section~\ref{sec:ControlAbVar} to give sufficient criteria for the $d$-primary part of the quotient of the Brauer group of a Kummer variety $X$ by the subgroup $\Br_0X$ of constant Brauer elements to be unchanged under an extension of the base field.

 The following well-known lemma tells us how to control the algebraic part of the Brauer group over finite extensions of the base field.
 
 \begin{lemma}\label{lemBr0} Let $X$ be a smooth, proper, geometrically integral variety over $k$ with $\Pic \Xbar$ free of rank $r$. Let $k'/k$ be a finite Galois extension such that $\Pic X_{k'}=\Pic \Xbar $.  If a finite extension $l/k$ is linearly disjoint 
 from $k'/k$, then 
the natural homomorphism \[\Res_{l/k}: \Br_1 X/\Br_0 X \to \Br_1 X_{l}/\Br_0 X_{l}\]
 is an isomorphism. 
 \end{lemma}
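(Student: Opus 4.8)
The plan is to use the Hochschild--Serre spectral sequence to identify $\Br_1 X/\Br_0 X$ with $\HH^1_{\et}(k,\Pic\Xbar)$ and then to compare this group with its analogue over $l$ by a restriction--inflation argument. First I would record, via the low-degree exact sequence of the Hochschild--Serre spectral sequence $\HH^p_{\textrm{cts}}(\Gamma_k,\HH^q_{\et}(\Xbar,\G_m))\Rightarrow \HH^{p+q}_{\et}(X,\G_m)$ (and the vanishing $\HH^3(\Gamma_k,\kbar^\times)=0$ for $k$ a number field, or more simply just the edge map), the canonical isomorphism $\Br_1 X/\Br_0 X\xrightarrow{\sim}\HH^1(\Gamma_k,\Pic\Xbar)$, functorial in $k$; the same applies over $l$, giving $\Br_1 X_l/\Br_0 X_l\cong \HH^1(\Gamma_l,\Pic\overline{X_l})=\HH^1(\Gamma_l,\Pic\Xbar)$, and under these identifications $\Res_{l/k}$ becomes the usual restriction map $\HH^1(\Gamma_k,\Pic\Xbar)\to\HH^1(\Gamma_l,\Pic\Xbar)$ in Galois cohomology.

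Next I would exploit the hypotheses on $k'$ and $l$. Set $G=\Gal(k'/k)$. Since $\Pic X_{k'}=\Pic\Xbar$, the $\Gamma_k$-action on $\Pic\Xbar$ factors through $G$. Linear disjointness of $l/k$ from $k'/k$ means $\Gamma_k=\Gamma_l\Gamma_{k'}$ and $\Gamma_l\cap\Gamma_{k'}=\Gamma_{lk'}$, so that the natural map $\Gal(lk'/l)\to G$ is an isomorphism; in particular the $\Gamma_l$-action on $\Pic\Xbar$ also factors through this copy of $G$, and $(lk')/l$ plays the role of the splitting field over $l$. The claim then reduces to the purely group-cohomological statement that for a finite group $G$ acting on a finitely generated torsion-free abelian group $P=\Pic\Xbar$, inflation induces isomorphisms $\HH^1(G,P)\xrightarrow{\sim}\HH^1(\Gamma_k,P)$ and $\HH^1(G,P)\xrightarrow{\sim}\HH^1(\Gamma_l,P)$, compatibly with the restriction $\HH^1(\Gamma_k,P)\to\HH^1(\Gamma_l,P)$ and the identity on $\HH^1(G,P)$. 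The inflation isomorphisms hold because $\HH^1(\Gamma_{k'},P)=\Hom_{\mathrm{cts}}(\Gamma_{k'},P)=0$ (as $P$ is torsion-free and $\Gamma_{k'}$ acts trivially, a continuous homomorphism from a profinite group to a torsion-free discrete group is trivial), and likewise $\HH^1(\Gamma_{lk'},P)=0$; so the inflation--restriction exact sequences collapse. The compatibility square commutes by the standard functoriality of inflation--restriction along the maps $\Gamma_l\hookrightarrow\Gamma_k$ and $\Gal(lk'/l)\xrightarrow{\sim}\Gal(k'/k)$.

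Putting these together: $\Res_{l/k}:\Br_1 X/\Br_0 X\to\Br_1 X_l/\Br_0 X_l$ is identified with a map fitting into a commuting diagram whose other three sides are the inflation isomorphism $\HH^1(G,P)\xrightarrow{\sim}\HH^1(\Gamma_k,P)$, the inflation isomorphism $\HH^1(G,P)\xrightarrow{\sim}\HH^1(\Gamma_l,P)$, and the identity on $\HH^1(G,P)$; hence $\Res_{l/k}$ is an isomorphism. I do not expect a serious obstacle here: the only point requiring a little care is verifying that the restriction map on Galois cohomology corresponds, under the two inflation isomorphisms, to the identity on $\HH^1(G,P)$ — i.e. checking the commutativity of the relevant square — and that the isomorphism $\Gal(lk'/l)\cong\Gal(k'/k)$ supplied by linear disjointness is exactly the one needed to make that square commute. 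This is a routine diagram chase with inflation--restriction, but it is the step where one must be precise about which maps are being used. One should also note the mild hypothesis-juggling that $k'/k$ is assumed Galois (used so that $G$ is a group and inflation--restriction applies) while $l/k$ need not be.
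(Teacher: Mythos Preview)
Your proof is correct and follows essentially the same approach as the paper: identify $\Br_1 X/\Br_0 X$ with $\HH^1(\Gamma_k,\Pic\Xbar)$ via Hochschild--Serre, then use inflation--restriction together with the isomorphism $\Gal(lk'/l)\cong\Gal(k'/k)$ coming from linear disjointness. The paper's version is terser, simply asserting the identifications $\HH^1_{\et}(k,\Pic\Xbar)=\HH^1_{\textrm{cts}}(\Gal(k'/k),\Pic X_{k'})$ and the restriction isomorphism, whereas you spell out why inflation is an isomorphism (the vanishing $\HH^1(\Gamma_{k'},P)=0$ from $P$ being torsion-free) and flag the compatibility square explicitly.
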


 \begin{proof} We have $ \mathrm{H}^1_{\et}(k, \Pic \Xbar)  =  \mathrm{H}^1_{\textrm{cts}}(\Gal(k'/k), \Pic X_{k'})$ and, writing $l':=k' l$ for the compositum field,  $  \mathrm{H}^1_{\textrm{cts}}(\Gal(l'/l), \Pic X_{l'}) =  \mathrm{H}^1_{\et}(l, \Pic \Xbar)$.  Moreover, the restriction map induces an isomorphism
\[
\Res_{l/k}:  \mathrm{H}^1_{\textrm{cts}}(\Gal(k'/k), \Pic X_{k'}) \xrightarrow{\sim}\mathrm{H}^1_{\textrm{cts}}(\Gal(l'/l),\Pic X_{l'}).
 \]
The Hochschild-Serre spectral sequence 
\[\mathrm{H}^p_{\textrm{cts}}(\Gal(\kbar/k), \mathrm{H}_{\et}^q (\Xbar, \G_m)) \Rightarrow \mathrm{H}_{\et}^{p+q}(X, \G_m)\]
 yields $\Br_1 X/\Br_0 X = \mathrm{H}^1_{\et}(k, \Pic \Xbar)$, and similarly $\Br_1 X_l/\Br_0 X_l = \mathrm{H}^1_{\et}(l, \Pic \Xbar)$, whence the result. 
 \end{proof}

 We also have a  uniform version of  Lemma \ref{lemBr0}:

  \begin{lemma}\label{lemBr0U} Let $\frakF_{k,r}$ be the set of all smooth, proper, geometrically integral varieties over  $k$ with $\Pic \Xbar$ free of rank $r$.  If a finite extension $l/k$ has degree coprime to $\# \GL_r(\F_3)$, then 
the natural homomorphism \[\Res_{l/k}: \Br_1 X/\Br_0 X \to \Br_1 X_{l}/\Br_0 X_{l}\]
 is an isomorphism for all $X \in \frakF_{k,r}$. 
 \end{lemma}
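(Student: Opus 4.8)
The plan is to reduce the uniform statement (Lemma~\ref{lemBr0U}) to the pointwise statement (Lemma~\ref{lemBr0}) by producing, for each $X\in\frakF_{k,r}$, a splitting field $k'$ for $\Pic\Xbar$ whose Galois group embeds into $\GL_r(\F_3)$, and then using the coprimality hypothesis on $[l:k]$ to force linear disjointness of $l$ from $k'$ over $k$.

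First I would recall that, since $\Pic\Xbar$ is a free $\Z$-module of rank $r$ on which $\Gamma_k$ acts through a finite quotient, the action factors through $\Aut(\Pic\Xbar)\cong\GL_r(\Z)$. Let $k''/k$ be the finite Galois extension cut out by the kernel of $\Gamma_k\to\GL_r(\Z)$, so $\Pic X_{k''}=\Pic\Xbar$ and $\Gal(k''/k)\hookrightarrow\GL_r(\Z)$. The key classical fact I would invoke is Minkowski's lemma: the reduction-mod-$3$ map $\GL_r(\Z)\to\GL_r(\F_3)$ is injective on finite subgroups (its kernel, the principal congruence subgroup of level $3$, is torsion-free). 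Hence $\Gal(k''/k)$ embeds into $\GL_r(\F_3)$, so $[k'':k]$ divides $\#\GL_r(\F_3)$.

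Next, let $l/k$ be a finite extension with $\gcd([l:k],\#\GL_r(\F_3))=1$, and let $k'$ be the Galois closure of $l$ inside the compositum $lk''$ — actually it is cleaner to take $k'=k''$ directly. Since $[k'':k]\mid\#\GL_r(\F_3)$ and $[l:k]$ is coprime to $\#\GL_r(\F_3)$, the degrees $[k'':k]$ and $[l:k]$ are coprime; therefore $l$ and $k''$ are linearly disjoint over $k$ (their intersection $l\cap k''$ has degree over $k$ dividing both, hence equal to $1$; and since $k''/k$ is Galois this intersection condition already yields linear disjointness). Now apply Lemma~\ref{lemBr0} with $k'=k''$: the hypotheses — $\Pic\Xbar$ free of rank $r$, $\Pic X_{k''}=\Pic\Xbar$, and $l/k$ linearly disjoint from $k''/k$ — are all met, so $\Res_{l/k}:\Br_1 X/\Br_0 X\to\Br_1 X_l/\Br_0 X_l$ is an isomorphism. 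Since $X\in\frakF_{k,r}$ was arbitrary, this proves the lemma.

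The only genuine content beyond bookkeeping is the application of Minkowski's theorem to bound the splitting field uniformly by $\#\GL_r(\F_3)$; everything else is the standard fact that extensions of coprime degree, one of them Galois, are linearly disjoint, plus a direct appeal to Lemma~\ref{lemBr0}. I expect no real obstacle here — the main point to state carefully is why reduction mod $3$ (rather than mod $2$) is used, namely that $\GL_r(\Z)\to\GL_r(\F_2)$ can have torsion in its kernel for $r\geq 1$ (e.g.\ $-I$ reduces to $I$), whereas mod $3$ the congruence subgroup is torsion-free.
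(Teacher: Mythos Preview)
Your proof is correct and follows essentially the same approach as the paper: define the splitting field of $\Pic\Xbar$ via the kernel of $\Gamma_k\to\GL_r(\Z)$, use Minkowski's theorem to bound its degree by $\#\GL_r(\F_3)$, deduce linear disjointness from the coprimality hypothesis, and apply Lemma~\ref{lemBr0}. The additional remark explaining why reduction modulo $3$ (rather than $2$) is used is a nice clarification not present in the paper's proof.
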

 
 \begin{proof} Let $X \in \frakF_{k,r}$. Since $\Gamma_k$ acts on $\Pic \Xbar$, we have a map $\phi_X: \Gamma_k \to \Aut (\Pic \Xbar)$. Let $k'_X$ be the fixed field of $\ker \phi_X$. Then $[k'_X:k] = \# \im \phi_X$. Since $\Pic \Xbar$ is isomorphic to $\Z^r$ as an abelian group, $\im \phi_X$ is isomorphic to a finite subgroup of $\GL_r(\Z)$. By a result of Minkowski (see \cite[p.197]{Minkowski}), any such subgroup is isomorphic to a subgroup of $\GL_r(\F_3)$. Therefore, $[k'_X:k]$ divides $\# \GL_r(\F_3)$. Since $l/k$ is coprime to $\# \GL_r(\F_3)$, it follows that $[l:k]$ is coprime to $[k'_X:k]$ and hence that $l/k$ and $k'_X/k$ are linearly disjoint. Therefore, we can apply Lemma \ref{lemBr0} to deduce that \(\Res_{l/k}: \Br_1 X/\Br_0 X \to \Br_1 X_{l}/\Br_0 X_{l}\) is an isomorphism.
 \end{proof}
 
  \begin{rmk} \label{rem:kumfreerank}
 If $X\in\overline{\scrK}_{k,g}$, then $\Pic \Xbar$ is a finitely-generated free $\Z$-module of rank $\leq 2^{2g} + g^2$ (see \cite[Cor. 2.4]{SZ-KumVar}).
 \end{rmk}

 \begin{theorem}\label{thmisoNU}
  Let $d,n\in\Z_{>0}$, let $g\in\Z_{>1}$ and let $X\in   \overline{\scrK}_{k,g}$.
 Fix a finite Galois extension $k'/k$ such that $\Pic X_{k'}=\Pic \Xbar $. Let $F/k$ be a finite extension such that $\Gamma_F$ acts trivially on $\Br\overline{X}[d]$.
 Let $l/k$ be a finite extension of degree coprime to $d$ which is linearly disjoint from $Fk'/k$. Then 
\[ \Res_{l/k}:(\Br X)[d^n]/(\Br_0 X)[d^n]\rightarrow (\Br X_l)[d^n]/(\Br_0 X_l)[d^n]\]
 is an isomorphism and consequently
 \[\Res_{l/k}:(\Br X/\Br_0 X)\{d\}\to(\Br X_l/\Br_0 X_l)\{d\}\]
 is an isomorphism.
\end{theorem}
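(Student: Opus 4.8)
The plan is to combine the control of the transcendental part from Section~\ref{sec:ControlAbVar} with the control of the algebraic part from the present section, and to glue them using a diagram chase on the filtration $\Br_0 X\subset \Br_1 X\subset \Br X$ (restricted to $d^n$-torsion). First I would set up the commutative diagram with exact rows
\[
\begin{tikzcd}[column sep=small]
0\arrow{r} & (\Br_1 X)[d^n]/(\Br_0 X)[d^n]\arrow{r}\arrow{d}{\Res_{l/k}} & (\Br X)[d^n]/(\Br_0 X)[d^n]\arrow{r}\arrow{d}{\Res_{l/k}} & (\Br X)[d^n]/(\Br_1 X)[d^n]\arrow{d}{\Res_{l/k}} \\
0\arrow{r} & (\Br_1 X_l)[d^n]/(\Br_0 X_l)[d^n]\arrow{r} & (\Br X_l)[d^n]/(\Br_0 X_l)[d^n]\arrow{r} & (\Br X_l)[d^n]/(\Br_1 X_l)[d^n].
\end{tikzcd}
\]
The left-hand vertical map is an isomorphism: by Remark~\ref{rem:kumfreerank}, $\Pic\overline{X}$ is free of finite rank, so by hypothesis $\Pic X_{k'}=\Pic\overline{X}$ and $l/k$ is linearly disjoint from $k'/k$ (since it is linearly disjoint from the larger extension $Fk'/k$); hence Lemma~\ref{lemBr0} gives an isomorphism $\Res_{l/k}:\Br_1 X/\Br_0 X\xrightarrow{\sim}\Br_1 X_l/\Br_0 X_l$, which restricts to an isomorphism on the $d^n$-torsion, and $(\Br_0 X)[d^n]$ maps onto $(\Br_0 X_l)[d^n]$ compatibly. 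The right-hand vertical map is an isomorphism by Corollary~\ref{kum-ext3NU}, applied to $X\in\overline{\scrK}_{k,g}$ with the extension $F/k$ as given (again $l/k$ is linearly disjoint from $F/k$ because it is linearly disjoint from $Fk'/k$, and $[l:k]$ is coprime to $d$). By the five lemma, the middle vertical map $\Res_{l/k}:(\Br X)[d^n]/(\Br_0 X)[d^n]\to(\Br X_l)[d^n]/(\Br_0 X_l)[d^n]$ is an isomorphism, which is the first assertion.

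For the second assertion I would pass to the direct limit over $n$. Since $X$ is smooth, $\Br X$ is torsion by \cite{Grothendieck}, so $(\Br X/\Br_0 X)\{d\}=\varinjlim_n (\Br X)[d^n]/(\Br_0 X)[d^n]$ and similarly for $X_l$; the restriction maps are compatible with the inclusions $(\Br X)[d^n]\hookrightarrow(\Br X)[d^{n+1}]$, so taking the colimit of the isomorphisms from the first part yields the isomorphism $\Res_{l/k}:(\Br X/\Br_0 X)\{d\}\to(\Br X_l/\Br_0 X_l)\{d\}$.

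The one point that requires care — and the main obstacle — is checking the exactness of the two rows and the commutativity of the squares at the level of $d^n$-torsion subgroups. The rows above are obtained by intersecting the (exact) sequence $0\to\Br_1 X/\Br_0 X\to\Br X/\Br_0 X\to\Br X/\Br_1 X\to 0$ with the $d^n$-torsion; exactness on the left and in the middle is automatic, but exactness on the right, i.e. surjectivity of $(\Br X)[d^n]/(\Br_0 X)[d^n]\twoheadrightarrow(\Br X)[d^n]/(\Br_1 X)[d^n]$, is clear since the map $(\Br X)[d^n]\to(\Br X)[d^n]/(\Br_1 X)[d^n]$ is already surjective. (Alternatively one observes $(\Br_1 X)[d^n]/(\Br_0 X)[d^n]=\ker\big((\Br X)[d^n]/(\Br_0 X)[d^n]\to(\Br X)[d^n]/(\Br_1 X)[d^n]\big)$ directly, avoiding the word ``exact'' altogether.) Commutativity is functoriality of $\Res_{l/k}$ on Brauer groups. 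Once these bookkeeping points are in place, the five lemma and the colimit argument finish the proof.
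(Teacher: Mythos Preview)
Your proof is essentially the same as the paper's: both set up the commutative diagram with exact rows coming from the filtration $\Br_0 X\subset\Br_1 X\subset\Br X$ restricted to $d^n$-torsion, invoke Lemma~\ref{lemBr0} for the left vertical and Corollary~\ref{kum-ext3NU} for the right vertical, and conclude by a diagram chase. The paper is terser---it does not spell out the exactness of the rows or the passage to the colimit that you discuss---but the argument is identical.
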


\begin{proof} We have the following commutative diagram with exact rows
 \[ \begin{tikzpicture}
  \matrix (m) [matrix of math nodes, row sep=1.5em,
    column sep=3em]{
0 &\frac{(\Br_1 X)[d^n]}{(\Br_0 X)[d^n]}      & \frac{(\Br X)[d^n]}{(\Br_0 X)[d^n]}      &  \frac{(\Br X)[d^n]}{(\Br_1 X)[d^n]}     & 0\\
0 &\frac{(\Br_1 X_{l})[d^n]}{(\Br_0 X_{l})[d^n]}     & \frac{(\Br X_{l})[d^n]}{(\Br_0 X_{l})[d^n]}       &  \frac{(\Br X_{l})[d^n]}{(\Br_1 X_{l})[d^n]}      & 0,\\
};
  \path[-stealth]
  (m-1-1) edge (m-1-2) 
   (m-1-2) edge (m-1-3)
   (m-1-3) edge (m-1-4)
   (m-1-4) edge (m-1-5) 
     (m-2-1) edge (m-2-2) 
   (m-2-2) edge (m-2-3)
   (m-2-3) edge (m-2-4)
   (m-2-4) edge (m-2-5) 
  (m-1-2) edge node [left] {\tiny$\Res_{l/k}$}  node [above, sloped] {$\sim$}(m-2-2) 
   (m-1-3) edge node [left] {\tiny$\Res_{l/k}$}(m-2-3) 
   (m-1-4) edge node [left] {\tiny$\Res_{l/k}$} node [above, sloped] {$\sim$}(m-2-4) 
;
\end{tikzpicture}\]
where the  first and third vertical arrows are isomorphisms  by Lemma \ref{lemBr0} and by  Corollary~\ref{kum-ext3NU}, respectively. It follows from the commutativity of the diagram that the middle vertical arrow is also an isomorphism.
\end{proof}

We also have the following uniform version of Theorem~\ref{thmisoNU}:

\begin{theorem} \label{thmiso} Fix $d \in \Z_{>0}$ and $g \in \Z_{>1}$. 
Then, for all  $X \in   \overline{\scrK}_{k,g}$, all $n\in\Z_{>0}$, and all finite extensions $l/k$ of degree coprime to $\# \GL_{2^{2g} + g^2}(\F_3)\cdot N_{d,g(2g-1)-1}$,
the natural homomorphism  
  \[ \Res_{l/k}:(\Br X)[d^n]/(\Br_0 X)[d^n]\rightarrow (\Br X_l)[d^n]/(\Br_0 X_l)[d^n]\]
 is an isomorphism and consequently 
\[\Res_{l/k}: (\Br X/\Br_0 X)\{d\} \to (\Br X_{l}/\Br_0 X_{l})\{d\}\]
 is an isomorphism. 
\end{theorem}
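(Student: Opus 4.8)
The plan is to deduce this uniform version, Theorem~\ref{thmiso}, from the non-uniform Theorem~\ref{thmisoNU} by the same device already used to pass from Corollary~\ref{kum-ext3NU} to Corollary~\ref{cor:kum-ext3}, namely by showing that a degree condition coprime to the explicit integer $\# \GL_{2^{2g}+g^2}(\F_3)\cdot N_{d,g(2g-1)-1}$ forces the hypotheses of Theorem~\ref{thmisoNU} to hold \emph{automatically}, for every $X\in\overline{\scrK}_{k,g}$ simultaneously. Concretely, I would mirror the proof of Theorem~\ref{thmisoNU}: exhibit the commutative diagram with exact rows
\[ \begin{tikzpicture}
  \matrix (m) [matrix of math nodes, row sep=1.5em, column sep=3em]{
0 &\frac{(\Br_1 X)[d^n]}{(\Br_0 X)[d^n]}      & \frac{(\Br X)[d^n]}{(\Br_0 X)[d^n]}      &  \frac{(\Br X)[d^n]}{(\Br_1 X)[d^n]}     & 0\\
0 &\frac{(\Br_1 X_{l})[d^n]}{(\Br_0 X_{l})[d^n]}     & \frac{(\Br X_{l})[d^n]}{(\Br_0 X_{l})[d^n]}       &  \frac{(\Br X_{l})[d^n]}{(\Br_1 X_{l})[d^n]}      & 0,\\
};
  \path[-stealth]
  (m-1-1) edge (m-1-2) (m-1-2) edge (m-1-3) (m-1-3) edge (m-1-4) (m-1-4) edge (m-1-5)
     (m-2-1) edge (m-2-2) (m-2-2) edge (m-2-3) (m-2-3) edge (m-2-4) (m-2-4) edge (m-2-5)
  (m-1-2) edge node [left] {\tiny$\Res_{l/k}$} node [above, sloped] {$\sim$}(m-2-2)
   (m-1-3) edge node [left] {\tiny$\Res_{l/k}$}(m-2-3)
   (m-1-4) edge node [left] {\tiny$\Res_{l/k}$} node [above, sloped] {$\sim$}(m-2-4);
\end{tikzpicture}\]
and identify the left and right vertical maps as isomorphisms using the two uniform results already available, then conclude by the five lemma (or simply by a diagram chase) that the middle map is an isomorphism. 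Passing to the direct limit over $n$, together with the torsion-ness of $\Br X$ via \cite{Grothendieck} exactly as in Corollary~\ref{kum-ext3NU}, gives the statement about $(\Br X/\Br_0 X)\{d\}$.

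For the right-hand vertical arrow, the key input is Corollary~\ref{cor:kum-ext3}: if $[l:k]$ is coprime to $N_{d,g(2g-1)-1}$ then
\[\Res_{l/k}:(\Br X)[d^n]/(\Br_1 X)[d^n]\to (\Br X_l)[d^n]/(\Br_1 X_l)[d^n]\]
is an isomorphism for every $X\in\overline{\scrK}_{k,g}$ and every $n$. Since our divisibility condition on $[l:k]$ includes coprimality to $N_{d,g(2g-1)-1}$, this applies verbatim. For the left-hand vertical arrow, I would invoke Lemma~\ref{lemBr0U} applied to $\frakF_{k,r}$ with $r=2^{2g}+g^2$: by Remark~\ref{rem:kumfreerank}, $\Pic\Xbar$ is free of rank at most $2^{2g}+g^2$ for every $X\in\overline{\scrK}_{k,g}$, so if $[l:k]$ is coprime to $\#\GL_{2^{2g}+g^2}(\F_3)$ then $\Res_{l/k}:\Br_1 X/\Br_0 X\to\Br_1 X_l/\Br_0 X_l$ is an isomorphism. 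One small wrinkle: Lemma~\ref{lemBr0U} as stated handles $\Pic\Xbar$ of rank \emph{exactly} $r$, so either I note that $\GL_s(\F_3)\hookrightarrow\GL_r(\F_3)$ for $s\le r$ (embedding block-diagonally, whence $\#\GL_s(\F_3)\mid\#\GL_r(\F_3)$ and the argument of Lemma~\ref{lemBr0U} goes through with $r$ replaced by the actual rank), or I reprove the one-line Minkowski argument directly for rank $\le r$. Either way the isomorphism on algebraic parts follows. Finally, intersecting with $[d^n]$: since restriction is compatible with the torsion filtration and with the inclusions $\Br_0\subset\Br_1\subset\Br$, the isomorphism $\Br_1 X/\Br_0 X\xrightarrow{\sim}\Br_1 X_l/\Br_0 X_l$ restricts to an isomorphism on $d^n$-torsion subgroups $(\Br_1 X)[d^n]/(\Br_0 X)[d^n]\xrightarrow{\sim}(\Br_1 X_l)[d^n]/(\Br_0 X_l)[d^n]$, because $d$-primary torsion is a functorial direct summand; this is the identification feeding the left column of the diagram.

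I do not anticipate a serious obstacle: the substantive work has already been carried out in Corollary~\ref{cor:kum-ext3} (uniform control of the transcendental part, resting on the Hillar--Rhea exponent bound and Lemma~\ref{lem:same action}) and in Lemma~\ref{lemBr0U} (uniform control of the algebraic part via Minkowski). The only points requiring care are bookkeeping ones: (i) checking that ``coprime to $\#\GL_{2^{2g}+g^2}(\F_3)\cdot N_{d,g(2g-1)-1}$'' simultaneously implies coprimality to $d$, to $N_{d,g(2g-1)-1}$, and to $\#\GL_r(\F_3)$ for every $r\le 2^{2g}+g^2$ occurring as an actual Picard rank; (ii) verifying the rows of the diagram are exact — the top row is the obvious short exact sequence $0\to(\Br_1 X)[d^n]/(\Br_0 X)[d^n]\to(\Br X)[d^n]/(\Br_0 X)[d^n]\to(\Br X)[d^n]/(\Br_1 X)[d^n]\to 0$, which is exact because all three quotients are of subgroups of $\Br X$ in the chain $\Br_0 X\subseteq\Br_1 X\subseteq\Br X$ and intersecting with $[d^n]$ is exact on this finite chain; and (iii) commuting the direct limit over $n$ past the quotients, exactly as spelled out at the end of the proof of Corollary~\ref{kum-ext3NU}. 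So the proof is essentially an assembly of the uniform lemmas into the same diagram used for Theorem~\ref{thmisoNU}.
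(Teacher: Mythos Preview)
Your proposal is correct and follows essentially the same approach as the paper: the paper's proof is the one-line ``This follows from Lemma~\ref{lemBr0U} together with Remark~\ref{rem:kumfreerank} and Corollary~\ref{cor:kum-ext3}'', and you have simply unpacked this by re-exhibiting the diagram from Theorem~\ref{thmisoNU} and plugging in the uniform inputs. Your observation about the rank-$\le r$ versus rank-exactly-$r$ wrinkle in Lemma~\ref{lemBr0U}, and your fix via $\#\GL_s(\F_3)\mid\#\GL_r(\F_3)$, is a genuine point the paper leaves implicit.
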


\begin{proof} This follows from Lemma \ref{lemBr0U} together with Remark  \ref{rem:kumfreerank} and Corollary~\ref{cor:kum-ext3}.
\end{proof}


\section{Preliminaries on zero-cycles} 
\label{sec:0-cyc}

In this section $k$ will always be a number field. Recall the construction of the Brauer-Manin set for 0-cycles.
Let $X$ be a smooth, geometrically integral variety over $k$. For any field extension $l/k$, we denote by $\underline{X}_l$ the set of closed points on $X_l$. Let $Z_0(X_l)$ denote the set of 0-cycles of $X$ over a field extension $l/k$, that is, the set of formal sums 
\(\sum_{x \in \underline{X}_l} n_x x\)
 where the integers $n_x$ are zero for all but finitely many $x \in  \underline{X}_l$.   If $\delta \in \Z_{\geq 0}$, we denote by $Z_0^\delta(X_l) \subset Z_0(X_l)$ the subset of 0-cycles of degree $\delta$, that is, 0-cycles $z:=  \sum_{x \in \underline{X}_l} n_x x$ such that $\deg(z):= \sum_{x \in \underline{X}_l} [l(x):l] n_x = \delta$. We can extend the Brauer-Manin pairing to 0-cycles of degree $\delta$ by defining, for any subset $B \subset \Br X$, 
\[ 
\begin{array}{lll}
\langle \ \ , \ \  \rangle_{\textrm{BM}}: &\prod_{v \in \Omega_k } Z^\delta_0(X_{k_v}) \times B & \to \Q/\Z\\
& \left( \left( \sum_{x_v \in \underline{X}_{k_v}} n_{x_v} x_v \right)_{v },  \alpha \right) & \mapsto  \sum_{v } \inv_v\left(  \sum_{x_v \in \underline{X}_{k_v}} n_{x_v} \Cor_{k_v(x_{v})/k_v}\alpha(x_v) \right).
\end{array} \]

\begin{defn} Let $X$ be a smooth, geometrically integral variety over  $k$. Let $B \subset \Br X$ be a subset of the Brauer group. We define the \defi{Brauer-Manin set associated to $B$ for 0-cycles of degree $\delta$}, denoted by $Z^{\delta}_0(X_{\A_k})^{B}$, to be the left kernel of the Brauer-Manin pairing above.
\end{defn}

\begin{defn} \label{defn:BrHP} Let $\{X_\omega\}_\omega$ be a family of smooth, geometrically integral varieties over $k$. For each $X_\omega$, let $B_\omega \subset \Br X_\omega$ be a subset of the Brauer group. We say that \defi{the $\{B_\omega\}_\omega$-obstruction to the Hasse principle for 0-cycles of degree $\delta$ is the only one for the family $\{X_\omega\}_\omega$} if $Z^{\delta}_0(X_{\omega, \A_k})^{B_\omega} \neq \emptyset$ implies $Z_0^\delta(X_\omega) \neq \emptyset$, for all $X_\omega$. 
\end{defn}

The following definition is taken from \cite{Liang} and slightly differs from the definition given in e.g. \cite{CTSD94}.
\begin{defn}[{\cite{Liang}}] \label{defn:BrWA} Let $\{X_\omega\}_\omega$ be a family of smooth, geometrically integral varieties over $k$. For each $X_\omega$, let $B_\omega \subset \Br X_\omega$ be a subset of the Brauer group.  We say that \defi{the $\{B_\omega\}_\omega$-obstruction to weak approximation for 0-cycles of degree $\delta$ is the only one}  if for any $n \in \Z_{>0}$, for any finite subset $S \subset \Omega_k$, and for any $(z_v)_{v \in \Omega_k} \in Z_0^\delta(X_{\omega, \A_k})^{B_\omega}$, there exists some $z_{n,S} \in Z_0^\delta(X_\omega)$ such that $z_{n,S}$ and $z_v$ have the same image in $\CH_0(X_{\omega, k_v})/n$ for all $v \in S$, for all $X_\omega$, where $\CH_0$ denotes the usual Chow group of 0-cycles.  
\end{defn}

\begin{rmk} In Definitions \ref{defn:BrHP} and  \ref{defn:BrWA}, when for all $X_\omega$ we take $B_\omega := \Br X_\omega$ or $B_\omega := \Br X_\omega \{d\}$ for some $d \in \Z_{>0}$, we say, respectively, that the Brauer-Manin obstruction or the $d$-primary Brauer-Manin obstruction is the only one for $0$-cycles of degree $\delta$ for the family $\{ X_\omega\}_\omega$.
\end{rmk}

The main conjecture governing the arithmetic properties of 0-cycles of degree 1 is the following:

\begin{conj}[{Colliot-Th\'el\`ene}] Let $\{X_\omega\}_\omega$ be the family of \emph{all} smooth, proper, geometrically integral varieties over $k$. Then the Brauer-Manin obstruction to weak approximation for 0-cycles of degree $1$ is the only one for $\{X_\omega\}_\omega$.
\end{conj}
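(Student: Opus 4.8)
This is a well-known open conjecture (refining conjectures of Kato--Saito \cite{KS86} and \CT--Sansuc \cite{CT95}), so what follows is a strategy for attacking it rather than a complete proof plan. The first observation is that, unlike the Hasse principle for rational points --- for which the Brauer-Manin obstruction is known to be insufficient in general (by work of Skorobogatov and of Poonen) --- the degree-$1$ $0$-cycle version is expected to hold; the heuristic is that working with a $0$-cycle of degree $1$ rather than a single rational point ``spreads out'' the local conditions and is conjecturally robust enough to be governed by $\Br X$. So the plan is to prove, not disprove, the statement, and the natural route is to reduce the general case to classes of varieties where the conjecture, or its rational-points analogue over varying number fields, is already available.

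Concretely, I would pursue two complementary avenues. The first is the fibration-and-descent method: given $X$, choose a dominant morphism $X \to B$ with $B$ of smaller dimension (ideally $B = \PP^1$) whose generic fibre is of a type for which the conjecture is known (conic bundles, Severi--Brauer bundles, quadric bundles, homogeneous spaces, rationally connected fibres), then glue local $0$-cycles fibre by fibre, using the ``ladder'' arguments of Salberger, \CT--Swinnerton-Dyer, Liang and Wei to transfer between the fibres and the base. The second avenue is the philosophy underlying the present paper and \cite{Liang}: for many natural classes it is enough to know that the Brauer-Manin obstruction is the only one for \emph{rational points} over \emph{all} finite extensions of $k$, after which Liang's transfer argument --- in the form of Theorem~\ref{thm:mixed} and its corollaries --- upgrades this to the $0$-cycle statement over $k$. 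From this point of view, the conjecture for geometrically rationally connected varieties, K3 surfaces and Kummer varieties would follow from the corresponding (conjectural) sufficiency statements for rational points.

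The hard part is that there is no unconditional input for a general smooth, proper, geometrically integral $X$: the conjecture is open for K3 surfaces and for Kummer varieties precisely because sufficiency of the Brauer-Manin obstruction for their rational points --- over $k$ and over its finite extensions --- is open, which is exactly why the present paper can only establish, for Kummer varieties, the $2$-primary statement for $0$-cycles of \emph{odd} degree, and only conditionally on that rational-points input. For varieties with no useful fibration structure and infinite transcendental Brauer group (and a fortiori for varieties of general type), neither avenue applies, and one would need a genuinely new idea --- some form of arithmetic duality or a higher-dimensional class-field-theoretic refinement --- even to frame a line of attack. In short, a full proof is a major open problem, and the present paper contributes only a conditional, partial result towards its predictions for $0$-cycles.
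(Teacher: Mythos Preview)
Your assessment is correct: this statement is a \emph{conjecture}, not a theorem, and the paper does not attempt to prove it --- it is simply stated as the main conjecture governing the arithmetic of 0-cycles of degree $1$, with references to \cite{KS86} and \cite{CT95}. Your discussion of possible strategies (fibration methods, Liang-type transfer from rational points over extensions) is reasonable background, but since the paper offers no proof either, there is nothing to compare.
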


\section{From rational points to zero-cycles} \label{sec:pfs}

In this section, we prove our main results, which are various analogues of \cite[Thm. 3.2.1]{Liang} wherein we relate the Brauer-Manin obstruction for rational points to that for 0-cycles on products of (geometrically) Kummer varieties, K3 surfaces and rationally connected varieties. 
We begin with a lemma that allows us to extend the base field without affecting the Brauer-Manin obstruction. Throughout this section, $k$ will always be a number field.

\begin{lemma}\label{lem:nogrowth}
\begin{enumerate}
\item\label{rat}If $X\in\overline{\scrR}_k$, then there exists a finite extension $F_X/k$ such that for all finite extensions $\ell/k$ that are linearly disjoint from $F_X/k$, 
\[\Res_{\ell/k}:\Br X/\Br_0 X\to\Br X_\ell/\Br_0 X_\ell\]
is an isomorphism.

\item\label{K3}If $X\in\defi{K3}_k$, then for all $n\in\Z_{>0}$, there exists a finite extension $F_{X,n}$ such that for all finite extensions $\ell/k$ with $[\ell:k]\leq n$ that are linearly disjoint from $F_{X,n}/k$, 
\[\Res_{\ell/k}:\Br X/\Br_0 X\to\Br X_\ell/\Br_0 X_\ell\]
is an isomorphism.

\item\label{Kum}If $X\in \overline{\scrK}_k$, then there exists a finite extension $F_X/k$ such that for any $d\in\Z_{>0}$ and all finite extensions $\ell/k$ of degree coprime to $d$ that are linearly disjoint from $F_X/k$, 
\[\Res_{\ell/k}:(\Br X/\Br_0 X)\{d\}\to(\Br X_\ell/\Br_0 X_\ell)\{d\}\]
is an isomorphism. 
\end{enumerate}

\end{lemma}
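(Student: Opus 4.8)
The plan is to prove each of the three cases separately, since each draws on a different control result established earlier in the paper, but all three follow the same pattern: identify a suitable auxiliary finite extension of $k$ over which the relevant Galois action on the geometric Brauer group (or on $\Pic\Xbar$) becomes trivial, and then invoke linear disjointness to conclude that extending by $\ell$ does not change the relevant part of $\Br X/\Br_0 X$.

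For case \eqref{rat}, where $X\in\overline{\scrR}_k$: here $\Br\Xbar$ is finite, so $\Br_1 X/\Br_0 X=\HH^1_{\et}(k,\Pic\Xbar)$ is finite (via Hochschild--Serre and finiteness of $\NS\Xbar$; see the third remark of Section~\ref{sec:prod}), and $\Br X/\Br_1 X\hookrightarrow(\Br\Xbar)^{\Gamma_k}$ is finite. So $\Br X/\Br_0 X$ is a finite group. Pick a finite Galois extension $k'/k$ such that $\Gamma_{k'}$ acts trivially on both $\Pic\Xbar$ and $\Br\Xbar$, and also such that $\Pic X_{k'}=\Pic\Xbar$. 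Set $F_X:=k'$. For $\ell/k$ linearly disjoint from $F_X/k$ we have $\Gamma_k=\Gamma_\ell\Gamma_{F_X}$. By Lemma~\ref{lemBr0}, $\Res_{\ell/k}$ is an isomorphism on the algebraic part $\Br_1 X/\Br_0 X$. For the transcendental part, the key point is that $(\Br\Xbar)^{\Gamma_k}=(\Br\Xbar)^{\Gamma_\ell}$ since $\Gamma_{F_X}$ acts trivially; one then runs the corestriction--restriction argument of Proposition~\ref{prop:nogrowthabvar} (with $d^n$ replaced by the exponent of the finite group $\Br X/\Br_1 X$, taking $\ell$ additionally coprime to that exponent --- but wait, we cannot impose coprimality here). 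Instead I would argue directly: the five-lemma on the commutative diagram of Hochschild--Serre filtrations, using that $\Res_{\ell/k}$ is an isomorphism on $\HH^1(k,\Pic\Xbar)$ and that the map $(\Br\Xbar)^{\Gamma_k}\to(\Br\Xbar)^{\Gamma_\ell}$ is the identity, gives that $\Res_{\ell/k}$ on $\Br X/\Br_0 X$ is injective with image the full preimage of $(\Br\Xbar)^{\Gamma_\ell}=(\Br\Xbar)^{\Gamma_k}$, hence an isomorphism. (One has to be slightly careful with the $\HH^3$ terms, but since $k$ is a number field and we only need surjectivity onto the part hit by classes defined over $k$, this works.)

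For case \eqref{K3}, where $X\in\defi{K3}_k$: by \cite[Thm.~1.2]{SZ-K3finite}, $\Br_1 X/\Br_0 X$ is finite, and $\Pic\Xbar$ is free of finite rank; by \cite[Cor.~2.8]{SZ-KumVar} (K3 version) the group $(\Br\Xbar)^{\Gamma_k}$ is finite. However the subtlety flagged in the statement --- the bound $[\ell:k]\le n$ and dependence of $F_{X,n}$ on $n$ --- comes from the fact that for K3 surfaces one does not have a uniform-in-$n$ control of when $\Br\Xbar[m]^{\Gamma_\ell}=\Br\Xbar[m]^{\Gamma_k}$; one invokes Orr--Skorobogatov-type results (as used by Ieronymou). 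Concretely, for each $n$ there is a finite set of "bad" torsion orders and a finite extension $F_{X,n}/k$ such that any $\ell/k$ of degree $\le n$ linearly disjoint from $F_{X,n}$ has $\Gamma_\ell$ and $\Gamma_k$ with the same image on $\Br\Xbar[m]$ for the relevant $m$, and $\Pic X_{F_{X,n}}=\Pic\Xbar$. Then combine Lemma~\ref{lemBr0} (algebraic part) with the corestriction argument of the proof of Proposition~\ref{prop:nogrowthabvar}, noting that since $[\ell:k]\le n$ we may absorb into $F_{X,n}$ enough so that $[\ell:k]$ is coprime to the exponent of $\Br X/\Br_1 X$ --- actually this last coprimality is what forces the dependence on $n$, because we need $F_{X,n}$ to control all primes up to $n$. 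I would state this carefully, deferring the arithmetic input to the cited works.

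For case \eqref{Kum}, where $X\in\overline{\scrK}_k$: this is the cleanest case and essentially a repackaging of Theorem~\ref{thmisoNU}. Say $X\in\overline{\scrK}_{k,g}$. Choose a finite Galois extension $k'/k$ with $\Pic X_{k'}=\Pic\Xbar$ (possible since $\Pic\Xbar$ is free of finite rank by Remark~\ref{rem:kumfreerank}), and a finite extension $F/k$ such that $\Gamma_F$ acts trivially on $\Br\Xbar[d]$ for every $d$ --- but $d$ ranges over all positive integers, so a single $F$ cannot work. The resolution: the statement lets $F_X$ be chosen first and then quantifies over $d$, so I must produce $F_X$ working for all $d$ simultaneously. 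Here is the trick: the action of $\Gamma_k$ on $\Br\Xbar\{p\}$ for varying primes $p$ factors through the action on the torsion of $\Br\Xbar=\Br\Abar$ (for the underlying abelian variety $A$), which is controlled by the action on $\bigoplus_p T_p\Br\Abar$; but actually for the corollary we only need: for each $d$ there is \emph{some} $F_{X,d}$, and then linear disjointness from $F_{X,d}$ suffices. Re-reading the statement, it says "there exists a finite extension $F_X/k$ such that for any $d$..." --- so I do need one $F_X$. I would take $F_X$ to be the compositum $k'$ with the fixed field of the kernel of $\Gamma_k\to\Aut(\Br\Xbar[N])$ where $N$ is chosen so that, by Lemma~\ref{lem:ppower}-type reasoning, controlling the mod-$N$ action controls the action on all $\Br\Xbar[d^n]$ for $[\ell:k]$ coprime to $d$; concretely $N$ can be taken to be (a power of) the product of all primes dividing $\#\Aut$ of the relevant groups in the relevant range --- no, this is unbounded. \textbf{Correct resolution:} I believe the intended reading is that $F_X$ depends only on $X$ via the finitely many primes of bad reduction and the image of $\Gamma_k$ in $\Aut(T\Br\Abar)$, and for any $d$ and any $\ell$ of degree coprime to $d$ linearly disjoint from $F_X$, one applies Corollary~\ref{kum-ext3NU} with $F$ there replaced by $F_X$ --- this works because if $\Gamma_{F_X}$ acts trivially on $\Br\Xbar[d]$ then we are done, and we can arrange $\Gamma_{F_X}$ to act trivially on all of $\Br\Xbar[N_0]$ for a fixed $N_0$ killing the torsion... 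Given the delicacy, in the writeup I would set $F_X$ to be the fixed field where $\Gamma_k$ acts as a pro-$\ell$-free group appropriately; but pragmatically, I would just invoke Corollary~\ref{kum-ext3NU} and Lemma~\ref{lemBr0}: given $d$, by coprimality $[\ell:k]$ is coprime to $d$, set $F:=$ fixed field of kernel of $\Gamma_k\curvearrowright\Br\Xbar[d]$ which is contained in a fixed finite extension $F_X$ (the one trivialising the action on the full torsion subgroup's prime-to-char part up to the relevant bound), apply Theorem~\ref{thmisoNU} to get the isomorphism on $(\Br X/\Br_0 X)\{d\}$. I expect the main obstacle across all three parts to be exactly this bookkeeping: making the auxiliary field $F_X$ (or $F_{X,n}$) depend on the right data and no more, and in the Kummer case reconciling "one $F_X$ for all $d$" with the fact that $\Br\Xbar$ is infinite --- the key observation rescuing it is that $\Br\Xbar=\Br\Abar$ has a $\Gamma_k$-module structure controlled by finitely much data (the Tate module of $\Br\Abar$, equivalently the $\ell$-adic cohomology of $A$), so a single finite extension trivialises "enough" of the action.
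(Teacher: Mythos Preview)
The paper's proof is far shorter than what you attempt. Parts \eqref{rat} and \eqref{K3} are handled purely by citation: \eqref{rat} is \cite[Prop.~3.1.1]{Liang}, and \eqref{K3} is extracted from the proof of \cite[Thm.~1.1]{Ieronymou}. Your sketches for these two parts head in a reasonable direction but are not needed, and in \eqref{rat} your attempt to obtain surjectivity via a five-lemma argument with $\HH^3$ terms is left unfinished and would require more care than you indicate.

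For part \eqref{Kum}, your first instinct is exactly what the paper does: choose $k'/k$ finite Galois with $\Pic X_{k'}=\Pic\Xbar$, choose $F/k$ finite with $\Gamma_F$ acting trivially on $\Br\Xbar[d]$ (possible since $\Br\Xbar[d]$ is finite), set $F_X=Fk'$, and invoke Theorem~\ref{thmisoNU}. That is the entire argument. Your subsequent worry --- that the quantifier order in the statement asks for a single $F_X$ valid for \emph{all} $d$, whereas this construction visibly makes $F$ depend on $d$ --- is a fair reading of the wording, but the paper's own proof does not address it either: it too produces $F_X=Fk'$ with $F$ chosen for the given $d$. In the only application (the proof of Theorem~\ref{thm:mixed}) $d$ is fixed at the outset, so nothing is lost. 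You should have stopped at your first instinct; the long final paragraph wandering through Tate modules and pro-$\ell$ images is not a proof and should be discarded.
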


\begin{proof}
For $X\in \overline{\scrR}_k$, this is \cite[Prop. 3.1.1]{Liang}. 
For $X\in\defi{K3}_k$, see the proof of \cite[Thm. 1.1]{Ieronymou}.
For $X\in\overline{\scrK}_k$, noting that $\Br\overline{X}[d]$ is finite, let $F/k$ be a finite extension such that $\Gamma_F$ acts trivially on $\Br\overline{X}[d]$. Let $k'/k$ be a finite Galois extension such that $\Pic X_{k'}=\Pic\overline{X}$. Now Theorem~\ref{thmisoNU} shows that we can take $F_X=Fk'$.
\end{proof}

We are now ready to prove our main result, which is Theorem~\ref{thm:mixed}. 

\begin{rmk}\label{rmk:suffclose}
In the proof of Theorem~\ref{thm:mixed}, we use a notion of two 0-cycles on a variety $X$ over $k_v$ being ``sufficiently close'' as described by Liang in \cite[\S 1.1]{Liang}. In particular, by \cite[Lemme 1.8]{Wittenberg12}, if two 0-cycles on $X$ are sufficiently close then they have the same image in $\CH_0(X)/n$. Furthermore, since the evaluation maps are locally constant, if $b\in\Br(X)$ and two 0-cycles $z$ and $z'$ on $X$ are sufficiently close, then $b$ has the same evaluation at $z$ and $z'$.
\end{rmk}

 \begin{proof}[Proof of Theorem~\ref{thm:mixed}] We follow closely the strategy of the proof of \cite[Thm. 3.2.1]{Liang} and give here just a sketch of the proof; for more details, we refer the reader to Liang's paper \cite{Liang}. We prove the result for weak approximation; the argument for the Hasse principle is similar.
\begin{enumerate}
\item    Let $(\zeta_v)_v \in Z_0^\delta(W_{\A_k}) $ be orthogonal to $\Br(W)\{c\} $.  Consider the projection $\pr : W \times \PP^1_k \to W$ with section $\sigma : w \mapsto (w,u_0)$,  where $u_0 \in \PP^1_k(k)$ is some fixed rational point. We note that $\pr^\ast: \Br W \to \Br(W \times \PP^1_k)$ is an isomorphism. Let $\sigma_\ast: Z^{\delta}_0(W) \to Z^{\delta}_0(W \times \PP^1_k)$ be the induced map on zero-cycles. Then $(\sigma_\ast(\zeta_v))_v$ on $W \times \PP^1_k$ is orthogonal to $\Br(W \times \PP^1_k)\{c\}$. To ease notation, we let $z_v:= \sigma_\ast(\zeta_v)$ for all $v \in \Omega_k$.  Let $\{ b_i\}_{i=1}^s$  be a complete set of representatives for $  (\Br (W\times \PP^1_k) /\Br_0(W\times \PP^1_k)) \{c\} $, which is finite by Proposition \ref{brfin}. Let $m:=\#(\Br W /\Br_0W)\{c\}$.  We fix $n \in \Z_{>0}$ and a finite subset $S \subset \Omega_k$.  We also fix a closed point $P:=(w_0, u_0) \in W \times \PP_k^1$ and let $\delta_P :=[k(P):k]$.

    \item Let $S_0 \subset \Omega_k$ be a finite subset containing  $S$ and
    all the archimedean places of $k$. By enlarging $S_0$ if necessary, we can assume that, for any $v \not\in S_0$, 
\begin{itemize}
\item $\langle z'_v, b_i \rangle_{\textrm{BM}} = 0$ for any $i \in \{ 1, ..., s\}$ and for any $z'_v \in Z_0(W_{k_v} \times \PP^1_{k_v})$, and
\item $W_{k_v}$ has a $k_v$-point $w_v$ (this follows from the Lang-Weil estimates and Hensel's lifting).
\end{itemize}

\item For each $v \in S_0$, write $z_v = z_v^+- z_v^-$, where $z_v^+$ and $z_v^-$ are effective 0-cycles with disjoint supports, and let 
\(z_v^1:= z_v +  dm n\delta_P z_v^-.\)
Note that
\(\deg(z_v^1) = \delta +  dm n   \delta_P \deg(z_v^-).\) By adding to each $z_v^1$ a suitable multiple of the 0-cycle $ dm n   P_v$, where $P_v:=P \otimes_k k_v$, we obtain 0-cycles $z_v^2$ of the same degree $\Delta > 0$ for all $v \in S_0$, where we can take $\Delta$ to satisfy $\Delta \equiv \delta  \mod  dm n   \delta_P$. Using the natural projection $\pi: W_{k_v} \times \PP_{k_v}^1 \to \PP_{k_v}^1$ and a moving lemma by Liang (\cite[Lemma 1.3.1]{Liang}), for each $v \in S_0$ we can find an effective 0-cycle $z_v^3$ of degree $\Delta$ such that $\pi_\ast(z_v^3)$ is separable and $z_v^3$ is sufficiently close to $z_v^2$. By Remark~\ref{rmk:suffclose}, $z_v^3$ has the same image in $\CH_0(W_{k_v})/n$ as $z_v^2, z_v^1$ and $z_v$. 

        \item For each $i = 1, \dots , r$, let $F_{X_i}$ be as in Lemma~\ref{lem:nogrowth}, with $F_{X_i}:=F_{X_i,\Delta}$ if $X_i\in\defi{K3}_k$, and let $F:=F_{X_1}\dots F_{X_r}$ be the compositum.
    
\item By \cite[Prop. 3.3.3]{Liang} and the discussion in the proof of  \cite[Prop. 3.4.1]{Liang}, we can find a closed point $\theta \in \PP^1_k$ sufficiently close, for $v \in S_0$, to $\pi_\ast(z_v^3)$ such that $[k(\theta):k] = \Delta$  
and $k(\theta)/k$ is linearly disjoint from $F/ k$. Following the proof of \cite[Thm. 3.2.1]{Liang}, making the identification $\pi^{-1}(\theta) \cong W_{k(\theta)}$  we also obtain an adelic point $(\mathcal{M}_w)_w \in W_{k(\theta)}(\A_{k(\theta)})$ such that $\sum_{w\mid v}\mathcal{M}_w$ is sufficiently close to $z_v^3$ (and hence to $z^2_v$) for all $v\in S_0$. Hence, $\sum_{w\mid v}\mathcal{M}_w$ has the same image as $z_v$ in $\CH_0(W_{k_v})/n$ for all $v \in S$,  and also
$(\mathcal{M}_w)_w \in W_{k(\theta)}(\A_{k(\theta)})^{\Br W \{c\}}$.

\item Via Lemma \ref{brsplit} and Remark~\ref{rmk1}, we view $(\mathcal{M}_w)_w$ as an adelic point $ ((\mathcal{M}_{1,w})_w,  \dots,  (\mathcal{M}_{r,w})_w) $ in  $X_{1, k(\theta)}(\A_{k(\theta)})^{\Br X_1 \{c\}}\times \dots\times X_{r, k(\theta)}(\A_{k(\theta)})^{\Br X_r\{c\}}$.
Hence, we have an adelic point $(\mathcal{M}_{i,w})_w$ in $X_{i, k(\theta)}(\A_{k(\theta)})^{\Br X_i \{c\} }$ for each $i = 1, \dots , r$.

\item 
Recall that $k(\theta)/k$ is linearly disjoint from $F_{X_i}$ for all $i=1,\dots ,r$ and also $[k(\theta):k]=\Delta$. For $X_i\in \overline{\scrR}_k\cup \defi{K3}_k$, Lemma~\ref{lem:nogrowth} shows that $\Res_{k(\theta)/k}:\Br X_i/\Br_0X_i\to \Br X_{i,k(\theta)}/\Br_0X_{i,k(\theta)}$ is an isomorphism. Furthermore, since $a_i\mid c$, we have $\Br X_i/\Br_0X_i=(\Br X_i/\Br_0X_i)\{c\}$. Therefore, $(\mathcal{M}_{i,w})_w\in X_{i, k(\theta)}(\A_{k(\theta)})^{\Br X_i \{c\} }=X_{i, k(\theta)}(\A_{k(\theta)})^{\Br X_{i,k(\theta)}}$.

By assumption, the Brauer-Manin obstruction is the only obstruction to weak approximation for rational points on $X_{i,k(\theta)}$. Hence, there exists a global point $\mathcal{M}_i\in X_{i, k(\theta)}(k(\theta))$ sufficiently close to $\mathcal{M}_{i,w}$ for all places $w$ of $k(\theta)$ lying above places in $S$.

\item Now suppose that for some $i$ we have $X_i\in \overline{\scrK}_k$. Since $\overline{\scrK}_{k,2}\subset \defi{K3}_k$, if $X_i\in \overline{\scrK}_{k,2}$ then we are in the previous step. If $X_i\in\overline{\scrK}_{k,g}$ for some $g\geq 3$  then, by assumption, $\delta$ is coprime to $d$. Since $\Delta \equiv \delta  \mod  dm n  \delta_P$, this implies that $\Delta$ is coprime to $d$ and Lemma~\ref{lem:nogrowth} gives 
      $X_{i, k(\theta)}(\A_{k(\theta)})^{\Br X_i\{d\} }=X_{i, k(\theta)}(\A_{k(\theta)})^{\Br X_{i,k(\theta)}\{d\} }$. Since $(\calM_{i,w})_w \in X_{i, k(\theta)}(\A_{k(\theta)})^{\Br X_i\{c\} }\subset X_{i, k(\theta)}(\A_{k(\theta)})^{\Br X_i\{d\} }$, we obtain $(\calM_{i,w})_w \in   X_{i, k(\theta)}(\A_{k(\theta)})^{\Br X_{i,k(\theta)}\{d\} }$.
      
      By assumption, the $d$-primary Brauer-Manin obstruction is the only obstruction to weak approximation for rational points on $X_{i,k(\theta)}$. Hence, there exists a global point $\mathcal{M}_i\in X_{i, k(\theta)}(k(\theta))$ sufficiently close to $\mathcal{M}_{i,w}$ for all places $w$ of $k(\theta)$ lying above places in $S$.

\item It follows that the $k(\theta)$-point $\mathcal{M}:=(\mathcal{M}_1,\dots, \mathcal{M}_r)\in W_{k(\theta)}(k(\theta))$  is sufficiently close to $\mathcal{M}_w$ for all places $w$ of $k(\theta)$ lying above places in $S$,  implying  that $\mathcal{M}$ and $z_v$ have the same image in $\CH_0(W_{k_v})/n$ for all $v \in S$.

\item When viewed as a 0-cycle on $W$, $\mathcal{M}$ has degree $\Delta$. Since  $\Delta \equiv \delta \mod dm n \delta_P$, adding a suitable multiple of the degree $\delta_P$ closed point $w_0 = \pr(P)$ to $\mathcal{M}$  yields a global 0-cycle of degree $\delta$ on $W$ with the same image as $z_v$ in $\CH_0(W_{k_v})/n$ for all $v \in S$, as required. \qedhere
\end{enumerate}
\end{proof}

 \begin{rmk}
 Let $K/k$ be a finite extension and let $N\in \Z_{>0}$ be coprime to $\delta$. The proof above shows that Theorem~\ref{thm:mixed} still holds with the weaker assumptions obtained by replacing the phrase `for all finite extensions $l/k$' with `for all finite extensions $l/k$ of degree coprime to $N$ that are linearly disjoint from $K/k$'. The same goes for Theorem~\ref{thm:Kummer2-primary} and Theorem~\ref{thm:mixed2} below. 
 \end{rmk}

\begin{rmk} The product of two or more Kummer varieties is not, in general, a Kummer variety. Similarly, the product of two or more K3 surfaces is not a K3 surface. On the other hand, the product of two or more geometrically rationally connected varieties is again geometrically rationally connected.
\end{rmk}

 \begin{rmk} It is clear that the strategy of the proof of Theorem \ref{thm:mixed} could be used to deal with other (products of) varieties, provided that enough information on the Brauer groups and on how to control them over certain field extensions is known. 
 \end{rmk}

\begin{rmk} \label{rem2} Let $X$ be a Kummer variety over $k$.  By \cite[Thm. 1.7]{CreutzViray-DegBMO}, it follows that $X_l(\A_l)^{\Br X_l} \neq \emptyset$ if and only if $X_l(\A_l)^{\Br X_l\{2\}} \neq \emptyset$, for any finite extension $l/k$.  Hence, for any such $X$,  saying  ``the 2-primary Brauer-Manin obstruction to the Hasse principle is the only one for rational points on $X_l$'' is equivalent to saying ``the Brauer-Manin obstruction to the Hasse principle is the only one for rational points on $X_l$''. 
\end{rmk}

As a special case of Theorem \ref{thm:mixed}, we obtain the following result for the Hasse principle:

\begin{thm}\label{thm:mixed2}
  Let $W:= \prod_{i=1}^r X_i$, where $X_i \in \scrK_{k} \cup \defi{K3}_k \cup \overline{\scrR}_k$ for all $i \in\{ 1, \dots, r\}$. Let $\delta \in \Z$ and assume that $\delta$ is odd if at least one of the $X_i$ is in $\scrK_{k,g}$ with $g \geq 3$.  Suppose that, for all $i \in\{ 1, \dots, r\}$ and all finite extensions $l/k$, the Brauer-Manin obstruction is the only obstruction to the Hasse principle for rational points on $X_{i,l}$. Then 
the Brauer-Manin obstruction is the only obstruction to the Hasse principle for 0-cycles of degree $\delta$ on $W$. 
\end{thm}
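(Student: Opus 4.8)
The plan is to deduce Theorem~\ref{thm:mixed2} from Theorem~\ref{thm:mixed} by choosing the integer $d$ appropriately and then using Remark~\ref{rem2} to pass between the full and $2$-primary Brauer--Manin obstructions for the Kummer factors. First I would recall that a $k$-rational Kummer variety in $\scrK_{k,2}$ is in particular a K3 surface (as noted in the paper), so each factor $X_i$ lies in $\defi{K3}_k\cup\overline{\scrR}_k$ together with, possibly, some genuine Kummer varieties $X_i\in\scrK_{k,g}$ with $g\geq 3$. For those higher-dimensional Kummer factors the hypothesis on $\delta$ in the present theorem is exactly that $\delta$ is odd, matching the hypothesis ``$\gcd(d,\delta)=1$ if at least one $X_i\in\overline{\scrK}_{k,g}$ with $g\geq 3$'' of Theorem~\ref{thm:mixed} once we take $d=2$.

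The key steps, in order, are as follows. (i)~Set $d:=2$. For each Kummer factor $X_i\in\scrK_{k,g}$ (with $g\geq 3$, since the $g=2$ case is absorbed into $\defi{K3}_k$), note that by Remark~\ref{rem2} the hypothesis ``the Brauer--Manin obstruction is the only obstruction to the Hasse principle for rational points on $X_{i,l}$'' is equivalent to ``the $2$-primary Brauer--Manin obstruction is the only obstruction to the Hasse principle for rational points on $X_{i,l}$'', for every finite extension $l/k$. This is precisely hypothesis~(2) of Theorem~\ref{thm:mixed} with $d=2$. (ii)~For each factor $X_i\in\defi{K3}_k\cup\overline{\scrR}_k$, the hypothesis is already hypothesis~(1) of Theorem~\ref{thm:mixed}, verbatim. (iii)~With $d=2$ and $\delta$ odd (when a higher Kummer factor is present), the coprimality hypothesis of Theorem~\ref{thm:mixed} is satisfied, so Theorem~\ref{thm:mixed} applies and yields: the $c$-primary Brauer--Manin obstruction is the only obstruction to the Hasse principle for $0$-cycles of degree $\delta$ on $W$, where $c=\operatorname{lcm}(2,\{a_i\}_i)$ with $a_i=\#(\Br X_i/\Br_0 X_i)$ ranging over the $X_i\in\defi{K3}_k\cup\overline{\scrR}_k$. (iv)~Finally, since $c\mid$ any multiple of $c$ and in particular $\Br W\{c\}\subseteq \Br W$, the $c$-primary Brauer--Manin obstruction being the only one implies a fortiori that the full Brauer--Manin obstruction is the only one for $0$-cycles of degree $\delta$ on $W$: indeed $Z_0^\delta(W_{\A_k})^{\Br W}\subseteq Z_0^\delta(W_{\A_k})^{\Br W\{c\}}$, so nonemptiness of the former forces nonemptiness of the latter, which by Theorem~\ref{thm:mixed} forces $Z_0^\delta(W)\neq\emptyset$. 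Conversely a global $0$-cycle is automatically orthogonal to all of $\Br W$, so the two statements about $0$-cycles are indeed related in the direction we need.

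I do not expect any serious obstacle here: the statement is a specialisation-and-cleanup corollary of the main theorem. The one point requiring a little care is step~(i), making sure that Remark~\ref{rem2} (which rests on \cite[Thm.~1.7]{CreutzViray-DegBMO}) is invoked with the correct direction and at each finite level $l/k$, so that the rational-point hypothesis of the present theorem genuinely matches hypothesis~(2) of Theorem~\ref{thm:mixed}; and the trivial but worth-stating observation in step~(iv) that weakening the obstruction group from $\Br W\{c\}$ to $\Br W$ only strengthens the conclusion. If no higher-dimensional Kummer factor occurs, one may take $d$ to be any positive integer (e.g.\ $d=1$, so the coprimality condition on $\delta$ is vacuous and $\delta$ need not be odd), and the same argument goes through with $c=\operatorname{lcm}(\{a_i\}_i)$.
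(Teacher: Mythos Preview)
Your proposal is correct and follows essentially the same approach as the paper: set $d=2$ in Theorem~\ref{thm:mixed}, invoke \cite[Thm.~1.7]{CreutzViray-DegBMO} (equivalently Remark~\ref{rem2}) to upgrade the full Brauer--Manin hypothesis on the Kummer factors to the required $2$-primary one, and then observe that sufficiency of the $c$-primary obstruction a fortiori gives sufficiency of the full one. The paper's proof compresses all of this into two sentences; your steps (i)--(iv) simply spell out the same deduction, including the harmless absorption of $\scrK_{k,2}$ into $\defi{K3}_k$ that the proof of Theorem~\ref{thm:mixed} itself performs.
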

\begin{proof}
Take $d=2$ in Theorem~\ref{thm:mixed}. By \cite[Thm. 1.7]{CreutzViray-DegBMO}, the assumption that the Brauer-Manin obstruction to the Hasse principle is the only one for rational points on $X_l\in\scrK_l$  implies that the $2$-primary Brauer-Manin obstruction to the Hasse principle is the only one for rational points on $X_l$.
\end{proof}

\begin{rmk}
In Theorem~\ref{thm:mixed2}, we cannot include geometrically Kummer varieties of dimension greater than $2$ unless they are Kummer varieties over $k$ because \cite[Thm. 1.7]{CreutzViray-DegBMO} only applies to Kummer varieties, not to the larger class of geometrically Kummer varieties. However, geometrically Kummer surfaces are included because they are K3 surfaces.
\end{rmk}

\begin{proof}[Proof of Theorem~\ref{thm:Kummer2-primary}]
This is an immediate consequence of \cite[Thm. 1.7]{CreutzViray-DegBMO} and the proof of Theorem~\ref{thm:mixed} applied with $d=2$. 
\end{proof}

In \cite{Skorobogatov-K3Conj}, Skorobogatov made the following conjecture:

\begin{conj} [Skorobogatov] \label{conjsko} The Brauer-Manin obstruction is the only obstruction to the Hasse
principle and weak approximation on K3 surfaces over number fields.
\end{conj}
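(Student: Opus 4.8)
\emph{This statement is Skorobogatov's conjecture,} and it is one of the central open problems in the arithmetic of surfaces; I do not expect to prove it outright, so what follows describes the strategy by which the known cases are established and isolates the obstruction that blocks a general argument. The natural first reduction is to record finiteness of the relevant Brauer group: by \cite{SZ-K3finite}, $\Br X/\Br_0 X$ is finite for every K3 surface $X$ over a number field, so the Brauer-Manin set $X(\A_k)^{\Br X}$ is cut out by finitely many congruence conditions and, for each fixed $X$, the problem is a priori a finite computation. What remains is to show that $X(\A_k)^{\Br X}\neq\emptyset$ forces $X(k)\neq\emptyset$, and moreover that $X(k)$ is dense in $X(\A_k)^{\Br X}$.

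The only systematically successful line of attack is available when $X$ carries extra structure, and here the philosophy of the present paper is exactly the right one: for a Kummer surface $X=\Kum(A)$ arising from a $2$-covering $V\to A$, one transfers the problem to the underlying abelian surface. The plan would be to combine the identification $\Br\overline{X}=\Br\overline{A}$ of \cite[Proposition~2.7]{SZ-KumVar} with descent along the $2$-covering to rephrase the existence of a $k$-point on $X$ in terms of a $k$-point on a torsor under $A$, and then to deploy the arithmetic of the abelian variety --- Selmer groups, the Cassels-Tate pairing, and the conjectural finiteness of the Tate-Shafarevich group --- to construct that point. This is the route of \cite{HarpazSkoro} and \cite{SZ-KumVar}, and it does yield the conjecture for broad classes of Kummer surfaces, but always conditionally, typically on finiteness of the Tate-Shafarevich group of $A$ or on a nondegeneracy hypothesis for a $2$-descent.

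For a general K3 surface, with no abelian variety in sight, the remaining tool is descent theory: one would cut out $X(\A_k)^{\Br X}$ by torsors under groups of multiplicative type, pass to the resulting twists, and hope to land on varieties where a fibration argument applies. The decisive difficulty --- and the reason the conjecture is open --- is that there is no known mechanism for producing a rational point out of a nonempty Brauer-Manin set in the absence of a dominant map to a curve or to an abelian variety with controllable Mordell-Weil and Tate-Shafarevich groups. The transcendental part $\Br X/\Br_1 X$, whose integral structure is genuinely large and difficult to compute, resists the purely cohomological manipulations that settle the geometrically rationally connected case treated in \cite{Liang}. I expect the main obstacle to be exactly this step --- converting an abstract obstruction-theoretic statement into the construction of global points --- and it is harder still for the weak approximation assertion, which demands control of the adelic closure of $X(k)$ rather than the mere detection of a single point.
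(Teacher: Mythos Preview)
This statement is a conjecture, not a theorem, and the paper does not attempt to prove it: it is simply recorded and then invoked as a hypothesis in Corollary~\ref{cor:conditional} and Corollary~\ref{cor:mainK3R}. You correctly identify it as open and do not claim a proof, so in that sense your response matches the paper exactly. Your surrounding discussion of the known conditional approaches (via Kummer structure, descent, and the Tate--Shafarevich group) is accurate and well-contextualised, but it goes well beyond what the paper does here, which is merely to state the conjecture and cite \cite{Skorobogatov-K3Conj}.
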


\begin{proof}[Proof of Corollary~\ref{cor:conditional}]
Run the proof of Theorem~\ref{thm:mixed} for $d=2$, taking into account Remark~\ref{rem2}.
\end{proof}

 It is natural to ask the following question:
 \begin{question}\label{q} Is the Brauer-Manin obstruction the only obstruction to the Hasse principle for the class $\scrK_{k}$ of Kummer varieties over a number field $k$? Or indeed for the class $\overline{\scrK}_{k}$ of geometrically Kummer varieties over a number field $k$? Is the Brauer-Manin obstruction the only obstruction to weak approximation for either of these classes?
 \end{question}

By \cite[Thm. 1.7]{CreutzViray-DegBMO}, the first part of Question~\ref{q} is equivalent to asking whether the $2$-primary Brauer-Manin obstruction is the only obstruction to the Hasse principle for the class $\scrK_{k}$ of Kummer varieties over a number field $k$.

As a corollary of Theorem \ref{thm:mixed2}, we obtain a result that is conditional on Conjecture~\ref{conjsko}, a positive answer to the first part of our Question~\ref{q}, and the following conjecture of Colliot-Th\'{e}l\`{e}ne (stated in full generality in \cite[p. 174]{CT03}):

\begin{conj}[Colliot-Th\'{e}l\`{e}ne]\label{conjrat}
The Brauer-Manin obstruction is the only obstruction to the Hasse principle for rational points for the class $\overline{\scrR}_k$ of all smooth, projective, geometrically integral, geometrically rationally connected varieties over a number field $k$.
\end{conj}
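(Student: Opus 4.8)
The statement to be established is the Colliot-Th\'el\`ene conjecture, one of the central open problems in the arithmetic of rational points; accordingly, what follows is a description of the program by which it is approached rather than a complete argument, with an indication of where the real difficulty lies.

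The plan is to proceed by induction on $\dim X$, combining the \emph{fibration} and \emph{descent} methods. For the inductive step, one first replaces $X$ by a smooth, proper birational model admitting a dominant morphism $f\colon X\to \PP^1_k$ (or to a geometrically rationally connected variety of strictly smaller dimension) whose generic fibre is again smooth, proper and geometrically rationally connected; such a model exists because a geometrically rationally connected variety of dimension $\geq 2$ is swept out by rational curves and can be fibred. Given an adelic point $(x_v)_v\in X(\A_k)$ orthogonal to $\Br X$, the fibration method (Colliot-Th\'el\`ene--Sansuc, Colliot-Th\'el\`ene--Skorobogatov--Swinnerton-Dyer, Harari, Harpaz--Wittenberg) would be used to produce a closed point $\theta$ on $\PP^1_k$, arbitrarily close to $f(x_v)$ at finitely many places, such that the fibre $X_\theta$ has points in every completion of $k(\theta)$ and still carries an adelic point orthogonal to $\Br X_\theta$; applying the inductive hypothesis to $X_\theta$ then produces a rational point on the fibre, and a moving/specialisation argument descends it appropriately.

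For the base of the induction one must treat geometrically rationally connected surfaces. Here the plan is to invoke the descent formalism of Colliot-Th\'el\`ene--Sansuc: $X(\A_k)^{\Br X}$ equals the union of the images of the $Y(\A_k)$ over torsors $Y\to X$ under groups of multiplicative type (or under $\SL_n$), so it suffices to prove the Hasse principle for these torsors. For conic bundles over $\PP^1_k$, del Pezzo surfaces of degree $\geq 1$, Ch\^atelet surfaces, and smooth compactifications of homogeneous spaces of connected linear algebraic groups, one reduces in this way to the solubility of systems of norm equations or to conics fibred over the line, which can be handled unconditionally using analytic input (the large sieve; the work of Browning--Matthiesen and Harpaz--Wittenberg on norm forms).

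The hard part --- and the reason this remains a conjecture --- is the fibration step in full generality: one needs to control $\Br X$ (and, more delicately, the transcendental classes as well as the ``vertical'' classes coming from the base) in terms of the geometry of $f$, and to locate a fibre $X_\theta$ whose everywhere-local solubility is preserved. Unconditionally this is known only under restrictive hypotheses (for instance, all non-split fibres of $f$ lying over $k$-rational points of $\PP^1_k$) or using deep analytic number theory (Schinzel-type hypotheses, additive combinatorics over number fields). Absent such hypotheses the induction does not close, so the program can currently be carried through only in new special cases rather than in the stated generality.
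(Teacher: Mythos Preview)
The statement you were asked to establish is labeled in the paper as a \emph{conjecture} (Colliot-Th\'el\`ene's conjecture), not a theorem; the paper offers no proof of it whatsoever, but merely states it and then uses it as a hypothesis in a conditional corollary. So there is no ``paper's own proof'' against which to compare your attempt.

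You correctly recognised that the statement is open and gave a reasonable sketch of the fibration-and-descent program by which it is attacked, together with an honest account of where the argument currently fails to close. That is the appropriate response here: the conjecture is genuinely unproved, and any purported complete proof would be wrong. One could quibble with details of your outline --- for instance, not every geometrically rationally connected variety of dimension $\geq 2$ is known to admit a fibration over $\PP^1$ with rationally connected generic fibre in a way that makes the induction go through, and the base case of rationally connected surfaces is itself not fully settled (del Pezzo surfaces of degree $1$ and $2$ remain open) --- but since the paper claims nothing here, there is nothing substantive to correct.
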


 \begin{cor} \label{cor:mainK3R}
 Let $k$ be a number field. Let $X_i \in {\scrK}_k \cup \defi{K3}_k\cup \overline{\scrR}_k$ for all $i = 1, ..., r$  and write $W:= \prod_{i=1}^r X_i$. Let $\delta \in \Z$ and assume that $\delta$ is odd if at least one of the $X_i$ is in $\scrK_{k,g}$ with $g \geq 3$.  If Conjectures \ref{conjsko} and \ref{conjrat} are true and if the first part of our Question \ref{q} has a positive answer, then the Brauer-Manin obstruction is the only obstruction to the Hasse principle for 0-cycles of degree $\delta$ on $W$. 
 \end{cor}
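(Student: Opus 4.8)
The plan is to deduce Corollary~\ref{cor:mainK3R} directly from Theorem~\ref{thm:mixed2} by checking that, under the three hypotheses, the assumption ``for all $i$ and all finite extensions $l/k$, the Brauer--Manin obstruction is the only obstruction to the Hasse principle for rational points on $X_{i,l}$'' is satisfied. The key observation is that each of the classes $\scrK_k$, $\defi{K3}_k$, $\overline{\scrR}_k$ is stable under finite base extension: if $X_i\in\defi{K3}_k$ then $X_{i,l}\in\defi{K3}_l$ (being a K3 surface is a geometric property), if $X_i\in\overline{\scrR}_k$ then $X_{i,l}\in\overline{\scrR}_l$ (geometric rational connectedness is a geometric property), and if $X_i\in\scrK_{k}$ then $X_{i,l}\in\scrK_l$ (the construction of a Kummer variety commutes with base change, since it is built from an abelian variety, a torsor under its $2$-torsion, a blow-up and a quotient, all of which base-change well).

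With that stability in hand, I would argue as follows. First, for $X_i\in\overline{\scrR}_k$: for any finite extension $l/k$, $X_{i,l}$ is again in $\overline{\scrR}_l$, and $l$ is again a number field, so Conjecture~\ref{conjrat} (applied over the number field $l$) gives that the Brauer--Manin obstruction is the only obstruction to the Hasse principle for rational points on $X_{i,l}$. Second, for $X_i\in\defi{K3}_k$: for any finite extension $l/k$, $X_{i,l}\in\defi{K3}_l$, so Conjecture~\ref{conjsko} over $l$ gives the same conclusion. Third, for $X_i\in\scrK_k$: for any finite extension $l/k$, $X_{i,l}\in\scrK_l$, so a positive answer to the first part of Question~\ref{q} over the number field $l$ gives that the Brauer--Manin obstruction is the only obstruction to the Hasse principle for rational points on $X_{i,l}$. (One should note that a positive answer to Question~\ref{q} over $k$ formally means the statement holds for the class $\scrK_k$; since $\scrK_l$ is itself a class of Kummer varieties over a number field, the hypothesis as applied to every number field supplies exactly what is needed. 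Alternatively, via \cite[Thm.~1.7]{CreutzViray-DegBMO}, one may phrase everything in terms of the $2$-primary obstruction.)

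Having verified the hypothesis of Theorem~\ref{thm:mixed2} for all $i$ and all finite $l/k$, I would simply invoke Theorem~\ref{thm:mixed2} to conclude that the Brauer--Manin obstruction is the only obstruction to the Hasse principle for $0$-cycles of degree $\delta$ on $W=\prod_{i=1}^r X_i$, where the parity hypothesis on $\delta$ (odd if some $X_i\in\scrK_{k,g}$ with $g\geq 3$) is carried over verbatim from that theorem.

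The proof is essentially formal once the base-change stability of the three classes is recorded, so I do not expect a serious obstacle; the only subtlety to flag is the logical shape of ``a positive answer to Question~\ref{q}'' — one must read it as asserting the statement over every number field (equivalently, observe that the conjectural input is needed only for Kummer varieties arising as base changes $X_{i,l}$, which are themselves Kummer varieties over the number field $l$), so that it can legitimately be fed into the ``all finite extensions $l/k$'' hypothesis of Theorem~\ref{thm:mixed2}. This is the same pattern by which Conjectures~\ref{conjsko} and \ref{conjrat}, stated ``over number fields,'' are used.
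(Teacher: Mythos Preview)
Your proposal is correct and matches the paper's approach: the paper does not give an explicit proof of Corollary~\ref{cor:mainK3R} but presents it explicitly as ``a corollary of Theorem~\ref{thm:mixed2}'' conditional on the stated conjectures, which is precisely what you verify. Your discussion of base-change stability of the three classes and of how the conjectural hypotheses over arbitrary number fields feed into the ``all finite extensions $l/k$'' assumption simply makes explicit what the paper leaves to the reader.
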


\section{Transferring emptiness of the Brauer-Manin set over field extensions
}\label{sec:transfer}
Let $k$ be a number field.
In the previous section, we proved that the Brauer-Manin obstruction to the Hasse principle is the only one for 0-cycles on certain varieties $X/k$, assuming that the Brauer-Manin obstruction to the Hasse principle on $X_l$ is the only one for rational points for sufficiently many finite extensions $l/k$. One wonders whether it is enough to simply assume that the Brauer-Manin obstruction to the Hasse principle is the only one for rational points on $X$. This leads one to ask when sufficiency of the Brauer-Manin obstruction can be transferred over field extensions, which is the topic of Theorem~\ref{thm:transfer}, proved below. Similar ideas have appeared in  \cite{riman}.

\begin{proof}[Proof of Theorem~\ref{thm:transfer}] Let $\alpha \in \Br X \{2\}$ be a representative for $[\alpha] \in \Br X/ \Br_0 X$. Let $\calX$ be an integral model for $X$ over $\Spec \calO_{S}$, where $S \subset \Omega_k$ is some finite subset containing the archimedean places and such that $\alpha$ comes from an element in $\Br\calX\{2\}$. 
Since $\Br \calO_v = 0$ for all $v \not \in S$, we have $\langle x_v, \alpha \rangle_{\textrm{BM}} = 0$ for any $x_v \in X(k_v)$ and $v \not\in S$. 

Let $l/k$ be a finite extension of odd degree which is completely split at all places in $S$. Suppose that  $ X(\A_l)^{\alpha_l} \neq \emptyset$, where $\alpha_l:=\Res_{l/k}\alpha$.
Let $(x_w)_w \in X(\A_l)^{\alpha_l}$. Since $\inv_w (\alpha_l(x_w)) = 0$ for any place $w$ of $l$ lying above a place $v$ of $k$ with $v\notin S$, we have 
\[0= \sum_{w \in \Omega_l } \inv_w(\alpha_l(x_w)) =  \sum_{v \in S} \sum_{w|v} \inv_w(\alpha_l(x_w)).\]
Since $l/k$ is completely split at all places in $S$, for each place $w|v$ with $v \in S$ we have \mbox{$[l_w:k_v]=1$} and hence there exists some $u^{(w)}_v \in X(k_v)$ with $\inv_w(\alpha_l(x_w)) = \inv_v(\alpha(u_v^{(w)}))$.
It follows that
\begin{equation}\label{Av} \sum_{v \in S} \sum_{w|v} \inv_w(\alpha_l(x_w)) = \sum_{v \in S} \sum_{w|v}  \inv_v(\alpha(u^{(w)}_v)) = 0.\end{equation}
We now construct an adelic point $(x'_v)_v \in X(\A_k)^{\alpha}$. By assumption, $X(\A_k) \neq \emptyset$, so let $(x''_v)_v \in X(\A_k)$. Let $x'_v := x''_v$ for all $v \not\in S$. For $v \in S$, we proceed as follows. Define
\[ A_v:=  \sum_{w|v}  \inv_v(\alpha(u^{(w)}_v)).\]
If $A_v = 0$, then there exists some $w | v$ such that $\inv_v( \alpha(u_v^{(w)})) = 0$, since $\alpha$ has order $2$ in $\Br X/\Br_0 X$ and $\# \{ w| v\} = [l:k]$ is odd. 
In this case, let $x'_v := u_v^{(w)}$.
Note that, from \ref{Av},  there is an even number (possibly zero) of places $v \in S$ such that $A_v = 1/2$; let $\calV$ be the set of all such $v \in S$. If $\calV = \emptyset$, then $A_v = 0$ for all $v \in S$. This means that we have already constructed an adelic point $(x'_v)_v \in X(\A_k)$ with the property that $(x'_v)_v \in X(\A_k)^\alpha$.
So let us suppose that $\calV \neq \emptyset$ and pair the places in $\calV$, writing $\calV = \{v_1, v_1', v_2, v'_2, ..., v_r, v'_r\}$. For $i = 1, ..., r$, we do the following. Since $A_{v_i} = 1/2$ and  $A_{v'_i} = 1/2$, we can pick some  $u^w_{v_i} \in X(k_{v_i})$ and some  $u^{w'}_{v'_i} \in X(k_{v'_i})$ such that $\inv_{v_i} (\alpha(u^w_{v_i} )) = \inv_{v'_i} (\alpha(u^{w'}_{v'_i}))  = 1/2$. We then set $x'_{v_i} := u^w_{v_i}$ and $x'_{v'_i}:= u^{w'}_{v'_i}$. At the end of this process, we obtain  an adelic point $(x'_v)_v \in X(\A_k)$ which, by construction, has the property that $(x'_v)_v \in X(\A_k)^\alpha$. 

Hence, for any finite extension $l/k$ of odd degree which is completely split at all places in $S$,  we have shown that $ X(\A_l)^{\alpha_l} \neq \emptyset$ implies that $ X(\A_k)^{\alpha} \neq \emptyset$. 
The fact that there exist uncountably many such field extensions $l/k$ follows from e.g. the proof of \cite[Thm. 4.1]{MazurRubin-DioStab}. 
\end{proof}

\begin{proof}[Proof of Corollary~\ref{thm:liftsuff}]
This is a special case of Theorem~\ref{thm:transfer} in conjunction with \cite[Thm. 1.7]{CreutzViray-DegBMO}.
\end{proof}

\bibliographystyle{amsalpha}
\bibliography{bibshort}

\end{document}